\newtheoremstyle{example}{\medskipamount}{\medskipamount}{\normalfont}{}{\scshape}{.}{1em}{}
\theoremstyle{example}
\newtheorem{example}{Example}
\newtheorem{thm}{Theorem}[section]
\newtheorem{lem}[thm]{Lemma}
\newtheorem{prop}[thm]{Proposition}
\newtheorem{cor}[thm]{Corollary}
\newtheorem{rem}[thm]{Remark}
\newcommand{\FF}{{\mathbb F}}
\newcommand{\NN}{{\mathbb N}}
\newcommand{\RR}{{\mathbb R}}
\newcommand{\UU}{{\mathbb U}}
\newcommand{\WW}{{\mathbb W}}
\newcommand{\ZZ}{{\mathbb Z}}
\newcommand{\mc}[1]{\mathcal {#1}}
\newcommand{\eps}{\epsilon}
\newcommand{\argmax}{\mathrm{argmax}}
\newcommand{\rear}{\mathrm{rear}}
\newcommand{\gren}{\mathrm{gren}}
\newcommand{\cov}{\mathrm{cov}}
\newcommand{\bb}[1]{\mathbb {#1}}
\newcommand{\limsupp}{{\overline{\mathrm{lim}}}}
\begin{document}

\title{Estimation of a Discrete Monotone Distribution}
\author{Hanna K. Jankowski and Jon A. Wellner}
\date{\today}
\maketitle

\begin{abstract}
We study and compare three estimators of a discrete monotone distribution:
(a) the (raw) empirical estimator;
(b) the ``method of rearrangements'' estimator; and
(c) the maximum likelihood estimator.
We show that the maximum likelihood estimator strictly dominates both the rearrangement
and empirical estimators in cases when the distribution has intervals of constancy.  For example, when the distribution is 
uniform on $\{ 0, \ldots , y \}$, the asymptotic risk of the 
method of rearrangements estimator (in squared $\ell_2$ norm)
 is $y/(y+1)$, while the asymptotic risk of the MLE is of order $(\log y)/(y+1)$.  For strictly decreasing distributions, the estimators are asymptotically equivalent.


\end{abstract}


\section{Introduction}\label{sec:Intro}

This paper is motivated in large part by the recent surge of acitivity concerning 
``method of rearrangement'' estimators for nonparametric estimation of monotone functions:  
see, for example, \cite{MR1486918},  \cite{MR2225147},  \cite{MR2232727},  
\cite{CFG} and \cite{anevski:fougeres:07}. 
Most of these authors study continuous settings and often start with a 
kernel type estimator of the density, which involves choices of a kernel 
and of a bandwidth.  Our goal here is to investigate method of rearrangement 
estimators and compare them to natural alternatives (including the maximum 
likelihood estimators with and without the assumption of monotonicity) in a 
setting in which there is less ambiguity in the choice of an initial or ``basic'' estimator, 
namely the setting of estimation of a monotone decreasing mass function on the 
non-negative integers $\NN = \{0,1,2, \ldots \}$.

Suppose that $p = \{ p_x \}_{x \in \NN}$ is a probability mass function; i.e.
$p_x \ge 0$ for all $x \in \NN$ and $\sum_{x \in \NN} p_x = 1$.
Our primary interest here is in the situation in which $p$ is
monotone decreasing:  $p_{x} \geq p_{x+1}$ for all $ x \in \NN$.
The three estimators of $p$ we study are:
\newcounter{count14}
\begin{list}{(\alph{count14}).}
        {\usecounter{count14}
        \setlength{\topsep}{6pt}
        \setlength{\parskip}{0pt}
        \setlength{\partopsep}{0pt}
        \setlength{\parsep}{0pt}
        \setlength{\itemsep}{3pt}
        \setlength{\leftmargin}{40pt}}
\item the (raw) empirical estimator,
\item the method of rearrangement estimator,
\item the maximum likelihood estimator.
\end{list}
Notice that the empirical estimator is also the maximum likelihood estimator 
when no shape assumption is made on the true probability mass function.

Much as in the continuous case our considerations here carry over to the case of estimation of unimodal mass functions
with a known (fixed) mode; see e.g. \cite{MR1486918}, \cite{MR902241}, and \cite{MR1246082}.  
For two recent papers discussing connections and trade-offs between discrete and continuous
models in a related problem involving nonparametric estimation of a monotone function,
see \cite{BanerjeeKosorokTang:09} and \cite{MaathuisHudgens:09}.
  
Distributions from the monotone decreasing family satisfy 
$\Delta p_x \equiv p_{x+1} - p_x \le 0$ for all $x \in \NN$, and may be written as mixtures of uniform mass functions
\begin{eqnarray}\label{line:mix_form}
p_x = \sum_{y\geq 0} \frac{1}{y+1} 1_{\{0, \ldots, y\}}(x) \ q_y.
\end{eqnarray}
Here, the mixing distribution $q$ may be recovered via
\begin{eqnarray}\label{line:mix_recover}
q_x = - (x+1) \Delta p_x,
\end{eqnarray}
for any $x\in \NN.$

\begin{rem}\label{rem:p_k_bound}
From the form of the mass function, it follows that $p_x \leq 1/(x+1)$ for all~$x\geq 0.$
\end{rem}

Suppose then that we observe $X_1 , X_2, \ldots , X_n$ i.i.d. random variables with values in
$\NN$ and with a monotone decreasing mass function $p$.  For $x\in\NN$, let
\begin{eqnarray*}
\widehat{p}_{n,x} \equiv n^{-1} \sum_{i=1}^n 1_{\{ x \}} (X_i ) 
\end{eqnarray*}
denote the (unconstrained) empirical estimator of the probabilities $p_x$.  
Clearly, there is  no guarantee that this estimator will also be monotone decreasing, 
especially for small sample size.  We next consider two estimators 
which do satisfy this property: the rearrangement estimator and the maximum likelihood estimator (MLE).

For a vector $w=\{w_0, \ldots, w_k\}$, let $\rear(w)$ denote the 
reverse-ordered vector such that $w'=\rear(w)$ satisfies 
$w'_0\geq w'_1 \geq \ldots \geq w'_k.$  The rearrangement estimator is then simply defined as
\begin{eqnarray*}
\widehat{p}_{n}^{R} = \rear(\widehat p_n).
\end{eqnarray*}
We can also write $\widehat{p}_{n,x}^{R} = \sup \{ u : Q_n (u) \le x \}$, 
where $Q_n (u) \equiv \# \{ x : \ \widehat{p}_{n,x} \ge u \}.$

To define the MLE we again need some additional notation.  
For a vector $w=\{w_0, \ldots, w_k\}$, let $\gren(w)$ be the operator which 
returns the vector of the $k+1$ slopes of the least concave majorant of the points
\begin{eqnarray*}
\left\{\left(j, \sum_{j=0}^j w_i\right): j=-1, 0, \ldots, k \right\}.
\end{eqnarray*}
Here, we assume that $\sum_{j=0}^{-1}w_j=0.$    
The MLE, also known as the Grenander estimator, is then defined as
\begin{eqnarray*}
\widehat p_n^G = \gren(\widehat p_n).
\end{eqnarray*}
Thus, $\widehat p^G_{n,x}$ is the left derivative at $x$ of the 
least concave majorant (LCM) of the empirical distribution function 
$\FF_n(x)=n^{-1} \sum_{i=1}^n 1_{[0,x]} (X_i)$ (where we include the point $(-1,0)$ to find the left derivative at $x=0$).  
Therefore, by definition, the MLE is a vector of local averages over a partition of 
$\{0, \ldots, \max\{X_1, \ldots, X_n\}\}$.  
This partition is chosen by the touchpoints of the LCM with $\FF_n$.   
It is easily checked that $\widehat p^G_n$ corresponds to the isotonic estimator 
for multinomial data as described in \cite{MR961262}, pages 7--8 and 38--39.

We begin our discussion with two examples: in the first, $p$ is the uniform 
distribution, and in the second $p$ is strictly monotone decreasing.  
To compare the three estimators, we consider several metrics: 
the $\ell_k$ norm for $1\leq k\leq \infty$ and the Hellinger distance.  
Recall that the Hellinger distance between two mass functions is given by
\begin{eqnarray*}
H^2(p,\tilde p) = 2^{-1} \int [ \sqrt{p} - \sqrt{\tilde p} ]^2 d \mu = 2^{-1} \sum_{x\geq0} [ \sqrt{p_x} - \sqrt{\tilde p_x} ]^2,
\end{eqnarray*}
while the $\ell_k$ metrics are defined as
\begin{eqnarray*}
||p-\tilde p||_k&=& \left\{
             \begin{array}{ll}
             \left(\sum_{x\geq 0} |p_x-\tilde p_x|^k\right)^{1/k} & 1\leq k <\infty,\\
             \sup_{x\geq0} |p_x-\tilde p_x|& k=\infty.
             \end{array}\right.
\end{eqnarray*}
In the examples, we compare the Hellinger norm and the $\ell_1$ and $\ell_2$ metrics, as the behaviour of these differs the most.

\begin{figure}[htb!]
\centering
\includegraphics[width=0.45\textwidth]{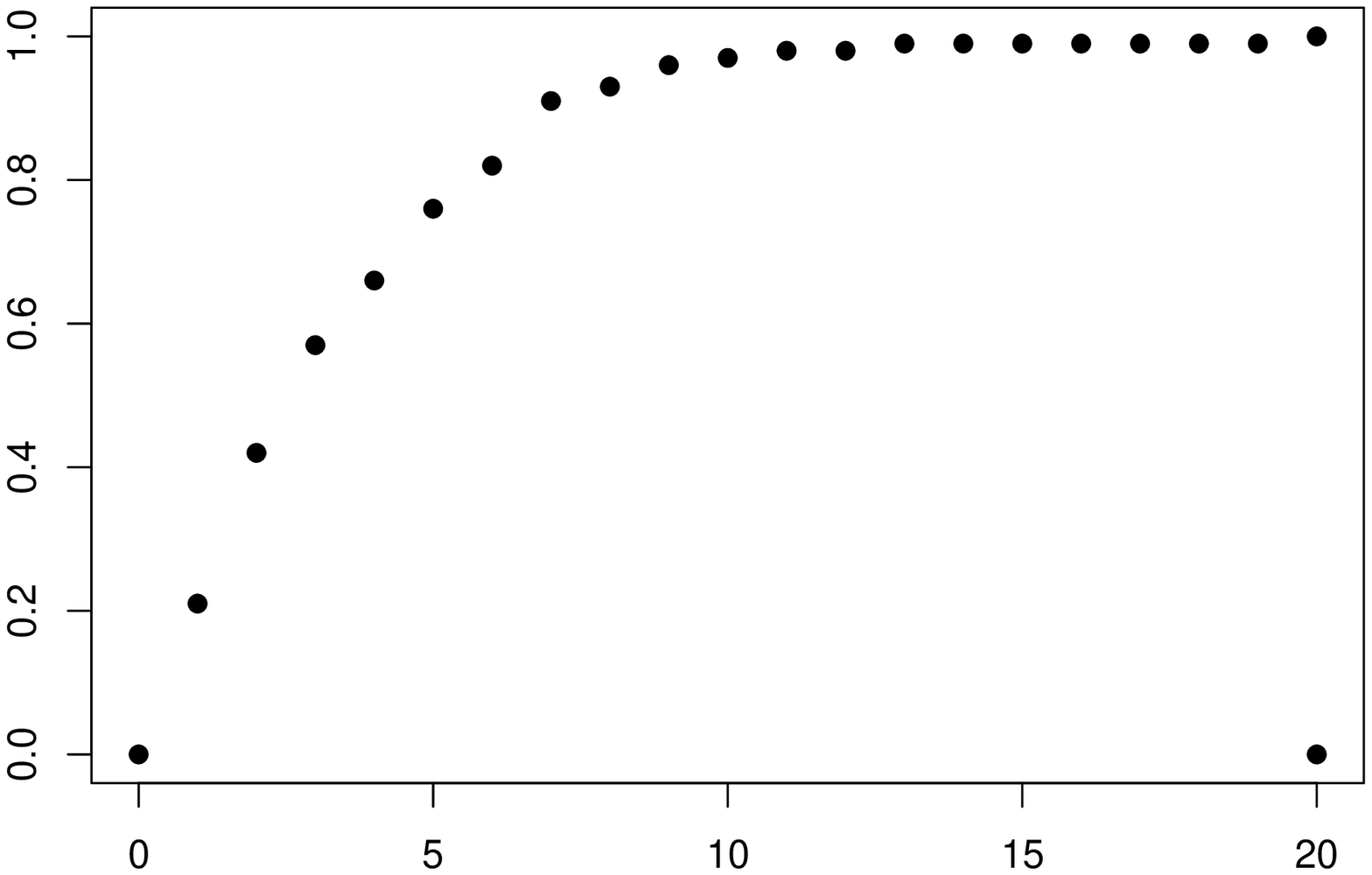}
\includegraphics[width=0.45\textwidth]{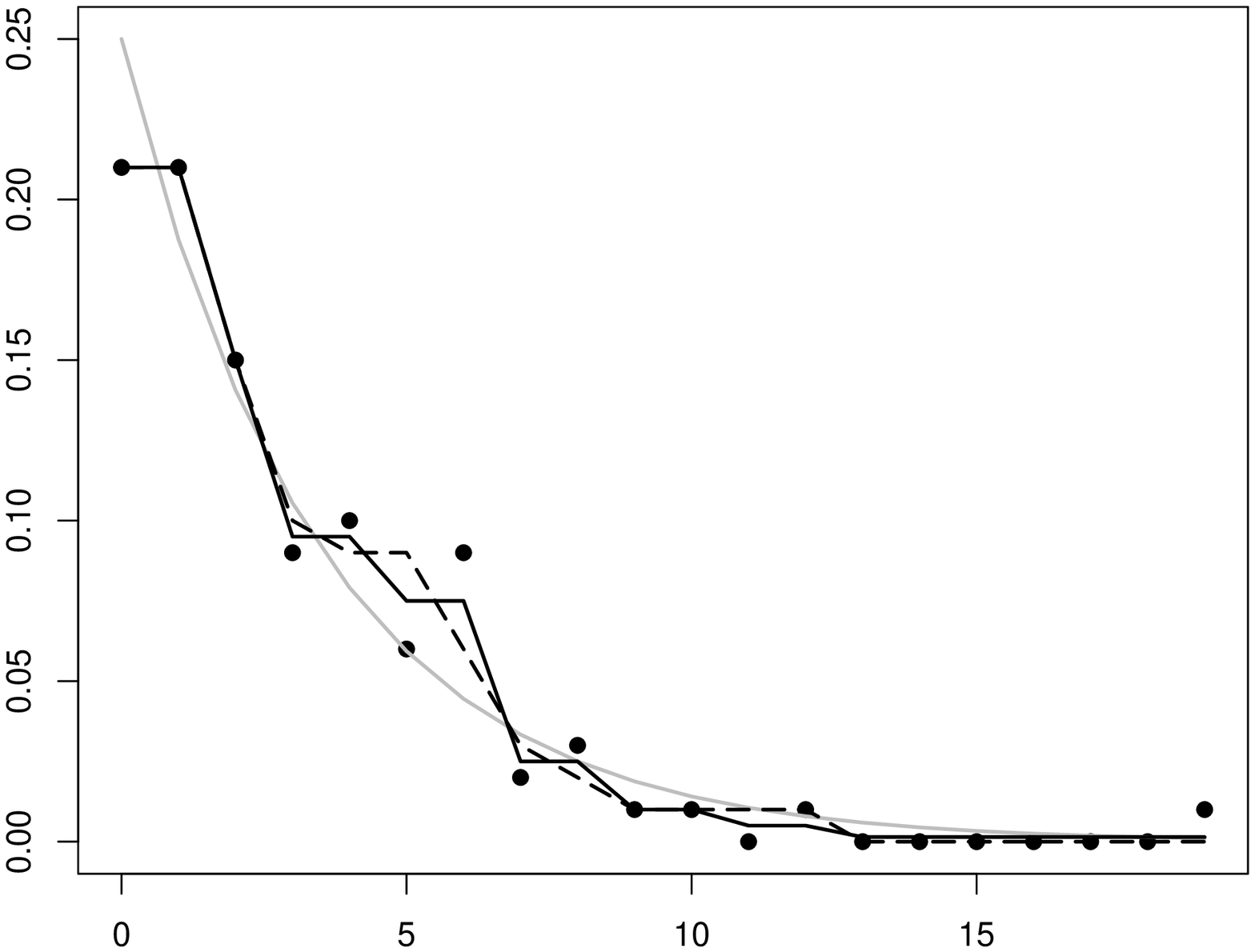}
\caption{Illustration of MLE and monotone rearrangement estimators:
empirical proportions (black dots), monotone rearrangement estimator (dashed line),
MLE (solid line), and the true mass function (grey line).
Left: the true distribution is the discrete uniform; and right: the true distribution
is the geometric distribution with $\theta=0.75$.
In both cases a sample size of $n=100$ was observed.}
\label{fig:comparisons}
\end{figure}

\begin{example}\label{eg:1}
\label{ExampleOneSampleFromUniformZeroToFive}
Suppose that $p$ is the uniform distribution on $\{ 0, \ldots , 5 \}$.
For $n=100$ independent draws from this
distribution we observe $\widehat{p}_{n} =  (0.20,0.14,0.11,0.22,0.15,0.18)$.
Then $\widehat{p}_{n}^R =  (0.22,0.20,0.18,0.15,0.14,0.11)$, and the MLE
may be calculated as $\widehat{p}_{n}^G =  (0.20,0.16,0.16,0.16,0.16,0.16)$.
The estimators are illustrated in Figure \ref{fig:comparisons} (left).
The distances of the estimators from the true mass function $p$ are
 given in Table \ref{tab:eg1} (left).  The maximum likelihood estimator
 $\widehat p_n^G$ is superior in all three metrics shown.
 To explore this relationship further, we repeated the estimation procedure
 for 1000 Monte Carlo samples of size $n=100$ from the uniform distribution.
 Figure \ref{fig:comparisons_box} (left) shows boxplots of the metrics for
 the three estimators.  The figure shows that here the rearrangement
 and empirical estimators have the same behaviour; a relationship which
 we establish rigorously in Theorem~\ref{thm:BasicInequalities}.
\end{example}

\begin{table}[htb!]
\caption{Distances between true $p$ and estimators}
\medskip
\centering
\begin{tabular}{lcccccc}
\toprule[1.5pt]
& \multicolumn{3}{c}{Example \ref{ExampleOneSampleFromUniformZeroToFive}}& \multicolumn{3}{c}{Example \ref{ExampleGeometricSSOneHundred}}\\\cmidrule(l){2-4}\cmidrule(l){5-7}
& $H(\tilde p,p)$ & $||\tilde p-p||_2$ & $||\tilde p-p||_1$ & $H(\tilde p,p)$ & $||\tilde p-p||_2$ & $||\tilde p-p||_1$ \\
\midrule
$\tilde p=\widehat p_n$     & 0.08043  & 0.09129  & 0.2     & 0.1641  & 0.07425  & 0.2299 \\
$\tilde p=\widehat p_n^R$   & 0.08043  & 0.09129  & 0.2     & 0.1290  & 0.06115  & 0.1821 \\
$\tilde p=\widehat p_n^G$   & 0.03048  & 0.03651  & 0.06667 & 0.09553 & 0.06302  & 0.1887 \\
\bottomrule[1.5pt]
\end{tabular}
\label{tab:eg1}
\end{table}

\begin{example}
\label{ExampleGeometricSSOneHundred}
Suppose that $p$ is the geometric distribution with
$p_x = (1-\theta) \theta^x$ for $x \in \NN$ and with $\theta = 0.75$.
For $n=100$ draws from this distribution we observe
$\widehat p_n, \widehat p_n^R$ and $\widehat p_n^G$ as shown in
Figure \ref{fig:comparisons} (right).  The distances of the estimators
from the true mass function $p$ are given in Table~\ref{tab:eg1} (right).
Here, $\widehat p_n$ is outperformed by $\widehat p_n^G$ and
$\widehat p_n^R$ in all the metrics, with $\widehat p_n^R$ performing
better in the $\ell_1$ and $\ell_2$ metrics, but not in the Hellinger distance.
These relationships appear to hold true in general, see Figure
\ref{fig:comparisons_box} (left) for boxplots of the metrics obtained
through Monte Carlo simulation.
\end{example}

\begin{figure}[htb!]
\centering
\includegraphics[width=0.45\textwidth]{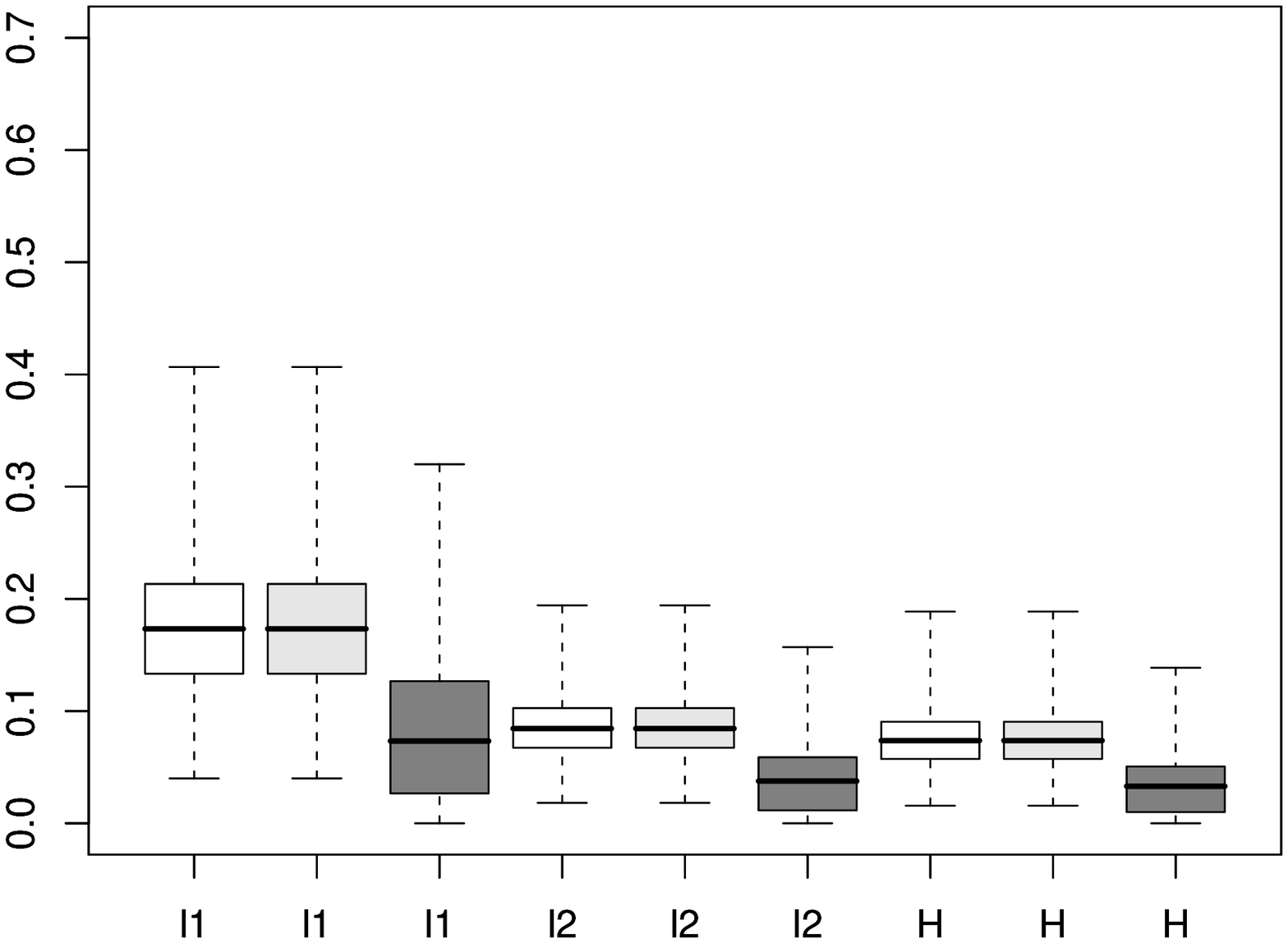}
\includegraphics[width=0.45\textwidth]{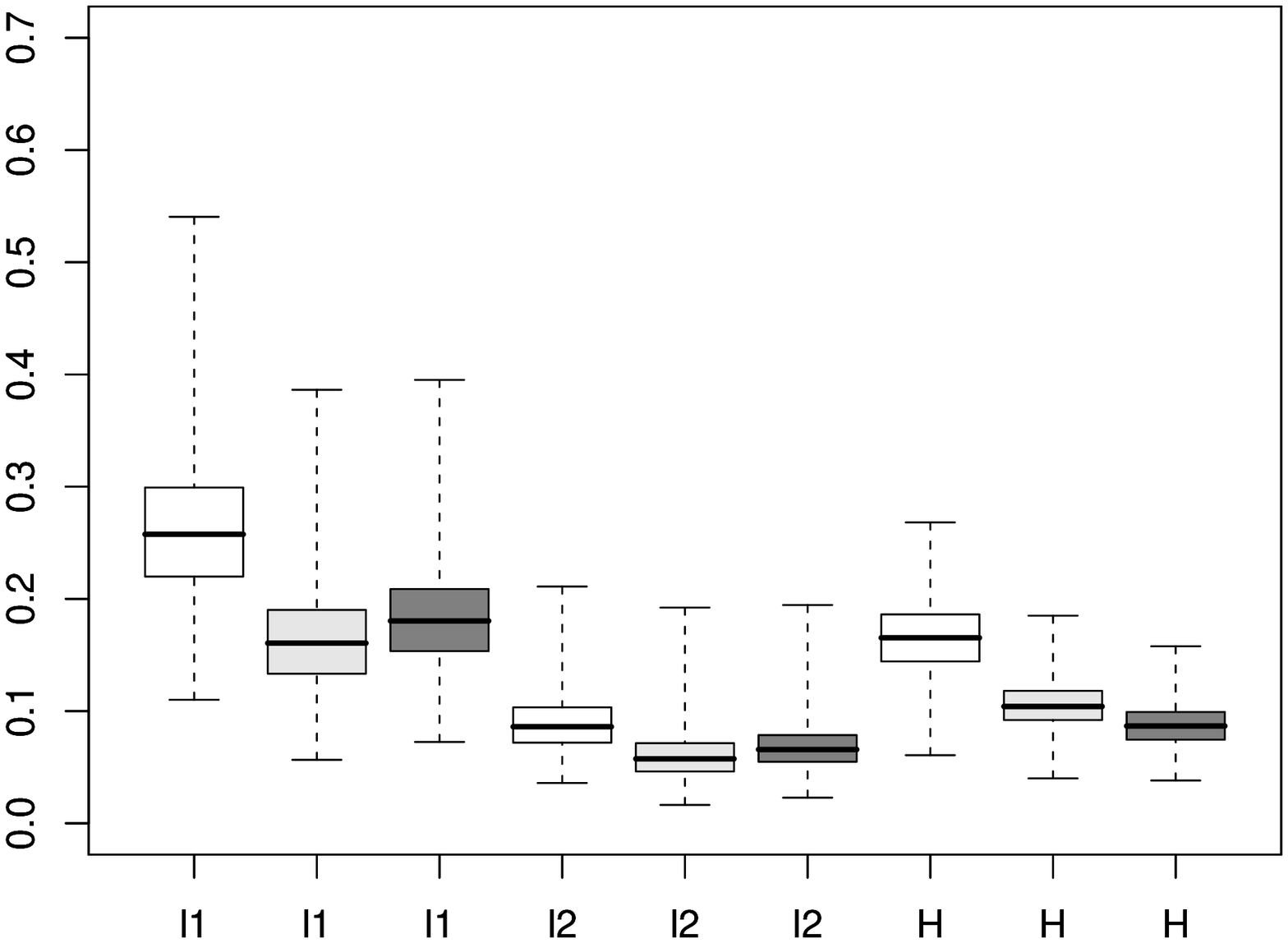}
\caption{Monte Carlo comparison of the estimators: boxplots of $m=1000$
distances of the estimators $\widehat p_n$ (white), $\widehat p_n^R$ (light grey) and $\widehat p_n^G$ (dark grey) from the truth for a sample size of $n=100$.
Left: the true distribution is the discrete uniform; and right: the true distribution
is the geometric distribution with $\theta=0.75$.}
\label{fig:comparisons_box}
\end{figure}

The above examples illustrate our main conclusion: the MLE preforms better when 
the true distribution $p$ has intervals of constancy, while the MLE and rearrangement 
estimators are competitive when $p$ is strictly monotone.  Asymptotically, it turns 
out that the MLE is superior if $p$ has any periods of constancy, while the empirical 
and rearrangement estimators are equivalent.  However, if $p$ is strictly monotone, 
then all three estimators have the same asymptotic behaviour.  

Both the MLE and monotone rearrangement estimators have been considered 
in the literature for the decreasing probability density function.  The MLE, or 
Grenander estimator, has been studied extensively, and much is known about its behaviour.  
In particular, if the true density is locally strictly decreasing, then the estimator converges 
at a rate of $n^{1/3}$, and if the true density is locally flat, then the estimator converges 
at a rate of $n^{1/2}$, cf. \cite{MR0267677, MR1745821},  and the references therein 
for a further history of the problem.  In both cases
the limiting distribution is characterized via the LCM of a Gaussian process.

The monotone rearrangement estimator for the continuous density was introduced 
by \cite{MR1486918} (see also \cite{MR2225147}). It is found by calculating the 
monotone rearrangement of a kernel density estimator (see e.g. \cite{MR1415616}).  
\cite{MR1486918} shows that this estimator also converges at the $n^{1/3}$ rate if the 
true density is locally strictly decreasing, and it is shown through Monte Carlo 
simulations that it has better behaviour than the MLE for small sample size.  
The latter is done by comparing the $L_1$ metrics for different, strictly decreasing, 
densities.  Unlike our Example \ref{ExampleGeometricSSOneHundred}, the 
Hellinger distance is not considered.

The outline of this paper is as follows.  
In Section \ref{sec:InequalitiesAndConsistency} we show that all 
three estimators are consistent.  We also establish some small sample size 
relationships between the estimators.  Section \ref{sec:LimitDistributions} is 
dedicated to the  limiting distributions of the estimators, where we show that the rate 
of convergence is $n^{1/2}$ for all three estimators.  Unlike the continuous case, 
the local behaviour of the MLE is equivalent to that of the empirical estimator 
when the true mass function is strictly decreasing.  In Section \ref{sec:limitdistributions_metrics} 
we consider the limiting behaviour of the $\ell_p$ and Hellinger distances of the estimators. 
In Section \ref{sec:mixing}, we consider the estimation of the mixing distribution $q$.  
Proofs and some technical results are given in Section \ref{sec:proofs}.  
R code to calculate the maximum likelihood estimator (i.e. $\gren(\widehat p_n^G)$) 
is available from the website of the first author: (will be provided). 

\section{Some inequalities and consistency results}
\label{sec:InequalitiesAndConsistency}

We begin by establishing several relationships between the three different estimators.

\begin{thm}\label{thm:BasicInequalities}
\begin{list}{}
        {\setlength{\topsep}{15pt}
        \setlength{\parskip}{0pt}
        \setlength{\partopsep}{0pt}
        \setlength{\parsep}{0pt}
        \setlength{\itemsep}{6pt}
        \setlength{\leftmargin}{15pt}}
\item[(i).]
Suppose that $p$ is monotone decreasing.  Then
\begin{eqnarray}
\max\{H(\widehat{p}_n^G,p), H(\widehat{p}_n^R,p)\} &\leq& H(\widehat{p}_{n},p),\label{ineq:hellinger}\\
\max\left\{||\widehat{p}_n^G-p||_k , ||\widehat{p}_n^R-p||_k\right\} &\leq& ||\widehat{p}_{n}-p||_k, \ \ 1\leq k\leq \infty.
\label{ineq:ellk}
\end{eqnarray}
\item[(ii).]
If $p$ is the uniform distribution on $\{0,\ldots, y \}$ for some integer $y$, then
\begin{eqnarray*}
H(\widehat p_n, p)&=&H(\widehat p_n^R, p),\\
||\widehat{p}_n^R-p||_k &=& ||\widehat{p}_{n}-p||_k, \ \ 1\leq k\leq \infty.
\end{eqnarray*}
\item[(iii).]
If  $\widehat{p}_n$ is monotone then $\widehat{p}_n^G = \widehat{p}_n^R = \widehat{p}_n$.
Under the discrete uniform distribution on $\{ 0, \ldots , y \}$, this occurs with probability
$$
P( \widehat{p}_{n,0} \ge \widehat{p}_{n,1} \ge \cdots \ge \widehat{p}_{n,y} ) \rightarrow \frac{1}{(y+1)!}
\qquad \mbox{as} \ \ n \rightarrow \infty .
$$
If $p$ is strictly monotone with the support of $p$ equal to $\{0, \ldots, y\}$ where $y\in\NN$, then
$$
P( \widehat{p}_{n,0} \ge \widehat{p}_{n,1} \ge \cdots \ge \widehat{p}_{n,y} ) \rightarrow 1,
$$
as $n\rightarrow\infty.$
\end{list}
\end{thm}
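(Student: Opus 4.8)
The plan is to treat the two operators $\rear$ and $\gren$ separately, and within each to isolate the one genuinely delicate estimate (the Hellinger bound for $\gren$) from the routine ones. For the rearrangement estimator I would first prove a purely deterministic lemma: if $p$ is monotone decreasing and $w$ is any nonnegative vector, then $\|\rear(w)-p\|_k\le\|w-p\|_k$ for every $1\le k\le\infty$. This is the classical rearrangement inequality, established by a two-point exchange: for convex $\phi$ and indices with $p_a\ge p_b$, $w_\alpha\le w_\beta$, the map $w\mapsto\phi(p_a-w)-\phi(p_b-w)$ is nonincreasing, which gives $\phi(p_a-w_\beta)+\phi(p_b-w_\alpha)\le\phi(p_a-w_\alpha)+\phi(p_b-w_\beta)$ (with the analogous statement for the maximum when $k=\infty$). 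Hence every adjacent transposition moving $w$ toward decreasing order can only decrease the distance, and finitely many such swaps produce $\rear(w)$; taking $\phi(t)=|t|^k$ yields \eqref{ineq:ellk} for $\widehat p_n^R$. For the Hellinger bound I would use that $t\mapsto\sqrt t$ is increasing, so $\rear$ commutes with coordinatewise square roots, $\sqrt{\widehat p_n^R}=\rear(\sqrt{\widehat p_n})$; since $\sqrt p$ is also decreasing, the $k=2$ case applied to $\sqrt{\widehat p_n}$ with target $\sqrt p$ gives $H(\widehat p_n^R,p)\le H(\widehat p_n,p)$.

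For the Grenander estimator the $\ell_k$ bounds in \eqref{ineq:ellk} I would obtain from the contraction property of isotonic regression toward a monotone function: for $k=2$ this is simply that $\widehat p_n^G$ is the Euclidean projection of $\widehat p_n$ onto the closed convex cone of decreasing sequences, which contains $p$, so the projection is nonexpansive; the extension to all $1\le k\le\infty$ is classical and can be recovered from the block-averaging representation of $\gren$. The Hellinger bound \eqref{ineq:hellinger} for $\widehat p_n^G$ is the crux and needs a different idea. Writing $2H^2(w,p)=2-2\sum_x\sqrt{w_x p_x}$ (valid since $\widehat p_n$, $\widehat p_n^G$ and $p$ are all probability vectors), the claim reduces to the affinity inequality $\sum_x\sqrt{\widehat p_{n,x}^G\, p_x}\ge\sum_x\sqrt{\widehat p_{n,x}\, p_x}$. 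I would prove this by establishing the partial-sum domination
\begin{eqnarray*}
\sum_{x=0}^j\sqrt{\widehat p_{n,x}^G}\ \ge\ \sum_{x=0}^j\sqrt{\widehat p_{n,x}},\qquad j\ge 0.
\end{eqnarray*}
On each block $[s,t]$ on which $\gren$ is constant equal to the average $c$, the defining property of the least concave majorant gives $\sum_{x=s}^j\widehat p_{n,x}=\FF_n(j)-\FF_n(s-1)\le (j-s+1)c$ for every $j\in[s,t]$, whence by concavity of $\sqrt{\cdot}$ (Jensen) $\sum_{x=s}^j\sqrt{\widehat p_{n,x}}\le(j-s+1)\sqrt c=\sum_{x=s}^j\sqrt{\widehat p_{n,x}^G}$; summing the completed blocks to the left yields the displayed inequality. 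Since $\sqrt p$ is nonnegative and decreasing, Abel summation turns this partial-sum domination into the affinity inequality, the boundary term vanishing because both estimators are finitely supported. This step, converting a statement about the distribution function (where $\gren$ naturally dominates) into one about square roots of the masses, is the part I expect to require the most care.

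Part (ii) is then immediate once the mechanism is isolated: under the uniform law on $\{0,\dots,y\}$ every observation lies in the support, $p_x\equiv 1/(y+1)$ there and $p_x=0$ for $x>y$, so $\{\widehat p_{n,x}^R\}_{x=0}^y$ is merely a permutation of $\{\widehat p_{n,x}\}_{x=0}^y$; because $p$ is constant on $\{0,\dots,y\}$ both $\sum_x|\widehat p_{n,x}-p_x|^k$ and $\sum_x(\sqrt{\widehat p_{n,x}}-\sqrt{p_x})^2$ are invariant under this permutation, giving the claimed equalities. Finally, in part (iii) the identity for monotone $\widehat p_n$ is definitional: if $\widehat p_n$ is already decreasing then $\rear$ fixes it, while $\FF_n$ is concave so its least concave majorant is $\FF_n$ itself and $\gren$ fixes it too. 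For the uniform law the count vector is multinomial with equal cell probabilities, hence exchangeable, so each of the $(y+1)!$ strict orderings of $(\widehat p_{n,0},\dots,\widehat p_{n,y})$ has equal probability; since $P(\widehat p_{n,x}=\widehat p_{n,x'})=O(n^{-1/2})\to0$ for $x\ne x'$, the probability of the decreasing ordering converges to $1/(y+1)!$. For strictly decreasing $p$ with support $\{0,\dots,y\}$, the weak law gives $\widehat p_{n,x}\to p_x$ with $p_x-p_{x+1}>0$ for $x<y$, so each of the finitely many events $\{\widehat p_{n,x}\ge\widehat p_{n,x+1}\}$ has probability tending to $1$, and hence so does their intersection.
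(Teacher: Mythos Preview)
Your proposal is correct and follows the same overall architecture as the paper: treat $\rear$ and $\gren$ separately, obtain the $\ell_k$ bounds from the convex-function contraction property (the paper packages this as its Lemma~\ref{lem:key_ineq}\,(2), citing Robertson--Wright--Dykstra for $\gren$ and adapting Lieb--Loss for $\rear$), and reduce the Hellinger bound to an affinity inequality via $H^2(\tilde p,p)=1-\sum_x\sqrt{\tilde p_x p_x}$.

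Two differences are worth flagging. For the Hellinger--rearrangement inequality you reduce to the $\ell_2$ case by observing that $\rear$ commutes with coordinatewise $\sqrt{\cdot}$; the paper instead invokes the Hardy--Littlewood--P\'olya rearrangement inequality directly on the affinity sum. Both are clean one-liners. For the Hellinger--Grenander inequality, your route via the partial-sum domination $\sum_{x\le j}\sqrt{\widehat p_{n,x}^G}\ge\sum_{x\le j}\sqrt{\widehat p_{n,x}}$ followed by Abel summation against the decreasing weights $\sqrt{p_x}$ is in fact more complete than the paper's argument: the paper records only the blockwise Jensen inequality $\sum_{x\in B_i}\sqrt{\widehat p_{n,x}}\le\sum_{x\in B_i}\sqrt{\widehat p_{n,x}^G}$ (stated there with the opposite sign) and does not say how to pass from this unweighted block inequality to the $\sqrt{p_x}$-weighted affinity inequality when $p$ is not constant on the $B_i$. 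Your Abel step is precisely what bridges that gap, and the LCM property you use ($\FF_n(j)-\FF_n(s-1)\le(j-s+1)c$ inside a block) is what makes the domination hold at \emph{every} $j$, not just at block endpoints.

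For part~(iii) you use finite-$n$ exchangeability of the multinomial together with $P(\widehat p_{n,x}=\widehat p_{n,x'})\to 0$, whereas the paper passes to the multivariate normal limit and uses exchangeability there; both arguments are valid and of comparable length.
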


Let $\mc{P}$ denote the collection of all decreasing mass functions on 
$\NN$.  For any estimator $\widetilde{p}_n$ of $p \in \mc{P}$ and $k\geq 1$
let the loss function $L_k$ be defined by  
$L_k (p, \widetilde{p}_n) = \sum_{x \ge 0} | \widetilde{p}_{n,x} - p_x |^k$, 
with $L_\infty(p, \widetilde{p}_n) = \sup_{x \ge 0} | \widetilde{p}_{n,x} - p_x |$.    The risk of 
$\widetilde{p}_n$ at $p$ is then defined as
\begin{eqnarray}\label{def:risk_k}
R_k(p, \widetilde{p}_n) &=& E_p \left[ \sum_{x \ge 0} | \widetilde{p}_{n,x} - p_x |^k \right ].
\end{eqnarray}

\begin{cor}\label{cor:risk_finite}
When $k=2,$ and for any sample size $n$, it holds that
\begin{eqnarray*}
\sup_{\mc P} R_2(p, \widehat p^G_n) \leq \sup_{\mc P} R_2(p, \widehat p^R_n) = \sup_{\mc P} R_2(p, \widehat p_n).
\end{eqnarray*}
\end{cor}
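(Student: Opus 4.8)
The plan is to deduce both relations in the corollary from Theorem~\ref{thm:BasicInequalities} by integrating the pointwise distance inequalities and then passing to suprema, the only genuinely new ingredient being an explicit evaluation of the worst-case risk of the empirical estimator. Throughout I write $R_2(p,\widetilde p_n) = E_p\|\widetilde p_n - p\|_2^2$, so that the squared-$\ell_2$ loss appearing in \eqref{def:risk_k} is exactly $\|\widetilde p_n - p\|_2^2$.

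First I would dispose of the upper bounds. Fix any $p\in\mc P$. Squaring \eqref{ineq:ellk} with $k=2$ gives the two pathwise inequalities $\|\widehat p_n^G - p\|_2^2 \le \|\widehat p_n - p\|_2^2$ and $\|\widehat p_n^R - p\|_2^2 \le \|\widehat p_n - p\|_2^2$; applying $E_p$ to each yields $R_2(p,\widehat p_n^G)\le R_2(p,\widehat p_n)$ and $R_2(p,\widehat p_n^R)\le R_2(p,\widehat p_n)$, and taking the supremum over $\mc P$ gives $\sup_{\mc P}R_2(p,\widehat p_n^G)\le \sup_{\mc P}R_2(p,\widehat p_n)$ as well as $\sup_{\mc P}R_2(p,\widehat p_n^R)\le \sup_{\mc P}R_2(p,\widehat p_n)$. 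Thus both ``$\le$'' directions are in hand, and once the claimed equality is established the remaining inequality $\sup_{\mc P}R_2(p,\widehat p_n^G)\le\sup_{\mc P}R_2(p,\widehat p_n^R)$ follows immediately by substitution.

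The heart of the matter is the reverse inequality $\sup_{\mc P}R_2(p,\widehat p_n^R)\ge \sup_{\mc P}R_2(p,\widehat p_n)$, and here I would first compute the empirical risk explicitly. Since each $\widehat p_{n,x}$ is an unbiased average of $\mathrm{Bernoulli}(p_x)$ indicators with variance $p_x(1-p_x)/n$, one has $R_2(p,\widehat p_n) = n^{-1}\sum_{x\ge 0}p_x(1-p_x) = n^{-1}(1-\sum_{x\ge 0}p_x^2)$. Maximizing this over $\mc P$ amounts to minimizing $\sum_x p_x^2$, and taking $p$ uniform on $\{0,\dots,y\}$ gives $\sum_x p_x^2 = 1/(y+1)\to 0$ as $y\to\infty$; hence $\sup_{\mc P}R_2(p,\widehat p_n)=1/n$, and this value is approached along the uniform family. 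By part~(ii) of Theorem~\ref{thm:BasicInequalities}, for every such uniform $p$ one has $\|\widehat p_n^R-p\|_2=\|\widehat p_n-p\|_2$ pathwise, so $R_2(p,\widehat p_n^R)=R_2(p,\widehat p_n)$; letting $y\to\infty$ then gives $\sup_{\mc P}R_2(p,\widehat p_n^R)\ge \lim_{y\to\infty}R_2(\mathrm{unif}_y,\widehat p_n) = 1/n=\sup_{\mc P}R_2(p,\widehat p_n)$, which is the desired reverse inequality.

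The only step that is more than bookkeeping is the identification of the worst-case empirical risk: one must see both that $\sup_{\mc P}R_2(p,\widehat p_n)=1/n$ and that this supremum is approached precisely along the uniform family on which the rearrangement and empirical estimators have identical loss by part~(ii). That coincidence --- the uniforms being simultaneously least favorable for $\widehat p_n$ and the distributions on which rearrangement yields no improvement --- is what forces the two suprema to agree; the remaining manipulations are the routine passage from sample-path inequalities to expectations and suprema.
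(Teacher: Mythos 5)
Your proof is correct and follows essentially the same route as the paper: compute $R_2(p,\widehat p_n)=n^{-1}(1-\sum_x p_x^2)\le 1/n$ explicitly, observe that the uniform distributions on $\{0,\dots,y\}$ drive this risk to its supremum $1/n$ while part~(ii) of Theorem~\ref{thm:BasicInequalities} makes the rearrangement risk coincide with the empirical risk exactly on that family, and combine with the pathwise inequalities of part~(i) for the remaining bounds. The only difference is cosmetic (the paper works with $nR_2$ and an $\eps$-argument rather than an explicit limit along the uniform family).
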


\begin{figure}[htb!]
\centering
\includegraphics[width=0.43\textwidth]{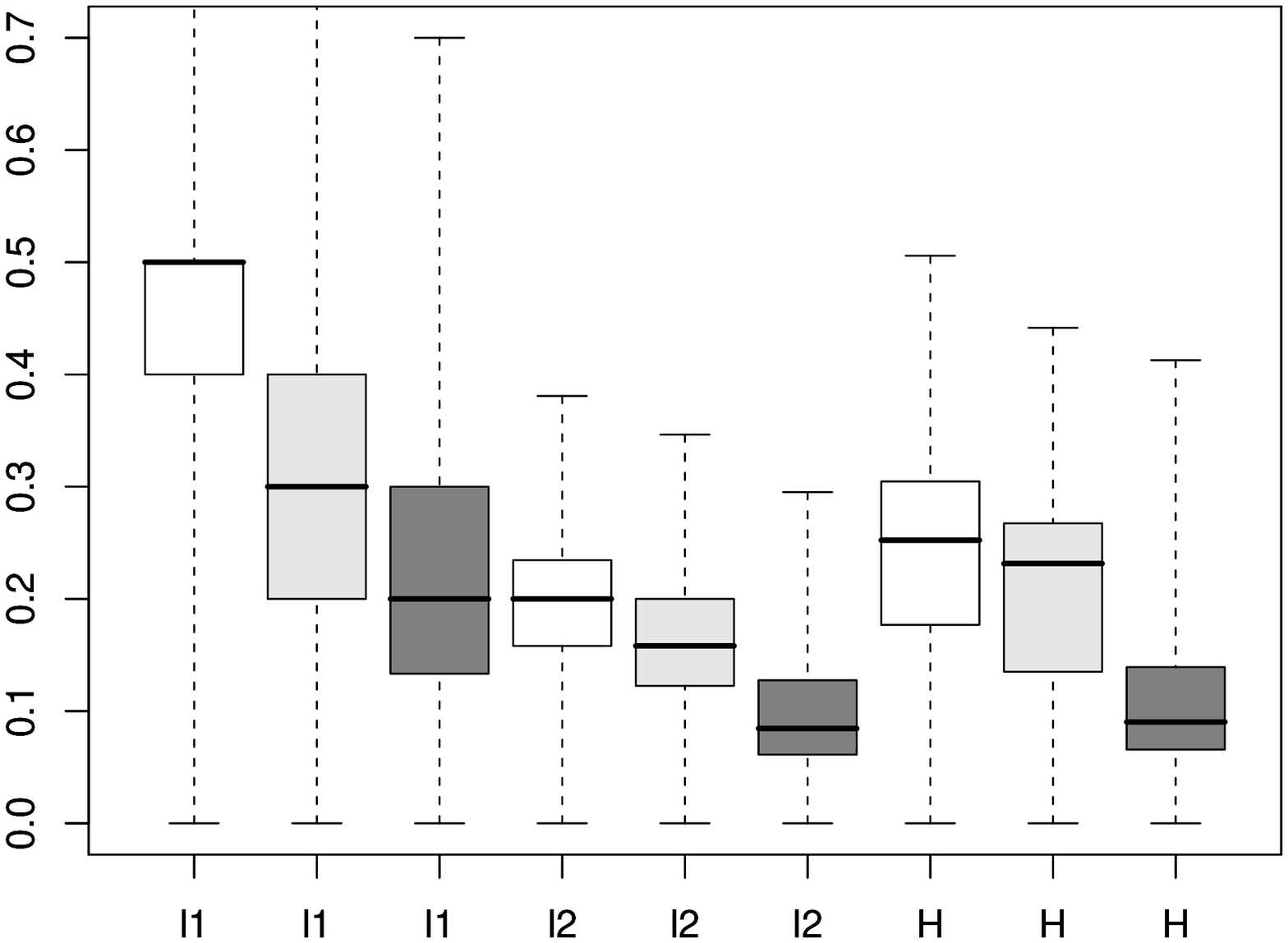}
\includegraphics[width=0.43\textwidth]{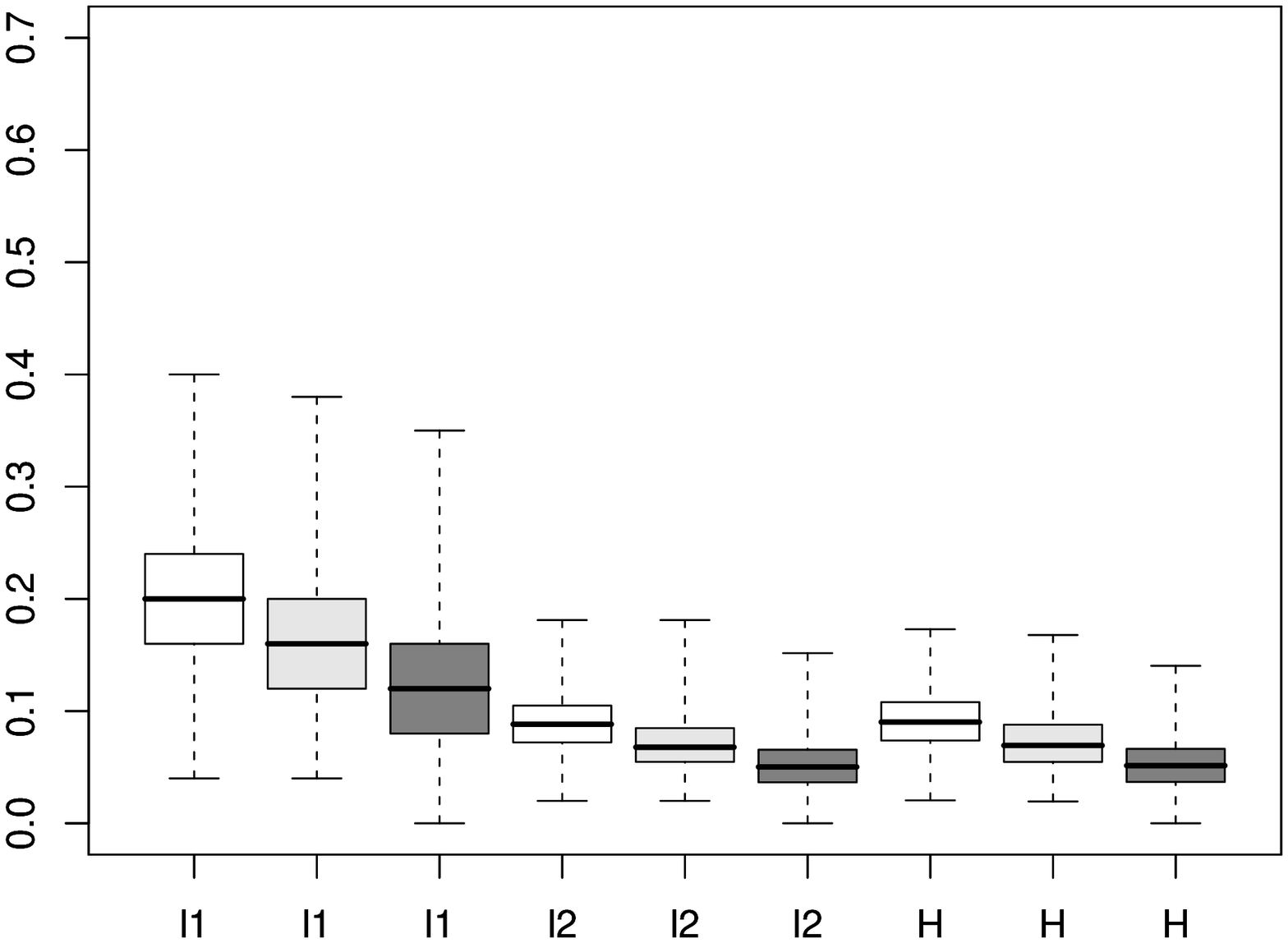}
\includegraphics[width=0.43\textwidth]{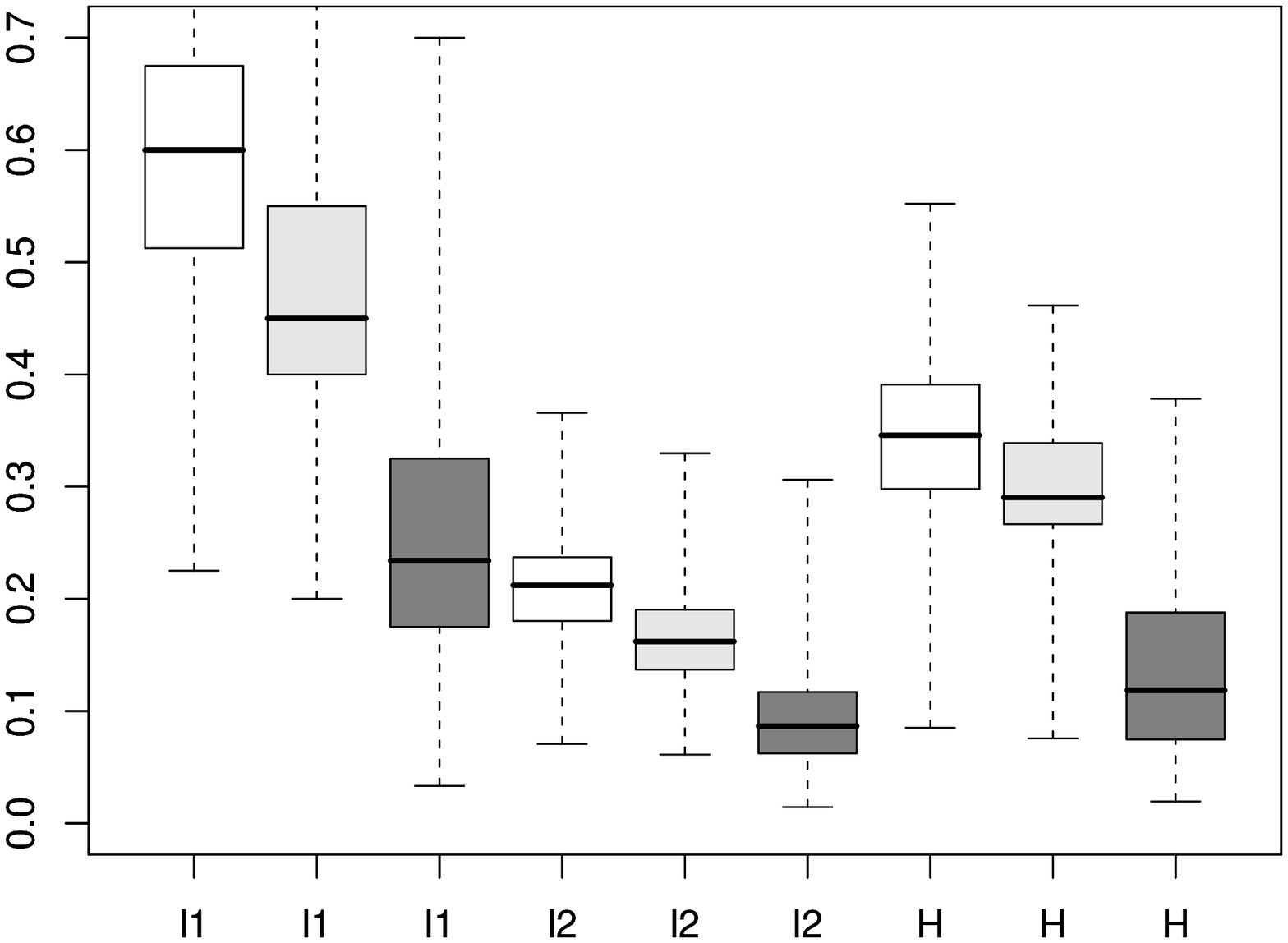}
\includegraphics[width=0.43\textwidth]{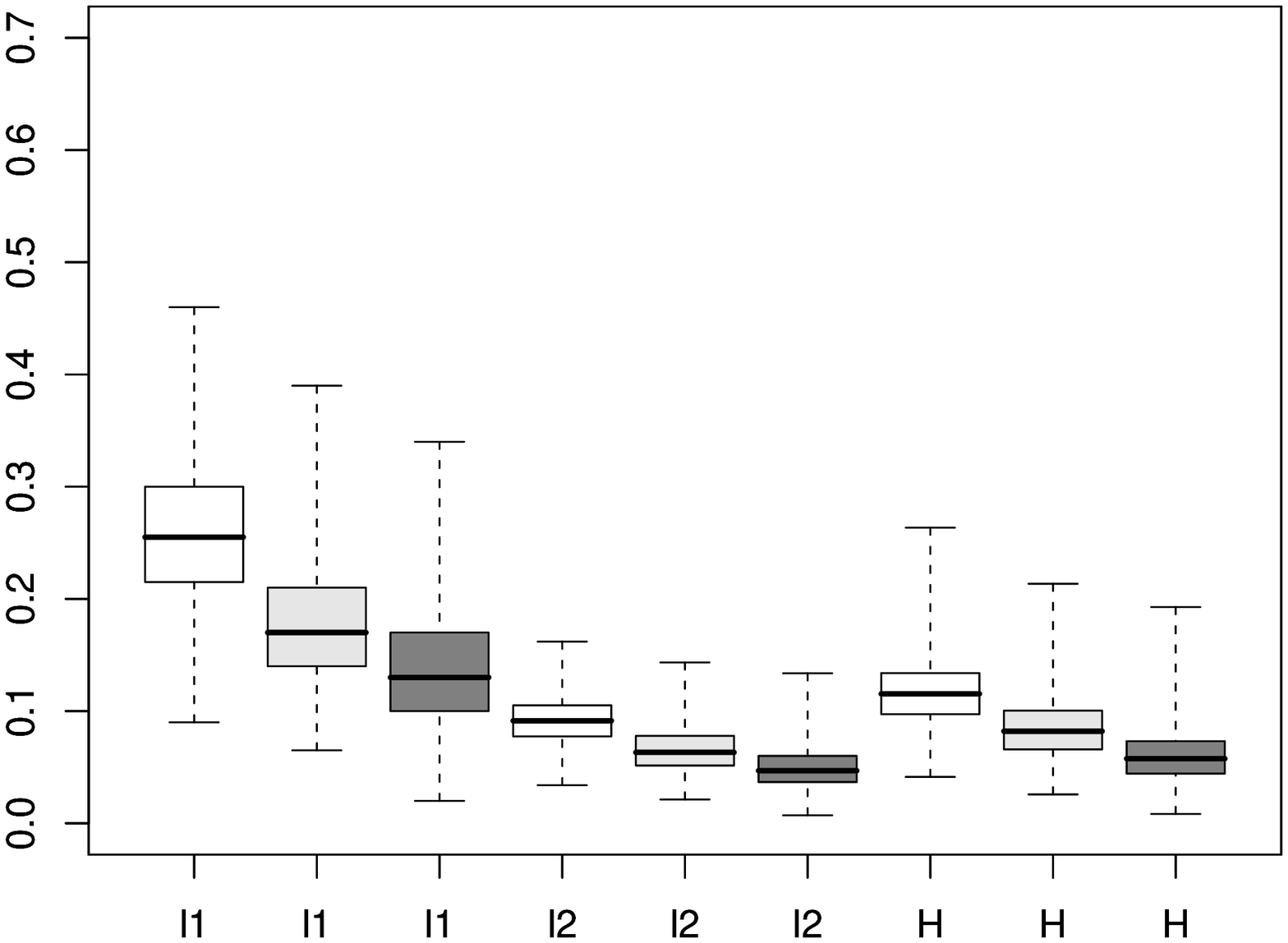}
\includegraphics[width=0.43\textwidth]{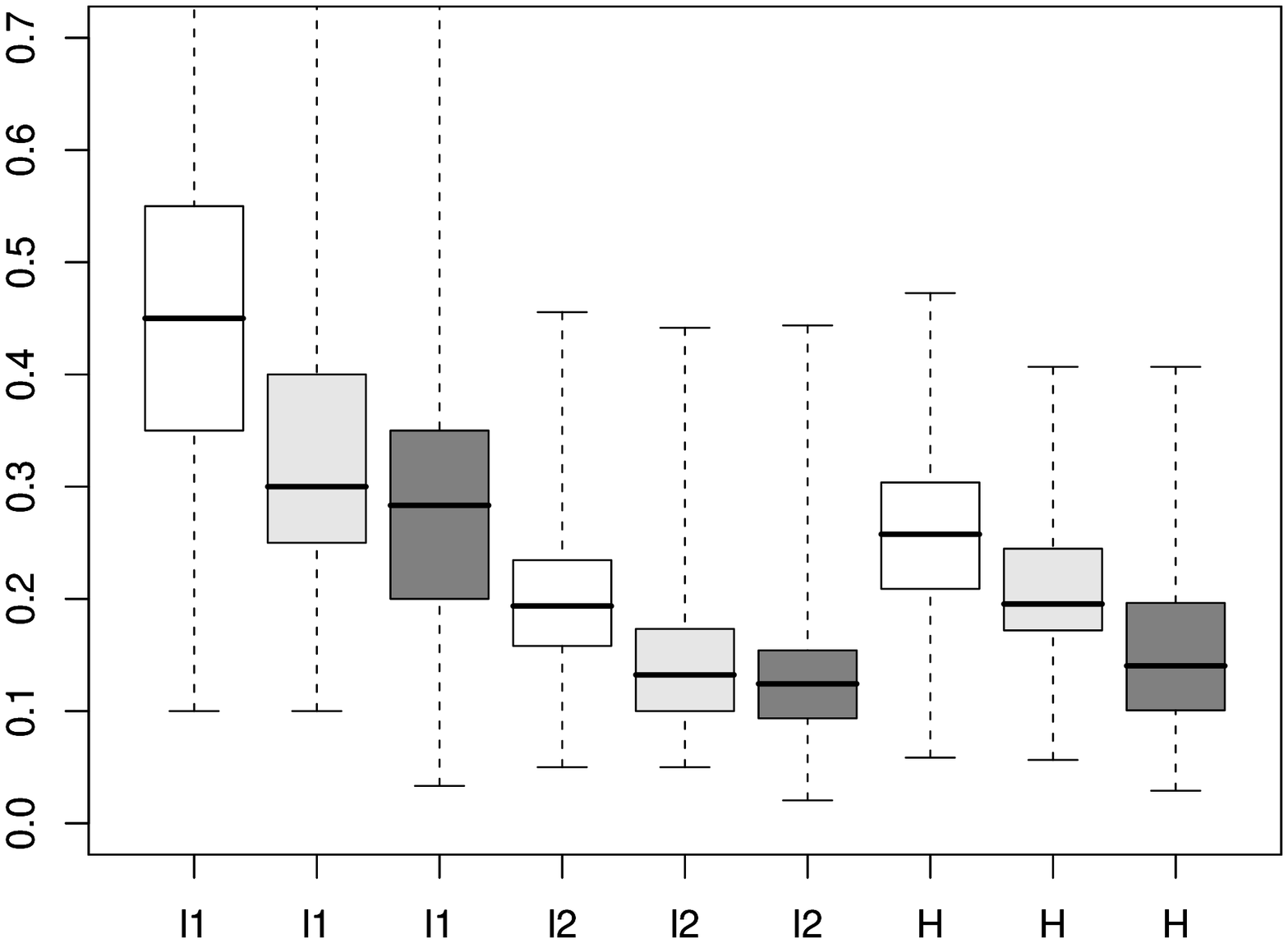}
\includegraphics[width=0.43\textwidth]{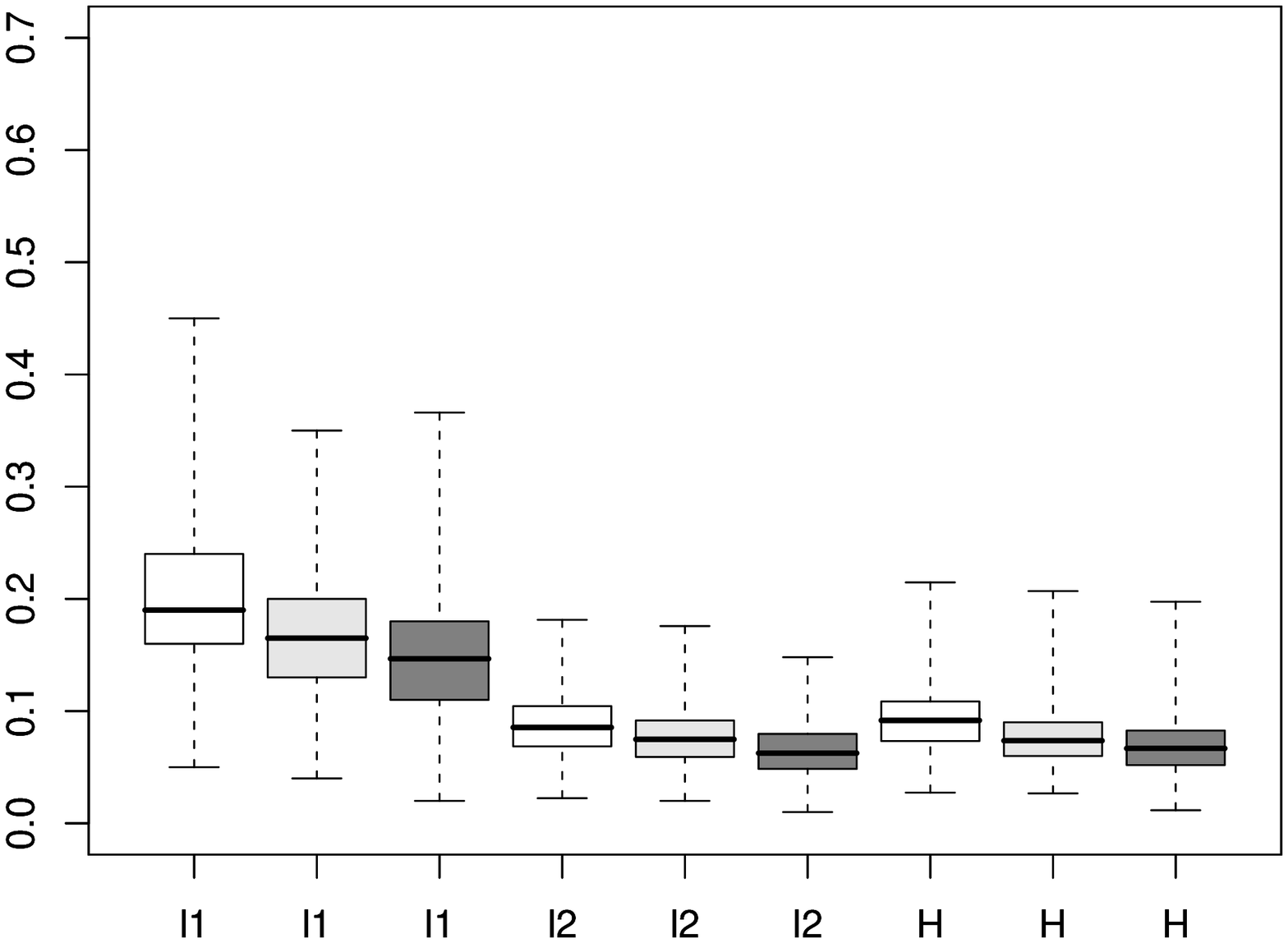}
\caption{Comparison of the estimators $\widehat p_n$ (white), $\widehat p_n^R$ (light grey) and $\widehat p_n^G$ (dark grey).} 
\label{fig:box}
\end{figure}

Based on these results, we now make the following remarks.
\newcounter{count}
\begin{list}{\arabic{count}.}
        {\usecounter{count}
        \setlength{\topsep}{6pt}
        \setlength{\parskip}{0pt}
        \setlength{\partopsep}{0pt}
        \setlength{\parsep}{0pt}
        \setlength{\itemsep}{3pt}
        \setlength{\leftmargin}{25pt}}
\item  It is always better to use a monotone estimator (either $\widehat p_n^R$ 
or $\widehat p_n^G$) to estimate a monotone mass function.
\item If the true distribution is uniform, then clearly the MLE is the better choice.
\item If the true mass function is strictly monotone, then the estimators $\widehat p_n^R$ 
and $\widehat p_n^G$ should be asymptotically equivalent.  We make this 
statement more precise in Sections \ref{sec:LimitDistributions} and~\ref{sec:limitdistributions_metrics}.  
Figure \ref{fig:comparisons_box} (right) shows that in this case $\widehat p_n^R$ 
and $\widehat p_n^G$ have about the same performance for $n=100$.
\item When only the monotonicity constraint is known about the true  $p$,  
then, by Corollary~\ref{cor:risk_finite}, $\widehat p_n^G$ is a better choice of estimator than $\widehat p_n^R.$
\end{list}

\begin{rem}
In continuous density estimation one of the most popular measures of distance 
is the $L_1$ norm, which corresponds to the $\ell_1$ norm on mass functions.  
However, for discrete mass functions, it is more natural to consider the $\ell_2$ norm.  
One of the reasons is made clear in the following sections (cf. Theorem \ref{thm:process}, 
Corollaries \ref{cor:ell2} and \ref{cor:ell1}, and Remark~\ref{rem:wrongspace}).  
The $\ell_2$ space is the smallest space in which we obtain convergence results, 
without additional assumptions on the true distribution $p$. 
\end{rem}

To examine more closely the case when the true distribution $p$ is neither uniform nor 
strictly monotone we turn to Monte Carlo simulations.    Let $p^{U(y)}$ denote the 
uniform mass function on $\{0, \ldots, y\}$.  Figure \ref{fig:box} shows boxplots of 
$m=1000$ samples of the estimators for three distributions:
\newcounter{count2}
\begin{list}{(\alph{count2}).}
        {\usecounter{count2}
        \setlength{\topsep}{6pt}
        \setlength{\parskip}{0pt}
        \setlength{\partopsep}{0pt}
        \setlength{\parsep}{0pt}
        \setlength{\itemsep}{3pt}
        \setlength{\leftmargin}{40pt}}
\item (top) $p=0.2 p^{U(3)}+0.8 p^{U(7)}$
\item (centre) $p=0.15 p^{U(3)}+0.1 p^{U(7)}+0.75 p^{U(11)}$
\item (bottom) $p=0.25 p^{U(1)}+0.2 p^{U(3)}+0.15 p^{U(5)}+0.4 p^{U(7)}$
\end{list}
On the left we have a small sample size of $n=20$, while on the right $n=100$.  
For each distribution and sample size, we calculate the three estimators 
(the estimators $\widehat p_n, \widehat p_n^R$ and $\widehat p_n^G$ are 
shown in white, light grey and dark grey, respectively) and compute their 
distance functions from the truth (Hellinger, $\ell_1$, and $\ell_2$).  Note that the 
MLE outperforms the other estimators in all three metrics, even for small sample sizes.  
It appears also that the more regions of constancy the true mass function has, the better 
the relative performance of the MLE, even for small sample size (see also Figure \ref{fig:comparisons_box}).  
By considering the asymptotic behaviour of the estimators, we are able to make 
this statement more precise in Section \ref{sec:limitdistributions_metrics}.

All three estimators are consistent estimators of the true distribution, regardless of their relative performance.

\begin{thm}\label{thm:GlobalConsistencyTheorem}
Suppose that $p$ is monotone decreasing.
Then all three estimators  $\widehat p_n, \widehat{p}_n^G$ and $\widehat{p}_{n}^R$ are consistent estimators of $p$ in the sense that
\begin{eqnarray*}
\rho(\tilde p_n, p) \rightarrow 0
\end{eqnarray*}
almost surely as $n\rightarrow\infty$ for $\tilde p_n=\widehat p_n, \widehat{p}_n^G$ and $\widehat{p}_n^R$, whenever $\rho(\tilde p,p)=H(\tilde p,p)$ or $\rho(\tilde p,p)=||\tilde p-p||_k, 1\leq k \leq \infty$.
\end{thm}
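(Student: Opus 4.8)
The plan is to reduce the entire statement to the consistency of the empirical estimator $\widehat p_n$, since Theorem~\ref{thm:BasicInequalities}(i) already dominates the errors of $\widehat p_n^G$ and $\widehat p_n^R$ by that of $\widehat p_n$ in every metric considered. Indeed, inequalities \eqref{ineq:hellinger} and \eqref{ineq:ellk} give
\[
\max\{H(\widehat p_n^G, p), H(\widehat p_n^R, p)\} \le H(\widehat p_n, p), \qquad \max\{\|\widehat p_n^G - p\|_k, \|\widehat p_n^R - p\|_k\} \le \|\widehat p_n - p\|_k,
\]
so once $\widehat p_n$ is shown to be consistent in $H$ and in each $\ell_k$, the same follows at once for the other two estimators. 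It therefore suffices to prove $\rho(\widehat p_n, p) \to 0$ almost surely for $\rho = H$ and $\rho = \|\cdot\|_k$, $1 \le k \le \infty$.

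Next I would observe that all of these metrics are controlled by the single quantity $\|\widehat p_n - p\|_1 = \sum_{x \ge 0} |\widehat p_{n,x} - p_x|$. For the $\ell_k$ norms this is just the nesting of sequence spaces, $\|a\|_k \le \|a\|_1$ for every $1 \le k \le \infty$ (the case $k=\infty$ being $\sup_x |a_x| \le \sum_x |a_x|$). For the Hellinger distance one uses the elementary bound $(\sqrt{a} - \sqrt{b})^2 \le |a - b|$, valid for $a, b \ge 0$, which after summing yields $2\,H^2(\widehat p_n, p) \le \|\widehat p_n - p\|_1$. Hence the whole theorem reduces to the single claim that $\|\widehat p_n - p\|_1 \to 0$ almost surely.

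To establish this $\ell_1$ consistency I would proceed in two steps. First, for each fixed $x \in \NN$ the strong law of large numbers gives $\widehat p_{n,x} \to p_x$ almost surely; since $\NN$ is countable, intersecting these probability-one events produces a single event of probability one on which $\widehat p_{n,x} \to p_x$ simultaneously for all $x$. Second, on this event I would invoke Scheff\'e's lemma: the $\widehat p_{n,\cdot}$ are nonnegative mass functions with $\sum_x \widehat p_{n,x} = 1 = \sum_x p_x$ for every $n$, so coordinatewise convergence together with conservation of total mass upgrades to $\sum_x |\widehat p_{n,x} - p_x| \to 0$. This gives $\|\widehat p_n - p\|_1 \to 0$ almost surely and completes the argument.

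The main obstacle, and the only point requiring genuine care, is precisely this passage from coordinatewise convergence to $\ell_1$ convergence: in an infinite-dimensional sequence space pointwise convergence alone does not force convergence in any $\ell_k$ or in Hellinger distance, since mass can escape to infinity. The fact that both $\widehat p_n$ and $p$ are probability mass functions, so that no mass is lost in the limit, is exactly what makes Scheff\'e's lemma applicable and rules out such leakage; everything else is routine once the $\ell_1$ statement is in hand.
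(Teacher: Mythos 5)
Your proof is correct, and the opening reduction---invoking the inequalities \eqref{ineq:hellinger} and \eqref{ineq:ellk} of Theorem \ref{thm:BasicInequalities}(i) to pass from $\widehat p_n^R$ and $\widehat p_n^G$ to the empirical estimator---is exactly what the paper does. Where you genuinely diverge is in proving consistency of $\widehat p_n$ itself. The paper first establishes the $\ell_\infty$ statement by an explicit truncation: for $x>K$ it bounds $|\widehat p_{n,x}-p_x|\le |\FF_n(K)-F(K)|+2(1-F(K))$, chooses $K$ to make the tail small, and combines finitely many applications of the strong law with the Glivenko--Cantelli theorem; it then remarks that the other $\ell_k$ norms follow by a similar argument and that the Hellinger distance is controlled by $\sqrt{\|\cdot\|_1}$. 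You instead make $\ell_1$ the master metric, using $\|a\|_k\le\|a\|_1$ for all $1\le k\le\infty$ and the elementary bound $(\sqrt a-\sqrt b)^2\le|\sqrt a-\sqrt b|\,|\sqrt a+\sqrt b|=|a-b|$ to reduce everything to $\|\widehat p_n-p\|_1\to 0$, which you obtain from coordinatewise almost sure convergence plus Scheff\'e's lemma. Your route is cleaner and more economical: Scheff\'e's lemma does precisely the work of the paper's truncation (ruling out escape of mass to infinity, which you correctly identify as the one nontrivial point) and disposes of all the metrics in a single stroke rather than case by case. What the paper's more hands-on argument buys is a reusable template: essentially the same truncation scheme reappears in the proofs of Corollary \ref{cor:glivenko} and Theorem \ref{thm:mixing_consistent}, where the quantities being controlled are no longer plain probability mass functions and Scheff\'e's lemma does not apply verbatim.
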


As a corollary, we obtain the following Glivenko-Cantelli type result.  

\begin{cor}\label{cor:glivenko}
Let $\widehat F_n^R(x)= \sum_{y=0}^x \widehat p_{n,y}^R$ and $\widehat F_n^G(x)= \sum_{y=0}^x \widehat p_{n,y}^G$, with $F(x)= \sum_{y=0}^x p_y.$  Then
\begin{eqnarray*}
\sup_{x\geq 0} |\widehat F_n^R(x)-F(x)|\rightarrow 0 & \mbox{ and } & 
\sup_{x\geq 0} |\widehat F_n^G(x)-F(x)|\rightarrow 0,
\end{eqnarray*}
almost surely.
\end{cor}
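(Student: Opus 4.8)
The plan is to deduce the uniform convergence of the distribution functions directly from the $\ell_1$ consistency already established in Theorem~\ref{thm:GlobalConsistencyTheorem}. The key observation is that each distribution-function difference is a partial sum of mass-function differences, so it can be dominated uniformly in $x$ by the single (random) quantity $\|\widetilde p_n - p\|_1$.

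First I would fix one estimator, say $\widehat p_n^R$, and write the distribution-function difference as a telescoping partial sum,
\begin{eqnarray*}
\widehat F_n^R(x) - F(x) = \sum_{y=0}^x \left( \widehat p_{n,y}^R - p_y \right).
\end{eqnarray*}
Applying the triangle inequality and then enlarging the range of summation to all of $\NN$ gives, for every $x \ge 0$,
\begin{eqnarray*}
\left| \widehat F_n^R(x) - F(x) \right| \le \sum_{y=0}^x \left| \widehat p_{n,y}^R - p_y \right| \le \sum_{y \ge 0} \left| \widehat p_{n,y}^R - p_y \right| = \| \widehat p_n^R - p \|_1 .
\end{eqnarray*}
Since the right-hand side is free of $x$, taking the supremum over $x \ge 0$ yields the bound $\sup_{x \ge 0} | \widehat F_n^R(x) - F(x) | \le \| \widehat p_n^R - p \|_1$, and the identical argument with $\widehat p_n^G$ in place of $\widehat p_n^R$ gives the corresponding bound for $\widehat F_n^G$.

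Second, I would invoke Theorem~\ref{thm:GlobalConsistencyTheorem} in the case $\rho = \| \cdot \|_1$ (that is, $k=1$), which asserts that $\| \widehat p_n^R - p \|_1 \to 0$ and $\| \widehat p_n^G - p \|_1 \to 0$ almost surely. Combining each almost-sure limit with the uniform bound from the first step then forces the corresponding supremum to zero almost surely, which is exactly the assertion of the corollary.

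There is no genuine obstacle here once the $\ell_1$ consistency is available; the content of the corollary is really just the recognition that $\ell_1$ convergence of the masses is the right input. The only point worth flagging is that the dominating bound is sharp precisely in the $\ell_1$ norm — an $\ell_2$ bound would not suffice to control the partial sums uniformly — so the appeal to Theorem~\ref{thm:GlobalConsistencyTheorem} must be to its $k=1$ case, and the uniformity over $x$ comes for free because the dominating quantity $\| \widetilde p_n - p \|_1$ is a single random variable whose convergence holds on one almost-sure event.
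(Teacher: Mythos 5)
Your proof is correct, but it follows a different route from the paper's. You dominate $\sup_{x\ge 0}|\widehat F_n^R(x)-F(x)|$ (and likewise for $\widehat F_n^G$) by the single random variable $\|\widehat p_n^R-p\|_1$ via the telescoping partial sum and the triangle inequality, and then invoke the $k=1$ case of Theorem \ref{thm:GlobalConsistencyTheorem}; this is clean and the uniformity in $x$ is indeed free. The paper instead runs a truncation argument that only uses the $k=\infty$ case of Theorem \ref{thm:GlobalConsistencyTheorem}: it splits the supremum at a level $K$ with $\sum_{x>K}p_x<\eps/4$, controls the head $\sum_{x=0}^K|\widehat p^G_{n,x}-p_x|$ by sup-norm consistency, and controls the tail $\sum_{x>K}\widehat p^G_{n,x}$ by the pointwise domination $\widehat F_n^G(x)\ge \FF_n(x)$ (and $\widehat F_n^R(x)\ge \FF_n(x)$) together with the classical Glivenko--Cantelli theorem for $\FF_n$. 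The trade-off is that your argument leans on the $\ell_1$ statement of Theorem \ref{thm:GlobalConsistencyTheorem}, which the paper proves only in outline (``a similar approach proves the result for any $1\le k<\infty$''), whereas the paper's argument rests only on the fully detailed $k=\infty$ case plus an elementary monotonicity property of the two constrained estimators; on the other hand, your argument is shorter and does not require introducing the empirical distribution function or the domination inequality at all. Both proofs are complete given the statement of Theorem \ref{thm:GlobalConsistencyTheorem} as written.
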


\section{Limiting distributions}
\label{sec:LimitDistributions}

Next, we consider the large sample behaviour of  $\widehat p_n, \widehat p_n^R$ 
and $\widehat p_n^G$.  To do this, define the fluctuation processes $Y_n, Y_n^R$, and $Y_n^G$ as
\begin{eqnarray*}
Y_{n,x}&=&\sqrt{n}(\widehat p_{n,x}-p_x),\\
Y_{n,x}^R&=&\sqrt{n}(\widehat p_{n,x}^R-p_x),\\
Y_{n,x}^G&=&\sqrt{n}(\widehat p_{n,x}^G-p_x).
\end{eqnarray*}
Regardless of the shape of $p$, the limiting distribution of $Y_n$ is well-known.  
In what follows we use the notation $Y_{n,x}\rightarrow_d Y_{n,x}$ to denote weak convergence of random variables in $\RR$ (we also use this notation for $\RR^d$), and  $Y_n \Rightarrow Y$ to denote that the \emph{process} 
$Y_n$ converges weakly to the process $Y$.  
Let $Y=\{Y_x\}_{x\in\NN}$ be a Gaussian process on the Hilbert space $\ell_2$ with 
mean zero and covariance operator $\mathbf{S}$ such that
$
\langle \mathbf{S} \, e_{(x)}, e_{(x')} \rangle = p_x\delta_{x, x'} -p_x p_{x'},
$
where $e_{(x)}$ denotes a sequence which is one at location $x$, and zero everywhere else.  
The process is well-defined, since
\begin{eqnarray*}
\mbox{trace}\, \mathbf{S} = E\left[||Y||_2^2\right]= \sum_{x\geq 0} p_x(1-p_x) < \infty.
\end{eqnarray*}
For background on Gaussian processes on Hilbert spaces we refer to 
\cite{MR0226684}.

\begin{thm}
\label{thm:l2Gaussian}
For any mass function $p,$ the process $Y_n$ satisfies $Y_n \Rightarrow Y$ in $\ell_2$.
\end{thm}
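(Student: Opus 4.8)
The plan is to view $Y_n$ as a normalized sum of i.i.d. random elements of the separable Hilbert space $\ell_2$ and to establish $Y_n \Rightarrow Y$ by combining convergence of the finite-dimensional distributions with asymptotic tightness. Writing $e_{(x)}$ for the coordinate vectors as in the statement, set $\xi_i = e_{(X_i)} - p$; then each $\xi_i$ lies in $\ell_2$ (indeed $\| e_{(x)} - p\|_2^2 = 1 - 2p_x + \sum_{y} p_y^2 \le 2 < \infty$), the $\xi_i$ are i.i.d. with mean zero, and $Y_n = n^{-1/2}\sum_{i=1}^n \xi_i$. A direct computation gives $E\|\xi_1\|_2^2 = \sum_x p_x(1-p_x) = \mathrm{trace}\,\mathbf{S} < \infty$, which is exactly the finiteness recorded before the theorem and which drives both the existence of the limit and the tightness bound below.

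For the finite-dimensional distributions, fix a finite set $F \subset \NN$ and consider the random vectors $(\xi_{i,x})_{x \in F}$; these are i.i.d., mean zero, with covariance matrix having entries $p_x\delta_{x,x'} - p_x p_{x'}$. The ordinary multivariate central limit theorem then yields $(Y_{n,x})_{x\in F} \rightarrow_d N(0,\Sigma_F)$ with $\Sigma_F$ equal to that matrix. Since these are precisely the finite-dimensional covariances of the process $Y$ determined by $\mathbf{S}$, the finite-dimensional distributions of $Y_n$ converge to those of $Y$, and because the coordinate vectors span a dense subspace of $\ell_2$ this pins down the candidate limit uniquely.

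The main work is tightness of $\{Y_n\}$ in $\ell_2$. I would use the standard criterion for a separable Hilbert space with a fixed orthonormal basis: the sequence is asymptotically tight once (i) each coordinate sequence is tight (immediate from the fidi convergence) and (ii) the tails are uniformly negligible, i.e. $\lim_{k\to\infty}\limsup_n P\big(\sum_{x>k} Y_{n,x}^2 > \epsilon\big) = 0$ for every $\epsilon > 0$. Condition (ii) follows from a first-moment bound: since $E[Y_{n,x}^2] = n\,\mathrm{Var}(\widehat p_{n,x}) = p_x(1-p_x)$, we have $E\big[\sum_{x>k} Y_{n,x}^2\big] = \sum_{x>k} p_x(1-p_x) \le \sum_{x>k} p_x$, which is independent of $n$ and tends to $0$ as $k\to\infty$ because $\sum_x p_x = 1$; Markov's inequality then delivers (ii). This tail estimate is exactly where summability of $p$ (equivalently, finiteness of $\mathrm{trace}\,\mathbf{S}$) is essential, and I expect quoting the Hilbert-space tightness criterion and verifying its interplay with (ii) to be the only delicate point — the fidi step is routine.

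Combining the two steps, $Y_n$ is asymptotically tight with finite-dimensional limits matching $Y$, hence $Y_n \Rightarrow Y$ in $\ell_2$. As a shortcut, one may instead invoke directly the central limit theorem for i.i.d. sums in a separable Hilbert space (valid under $E\|\xi_1\|_2^2 < \infty$), which immediately gives $n^{-1/2}\sum_i \xi_i \Rightarrow N(0,\mathbf{S})$ with $\mathbf{S}$ the covariance operator of $\xi_1$; the fidi computation above then identifies this limit with $Y$.
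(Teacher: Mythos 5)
Your proposal is correct and follows essentially the same route as the paper: convergence of the finite-dimensional distributions (which the paper simply calls standard) combined with tightness in $\ell_2$ verified through the bounds $E[Y_{n,x}^2]=p_x(1-p_x)$ and $\sum_{x\geq m}p_x(1-p_x)\rightarrow 0$, which is exactly the moment-based compactness criterion the paper isolates as Lemma \ref{lem:tight}. The only cosmetic difference is that you phrase the tail condition probabilistically via Markov's inequality and also note the Hilbert-space CLT shortcut, neither of which changes the substance.
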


\begin{rem}
We assume that $Y$ is defined only on the support of the mass function $p$.  
That is, let $\kappa = \sup\{x: p_x >0\}$.  If $\kappa<\infty$ then $Y=\{Y_x\}_{x=0}^\kappa$.
\end{rem}

\subsection{Local Behaviour}

At a fixed point $x$ there are only two possibilities for the true mass function $p$:
either $x$ belongs to a flat region for $p$ (i.e. $p_r = \ldots = p_x = \ldots = p_s$ 
for some $r \le x \le s$), or $p$ is strictly decreasing at $x$:  $p_{x-1} > p_x > p_{x+1}$.
In the first case the three estimators exhibit different limiting behaviour,
while in the latter all three have the same limiting distribution.
In some sense, this result is not surprising.  Suppose that $x$ is such
that  $p_{x-1}>p_x>p_{x+1}.$  Then asymptotically this will hold also
for $\widehat p_n:$ $\widehat p_{n,x-k}>\widehat p_{n,x}>\widehat p_{n,x+k}$ for
$k\geq 1$ and for sufficiently large $n$.
Therefore, in the rearrangement of $\widehat p_n$ the values at $x$ will
always stay the same, i.e. $\widehat p_{n,x}^R=\widehat p_{n,x}$.
Similarly, the empirical distribution function $\FF_n$ will also be
locally concave at $x$, and therefore both $x, x-1$ will be touchpoints
of $\FF_n$ with its LCM.  This implies that $\widehat p_{n,x}^G=\widehat p_{n,x}$.

On the other hand, suppose that $x$ is such that $p_{x-1}=p_x=p_{x+1}.$
Then asymptotically the empirical density will have random order near $x$,
and therefore both re-orderings (either via rearrangement or via the LCM)
will be necessary to obtain $\widehat p_{n,x}^R$ and $\widehat p_{n,x}^G$.

\subsubsection{When $p$ is flat at $x$.}

We begin with some notation.
Let $q=\{q_x\}_{x\in\NN}$ be a sequence,
and let $r\leq s$ be positive integers.
We define $q^{(r,s)}=\{q_r, q_{r+1}, \ldots, q_{s-1}, q_s\}$
to be the $r$ through $s$ elements of $q$.  

\begin{prop}\label{prop:flat}
Suppose that for some $r,s \in \NN$ with $s-r\geq 1$ the probability mass function
$p$ satisfies $p_{r-1}> p_r = \cdots = p_s > p_{s+1}$.  Then
\begin{eqnarray*}
(Y_n^R)^{(r,s)}&\rightarrow_d& \rear(Y^{(r,s)}),\\
(Y_n^G)^{(r,s)}&\rightarrow_d& \gren(Y^{(r,s)}).
\end{eqnarray*}
\end{prop}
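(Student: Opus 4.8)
The plan is to treat both assertions with one template: (i) a \emph{localization} step showing that, on an event of probability tending to one, the global operation ($\rear$ or $\gren$) restricted to the block $\{r,\dots,s\}$ coincides with the same operation applied only to $\widehat p_n^{(r,s)}$; (ii) an \emph{equivariance} step exploiting that both operations commute with positive scaling and with adding a constant vector; and (iii) a \emph{continuous mapping} step. Write $c=p_r=\cdots=p_s$ and let $\mathbf 1$ denote the all-ones vector of length $s-r+1$. Both maps satisfy $\Phi(\alpha w+\beta\mathbf 1)=\alpha\,\Phi(w)+\beta\mathbf 1$ for every $\alpha>0$, $\beta\in\RR$: for $\rear$ this holds because sorting is unchanged under an increasing affine relabeling of the entries, and for $\gren$ because adding $\beta$ per coordinate adds the affine function $\beta(j+1)$ to the partial sums while scaling by $\alpha$ scales them, neither of which moves the touchpoints of the least concave majorant. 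Finally, $\rear$ and $\gren$ are continuous on $\RR^{s-r+1}$, and Theorem~\ref{thm:l2Gaussian}, projected onto coordinates $r,\dots,s$, gives $Y_n^{(r,s)}\rightarrow_d Y^{(r,s)}$.

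\textbf{Rearrangement.} I would introduce the sandwiching event
\[
B_n=\Bigl\{\min_{0\le i<r}\widehat p_{n,i}>\max_{r\le i\le s}\widehat p_{n,i}\Bigr\}\cap\Bigl\{\min_{r\le i\le s}\widehat p_{n,i}>\sup_{i>s}\widehat p_{n,i}\Bigr\}.
\]
Using $\|\widehat p_n-p\|_\infty\to 0$ almost surely (Theorem~\ref{thm:GlobalConsistencyTheorem} with $k=\infty$) together with $p_{r-1}>c$ and $p_{s+1}<c$, one shows $P(B_n)\to 1$; the only delicate point is the supremum over the infinitely many indices $i>s$, which is controlled uniformly via $\sup_{i>s}\widehat p_{n,i}\le p_{s+1}+\|\widehat p_n-p\|_\infty$. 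On $B_n$ the $r$ entries below the block occupy the top positions and the entries in positions $r,\dots,s$ of $\rear(\widehat p_n)$ are exactly the sorted entries of $\widehat p_n^{(r,s)}$, i.e.\ $(\widehat p_n^R)^{(r,s)}=\rear(\widehat p_n^{(r,s)})$. Applying equivariance with $\alpha=\sqrt n$ and $\beta=-\sqrt n\,c$ gives $(Y_n^R)^{(r,s)}=\rear(Y_n^{(r,s)})$ on $B_n$, and the continuous mapping theorem finishes this half.

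\textbf{MLE (the main obstacle).} For the Grenander estimator the localization is the delicate step. I would show that, with probability tending to one, both $r-1$ and $s$ are touchpoints (vertices) of the least concave majorant of $\FF_n$; granted this, the majorant on the cumulative interval $[r-1,s]$ depends only on $\FF_n(r-1),\dots,\FF_n(s)$, so $(\widehat p_n^G)^{(r,s)}=\gren(\widehat p_n^{(r,s)})$ and one concludes exactly as above. That $r-1$ is a vertex is equivalent to requiring, for all $0\le i\le r-1$ and all $j\ge r$,
\[
\frac{\FF_n(r-1)-\FF_n(i-1)}{r-i}\ \ge\ \frac{\FF_n(j)-\FF_n(r-1)}{\,j-r+1\,}.
\]
The left side is the empirical average of $\widehat p_{n,i},\dots,\widehat p_{n,r-1}$, whose limit is an average of $p_i,\dots,p_{r-1}\ge p_{r-1}>c$, while the right side is the empirical average of $\widehat p_{n,r},\dots,\widehat p_{n,j}$, which is at most $c+\|\widehat p_n-p\|_\infty$ \emph{uniformly in $j$} because $p_k\le c$ for every $k\ge r$. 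Since $p_{r-1}>c$, this holds simultaneously for all admissible $i,j$ once $n$ is large, so $r-1$ is a vertex with probability tending to one; the argument for $s$ is symmetric, using $p_k\ge c$ for $k\le s$ on the left and $p_k\le p_{s+1}<c$ for $k\ge s+1$ on the right. I expect the crux to be precisely this uniform-in-$j$ control of the right-hand averages, which is what prevents the concave majorant from pooling across the strict-decrease boundaries.

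\textbf{Conclusion.} In each case the identity relating $(Y_n^R)^{(r,s)}$ to $\rear(Y_n^{(r,s)})$ (respectively $(Y_n^G)^{(r,s)}$ to $\gren(Y_n^{(r,s)})$) holds on an event of probability tending to one, and $Y_n^{(r,s)}\rightarrow_d Y^{(r,s)}$. Since coincidence on such events is asymptotically negligible and $\rear,\gren$ are continuous, the continuous mapping theorem yields $(Y_n^R)^{(r,s)}\rightarrow_d\rear(Y^{(r,s)})$ and $(Y_n^G)^{(r,s)}\rightarrow_d\gren(Y^{(r,s)})$, as claimed.
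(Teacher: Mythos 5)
Your proof is correct. The rearrangement half is essentially identical to the paper's: the authors likewise observe that for large $n$ one has $\widehat p_{n,r-k}>\widehat p_{n,x}>\widehat p_{n,s+k}$ for all $x\in\{r,\dots,s\}$ and $k\geq 1$, deduce $(\widehat p_n^R)^{(r,s)}=\rear(\widehat p_n^{(r,s)})$, and invoke the continuous mapping theorem. For the Grenander half, however, you take a genuinely different route. The paper proves that part via the switching relation \eqref{line:switching}: it writes $P(Y^G_{n,x}<t)$ as the probability that an argmax of the drifted process $\FF_n(y)-(p_x+n^{-1/2}t)(y+1)$ falls to the left of $x$, shows that the drift term forces the argmax into $\{r-1,\dots,s\}$, and passes to the limit by argmax continuity — which yields convergence of the one-dimensional marginals at each $x$, with the joint convergence of the block deferred to a remark. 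Your argument instead localizes the least concave majorant directly, by showing that $r-1$ and $s$ are vertices with probability tending to one (your uniform-in-$j$ bound on the right-chord slopes is exactly the needed control, and the left-chord slopes are handled correctly by monotonicity of $p$), and then uses the affine equivariance of $\gren$ plus continuous mapping. This is in fact the strategy the paper deploys later for the stronger process-level result (Theorem \ref{thm:process}, where the touchpoint property at the boundaries of constancy intervals plays the same role), so your proof is closer in spirit to that argument than to the paper's proof of this proposition. What your route buys is the joint convergence of the whole vector $(Y_n^G)^{(r,s)}$ in one stroke, which is what the proposition literally asserts; what the switching-relation route buys is a template that generalizes to settings where exact finite-sample localization of the majorant is unavailable (e.g.\ the continuous-density analogues cited in the paper).
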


\begin{figure}[htb!]
\centering
\includegraphics[width=0.49\textwidth]{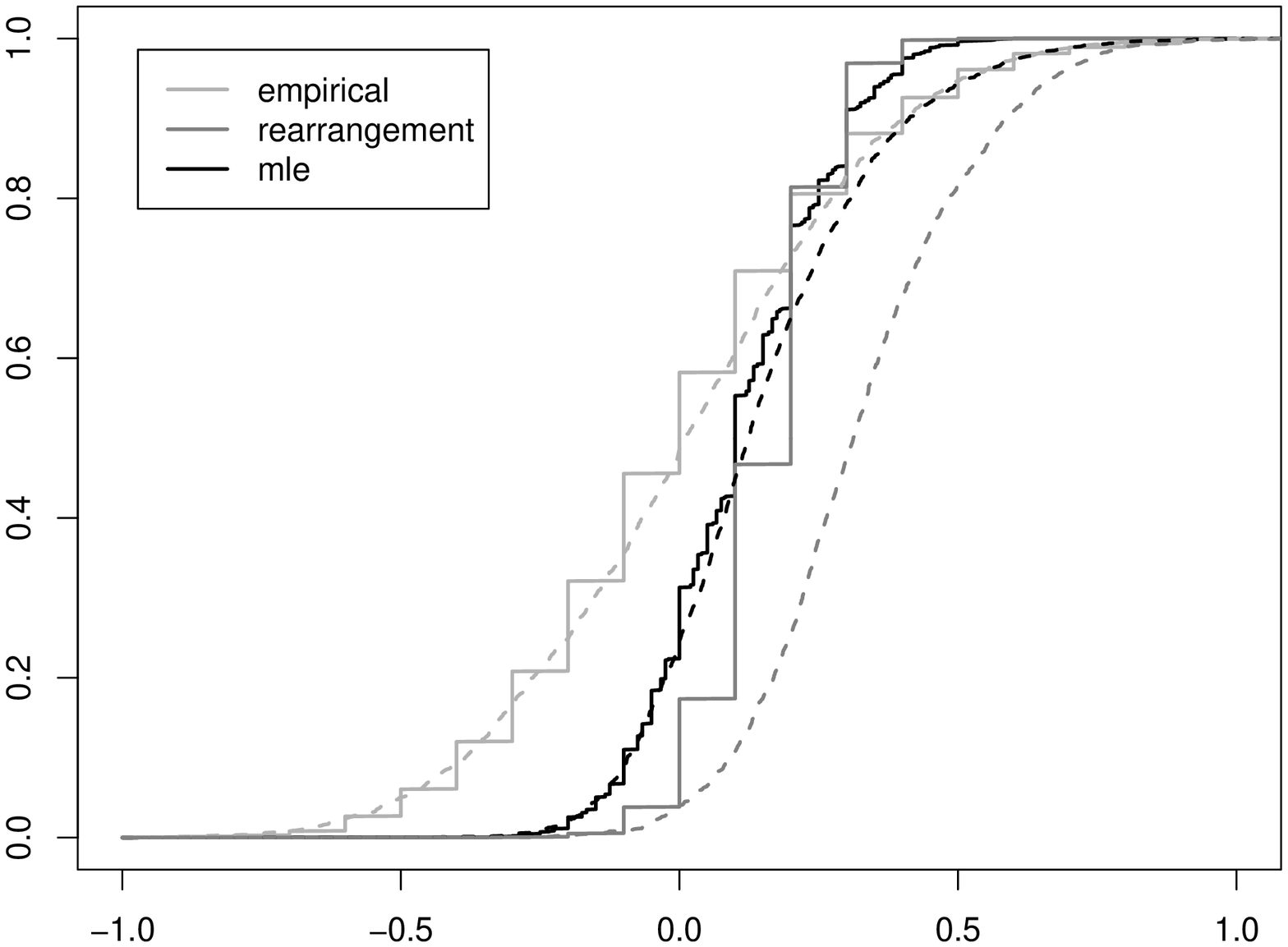}
\includegraphics[width=0.49\textwidth]{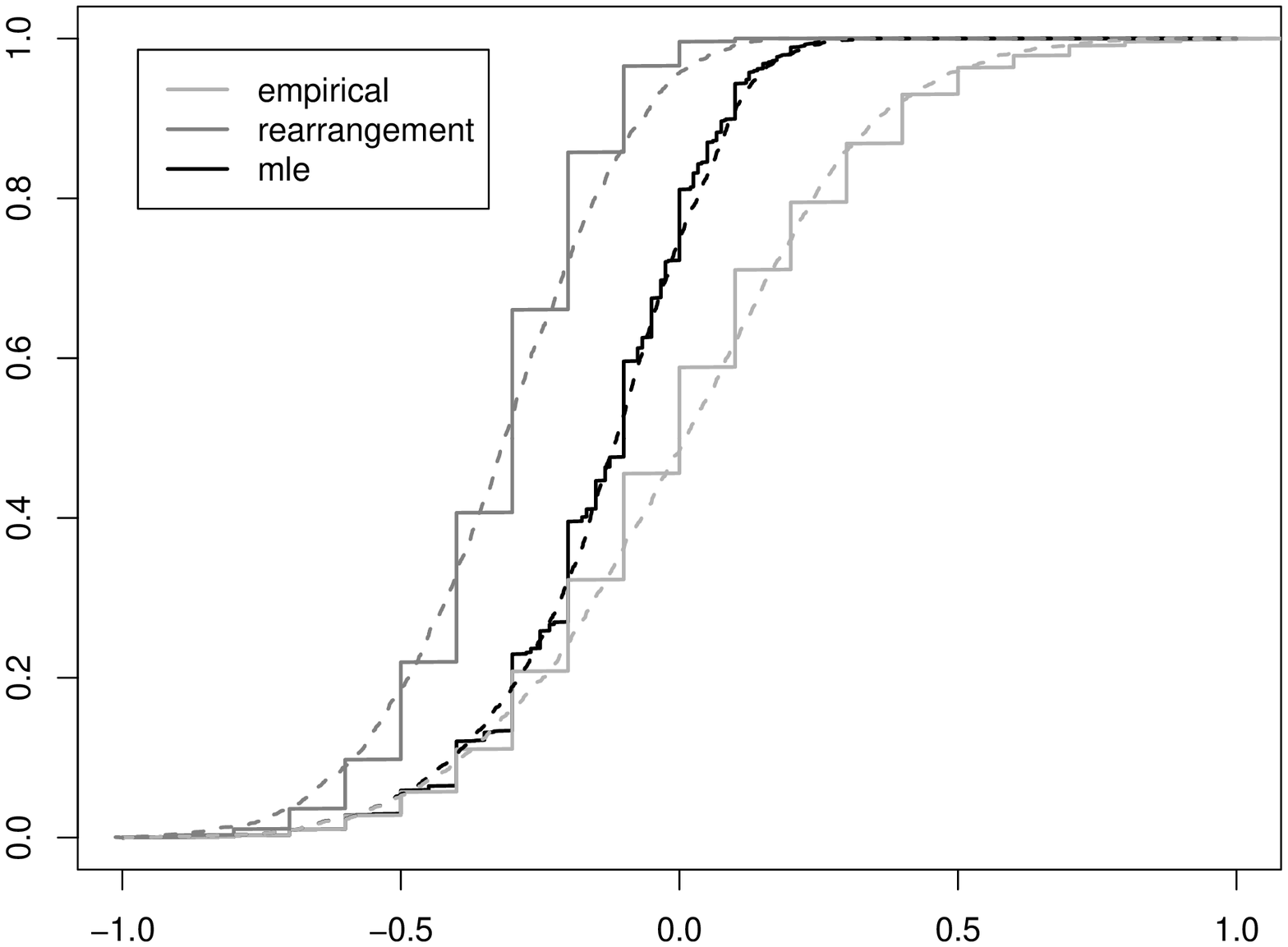}
\includegraphics[width=0.49\textwidth]{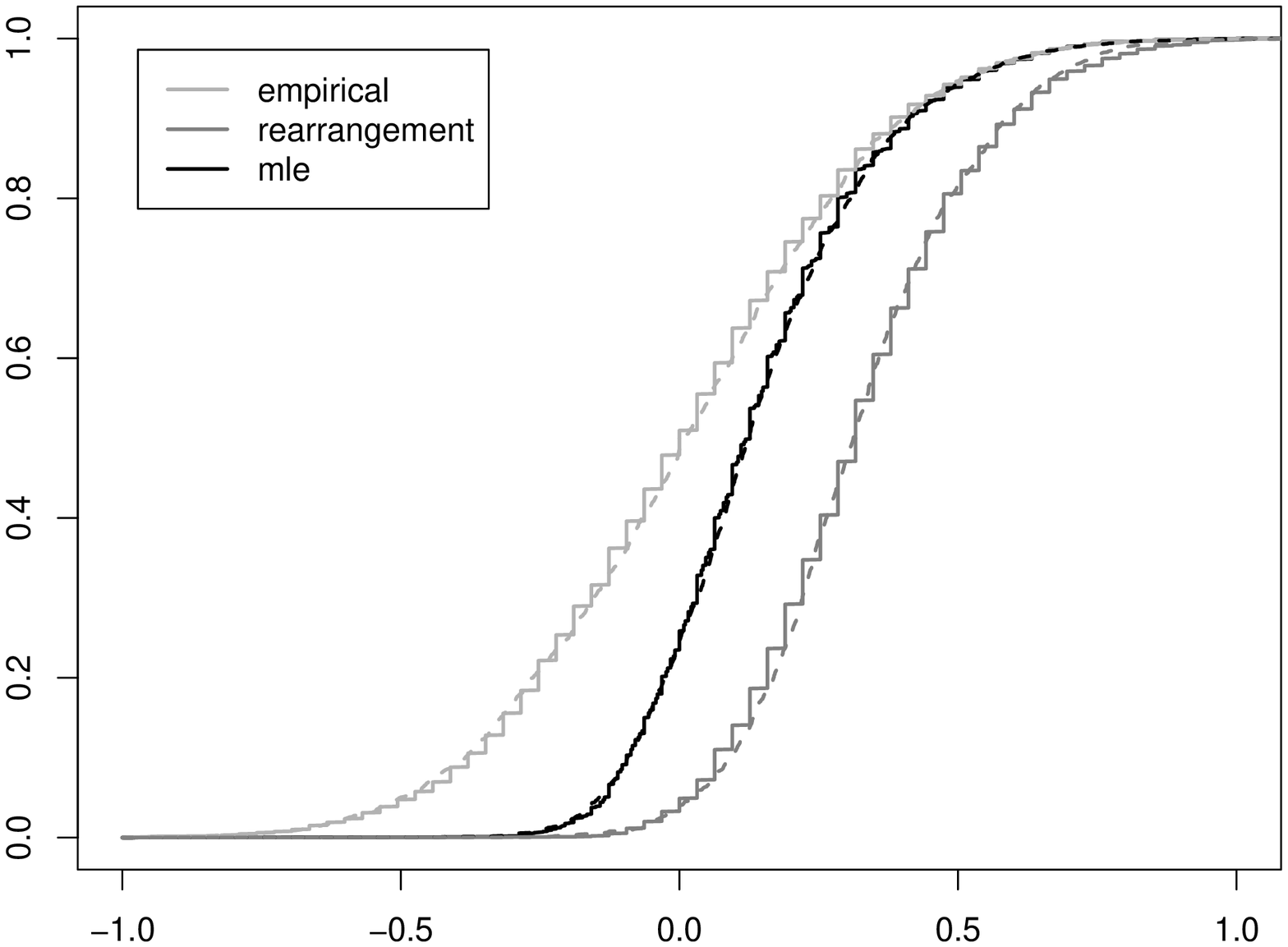}
\includegraphics[width=0.49\textwidth]{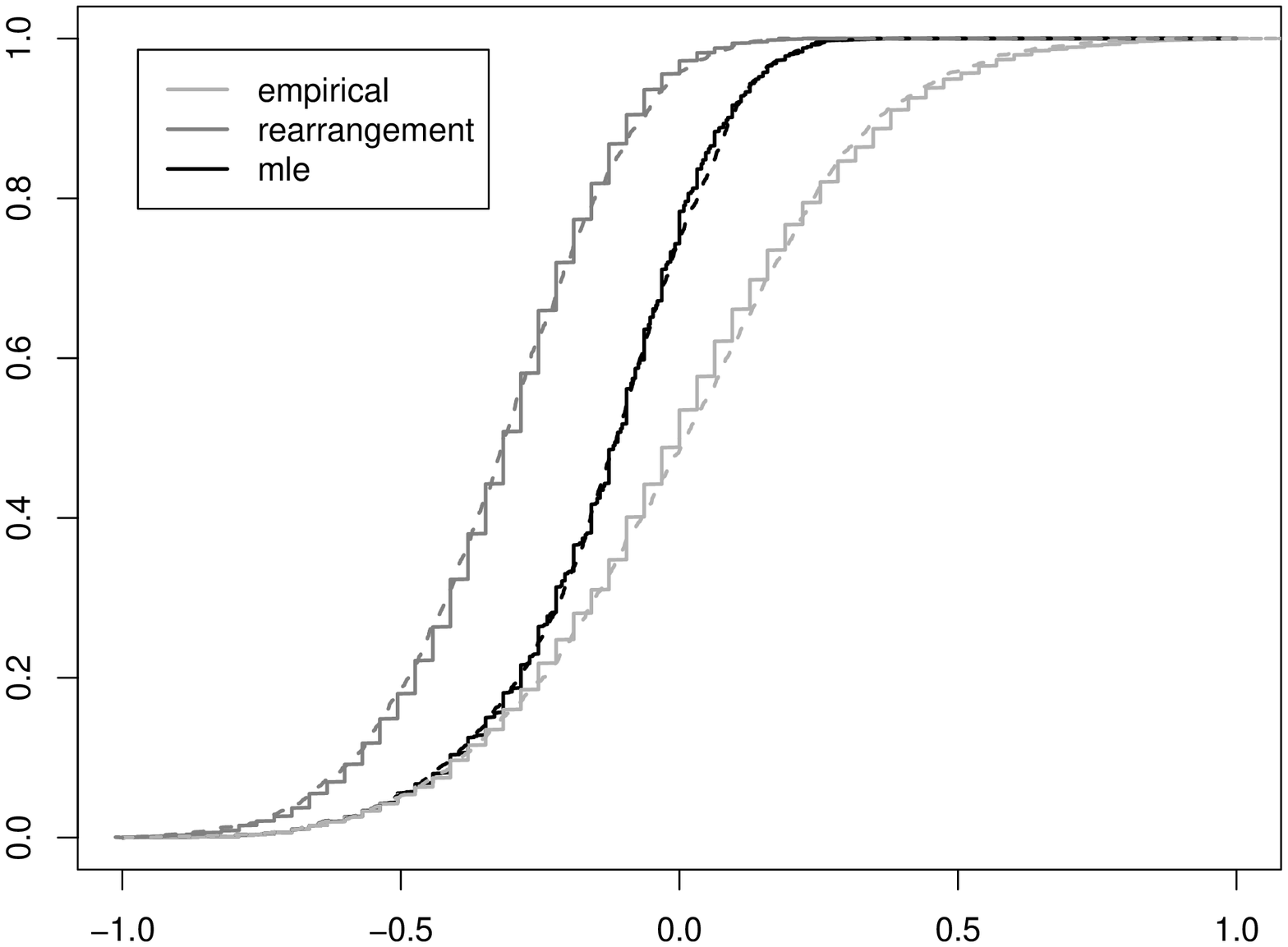}
\caption{The limiting distributions at $x=4$ (left) and at $x=7$ (right) when
$p=0.2 p^{U(3)}+0.8 p^{U(7)}$: the limiting distributions are shown 
(dashed) along with the exact distributions (solid) of $Y_n, Y_n^R, Y_n^G$ for $n=100$ (top) and $n=1000$ (bottom).}
\label{fig:limflat}
\end{figure}

The last statement of the above theorem is the discrete version of the same 
result in the continuous case due to \cite{MR1745821} for a density with locally flat regions.  
Thus, both the discrete and continuous settings have similar behaviour in this situation.  
Figure \ref{fig:limflat} shows the exact and limiting cumulative distribution functions 
when $p=0.2 p^{U(3)}+0.8 p^{U(7)}$ (same as in Figure \ref{fig:box}, top) at locations $x=4$ and $x=7.$
Note the significantly ``more discrete" behaviour of the empirical and 
rearrangement estimators in comparison with the MLE.  
Also note the lack of accuracy in the approximation at $x=4$ when $n=100$ (top left), 
which is more prominent for the rearrangement estimator.  
This occurs because $x=4$ is a boundary point, in the sense that $p_3>p_4$, 
and is therefore least resilient to any global changes in $\widehat p_n$.   
Lastly, note that the distribution functions satisfy  
$F_{Y_4}>F_{Y^G_4}>F_{Y^R_4}$ at $x=4$ while at $x=7$, 
$F_{Y^R_7}>F_{Y^G_7}>F_{Y_7}$.   It is not difficult to see that the 
relationships $Y_4^R\geq Y_4^G\geq Y_4$ and 
$Y^R_7\leq Y^G_7 \leq Y_7$ must hold from the definition of 
$(Y^R)^{(4,7)}=\rear(Y^{(4,7)})$ and $(Y^G)^{(4,7)}=\gren(Y^{(4,7)})$.

\begin{prop}\label{prop:carolandykstra}
Let $\theta = p_r = \ldots = p_s$, and let $\widetilde Y^{(r,s)}$ denote a 
multivariate normal vector with mean zero and variance matrix $\{\sigma_{i,j}\}_{i,j=r}^s$ where
\begin{eqnarray*}
\sigma_{i,j}=\alpha \delta_{i,j}- \alpha ^2,
\end{eqnarray*}
for $\alpha^{-1}=s-r+1.$  Let $Z$ be a standard normal random variable independent of 
$\widetilde Y^{(r,s)}$, and let $\tau = s-r+1$.  Then
\begin{eqnarray*}
\gren(Y^{(r,s)})&=_d& \sqrt{\frac{\theta}{\tau}}\left(\sqrt{1-\theta \tau}\ Z +\tau \ \gren(\widetilde Y^{(r,s)})\right).
\end{eqnarray*}
\end{prop}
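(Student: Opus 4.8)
The plan is to realize the flat-region vector $Y^{(r,s)}$ as an explicit linear combination of the ``standardized'' vector $\widetilde Y^{(r,s)}$ and one independent scalar $Z$, and then to push this representation through $\gren$ using two elementary equivariance properties of that operator.

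\emph{Step 1 (covariance on the flat stretch).} Since $p_r=\cdots=p_s=\theta$, the covariance $\langle \mathbf{S}\,e_{(x)},e_{(x')}\rangle = p_x\delta_{x,x'}-p_xp_{x'}$ restricts on the indices $\{r,\ldots,s\}$ to $\mathrm{Cov}(Y_i,Y_j)=\theta\delta_{i,j}-\theta^2$; as a $\tau\times\tau$ matrix this is $\theta I-\theta^2\mathbf{1}\mathbf{1}^\top$, where $\mathbf{1}$ is the all-ones vector. By definition $\widetilde Y^{(r,s)}$ has covariance $\alpha I-\alpha^2\mathbf{1}\mathbf{1}^\top$ with $\alpha=1/\tau$, so it is exactly the limit vector attached to a uniform distribution on $\tau$ cells.

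\emph{Step 2 (Gaussian decomposition).} I would claim that, with $Z$ standard normal independent of $\widetilde Y^{(r,s)}$,
\begin{eqnarray*}
Y^{(r,s)} &=_d& \sqrt{\theta\tau}\ \widetilde Y^{(r,s)} + \sqrt{\tfrac{\theta}{\tau}\,(1-\theta\tau)}\ Z\,\mathbf{1}.
\end{eqnarray*}
Both sides are mean-zero Gaussian, so this reduces to matching covariances: the right-hand side has covariance $\theta\tau(\alpha I-\alpha^2\mathbf{1}\mathbf{1}^\top)+\tfrac{\theta}{\tau}(1-\theta\tau)\mathbf{1}\mathbf{1}^\top$, and substituting $\alpha=1/\tau$ collapses this to $\theta I-\theta^2\mathbf{1}\mathbf{1}^\top$. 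The radicand $1-\theta\tau$ is nonnegative because $\sum_{i=r}^s p_i=\theta\tau\le 1$.

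\emph{Step 3 (equivariance of $\gren$ and conclusion).} Two facts are needed. First, positive homogeneity $\gren(cw)=c\,\gren(w)$ for $c>0$, which is immediate since scaling the data scales the partial sums, hence the LCM and all its slopes. Second, translation equivariance $\gren(w+b\mathbf{1})=\gren(w)+b\,\mathbf{1}$: adding $b$ to every coordinate adds the affine map $j\mapsto b(j+1)$ to the partial-sum diagram, and the LCM of a diagram plus an affine function is the LCM plus that function, so every slope rises by $b$. Conditioning on $Z$, the term $\sqrt{\tfrac{\theta}{\tau}(1-\theta\tau)}\,Z\,\mathbf{1}$ is a deterministic multiple of $\mathbf{1}$, so applying both properties to the decomposition of Step 2 gives $\gren(Y^{(r,s)})=_d\sqrt{\theta\tau}\,\gren(\widetilde Y^{(r,s)})+\sqrt{\tfrac{\theta}{\tau}(1-\theta\tau)}\,Z\,\mathbf{1}$; factoring out $\sqrt{\theta/\tau}$ and writing $\sqrt{\theta\tau}=\tau\sqrt{\theta/\tau}$ yields the stated identity.

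The only delicate point is the translation equivariance in Step 3: one must check that the anchoring point $(-1,0)$ is left fixed (the added affine function vanishes at $j=-1$), so that the shift $b$ is applied uniformly to all $\tau$ slopes rather than disturbing the endpoints of the diagram. Once that is in hand, the remainder is covariance bookkeeping.
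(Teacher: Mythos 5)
Your proposal is correct and follows essentially the same route as the paper: both rest on the orthogonal Gaussian decomposition of $Y^{(r,s)}$ into the independent pieces $\bar Y\,\mathbf{1}$ (a multiple of the all-ones vector, distributed as $\sqrt{(\theta/\tau)(1-\theta\tau)}\,Z$) and the centered part $\sqrt{\theta\tau}\,\widetilde Y^{(r,s)}$, followed by the affine equivariance $\gren(cw+b\mathbf{1})=c\,\gren(w)+b\mathbf{1}$. The only cosmetic difference is that you verify the decomposition by matching covariance matrices of the increments directly, whereas the paper derives it by writing the partial sums as a Brownian bridge evaluated at $F$ and projecting onto the endpoint $\WW_s$; your observation that the anchor $(-1,0)$ is fixed under the added affine map is the right point to check and is handled correctly.
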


Note that the behaviour of $\gren(Y^{(r,s)})$ and $\gren(\widetilde Y^{(r,s)})$ 
will be quite different since $\sum_{x=r}^s \widetilde Y^{(r,s)}_x=0$ almost surely, but the same is not true for $Y^{(r,s)}$.

\begin{rem}
To match the notation of \cite{MR1745821}, note that $\tau \ \gren(\widetilde Y^{(r,s)})$ is 
equivalent to the left slopes at the points $\{1, \ldots, \tau\}/\tau$ of the least 
concave majorant of standard Brownian bridge at the points $\{0, 1, \ldots, \tau\}/\tau$.  
This random vector most closely matches the left derivative of the least concave majorant 
of the Brownian bridge on $[0,1],$ which is the process that shows up in the limit for the continuous case.
\end{rem}

\subsubsection{When $p$ is strictly monotone at $x$.}

In this situation, the three estimators $\widehat p_{n,x}, \widehat p_{n,x}^R$ 
and $\widehat p_{n,x}^G$ have the same asymptotic behaviour.  This is 
considerably different than what happens for continuous densities, and 
occurs because of the inherent discreteness of the problem for probability mass functions.

\begin{prop}\label{prop:monotone}
Suppose that for some $r,s \in \NN$ with $s-r\geq 0$ the probability mass function
$p$ satisfies $p_{r-1}> p_r > \ldots > p_s > p_{s+1}$.  Then
\begin{eqnarray*}
(Y_n^R)^{(r,s)}&\rightarrow_d& Y^{(r,s)} \ \ \mbox{in} \ \ \RR^{s-r+1},\\
(Y_n^G)^{(r,s)}&\rightarrow_d& Y^{(r,s)} \ \ \mbox{in} \ \ \RR^{s-r+1}.
\end{eqnarray*}
\end{prop}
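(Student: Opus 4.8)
The plan is to show that, with probability tending to one, \emph{both} monotonizing operations leave the coordinates of $\widehat p_n$ on the block $\{r,\ldots,s\}$ unchanged, so that the proposition reduces to the finite-dimensional convergence of the raw empirical fluctuation process. First I would record that $(Y_n)^{(r,s)}\rightarrow_d Y^{(r,s)}$ in $\RR^{s-r+1}$: this is immediate from Theorem~\ref{thm:l2Gaussian} together with continuity of the coordinate projection $\ell_2\to\RR^{s-r+1}$ (equivalently, the multivariate CLT for the cell counts). The remaining work is to produce a single event $A_n$ with $P(A_n)\to 1$ on which $(Y_n^R)^{(r,s)}=(Y_n^G)^{(r,s)}=(Y_n)^{(r,s)}$; the conclusion then follows by Slutsky, since $(Y_n^R)^{(r,s)}-(Y_n)^{(r,s)}$ and $(Y_n^G)^{(r,s)}-(Y_n)^{(r,s)}$ both vanish on $A_n$ and hence converge to $0$ in probability.

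To build $A_n$, set $\delta=\tfrac12\min_{r-1\le x\le s}(p_x-p_{x+1})$, which is strictly positive precisely because the hypothesis $p_{r-1}>p_r>\cdots>p_s>p_{s+1}$ makes every consecutive gap across $\{r-1,\ldots,s+1\}$ strict, and let $A_n=\{\|\widehat p_n-p\|_\infty<\delta\}$. On $A_n$ the empirical masses inherit the strict orderings of $p$: for $r-1\le x\le s$ one checks $\widehat p_{n,x}-\widehat p_{n,x+1}>(p_x-p_{x+1})-2\delta\ge 0$, so the empirical masses are strictly decreasing on $\{r-1,\ldots,s+1\}$. Crucially, the \emph{uniform} bound $\|\widehat p_n-p\|_\infty<\delta$ also controls the entire tail at once: since $p_y\le p_{s+1}$ for every $y>s$, one has $\widehat p_{n,y}<p_{s+1}+\delta\le p_s-\delta<\widehat p_{n,x}$ for every block index $x$, and symmetrically every $\widehat p_{n,y}$ with $y<r$ lies strictly above every block value. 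Because Theorem~\ref{thm:GlobalConsistencyTheorem} gives $\|\widehat p_n-p\|_\infty\to 0$ almost surely, we obtain $P(A_n)\to 1$. (The empirical vector has finite support almost surely, so $\rear$ and $\gren$ are applied to finite vectors.)

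On $A_n$ the two identities are checked separately. For the rearrangement, the facts above show that, for each $x\in\{r,\ldots,s\}$, exactly $x$ coordinates of $\widehat p_n$ strictly exceed $\widehat p_{n,x}$ and none ties it; hence $\widehat p_{n,x}$ is the $(x+1)$-st largest value and $\rear$ sends it to position $x$, i.e. $\widehat p_{n,x}^R=\widehat p_{n,x}$. For the MLE I would use the min--max representation of the antitonic regression (the equivalence to isotonic estimation for multinomial data, noted after the definition of $\gren$, is the content of \cite{MR961262}): $\widehat p^G_{n,x}=\min_{v\le x}\max_{w\ge x}A(v,w)=\max_{w\ge x}\min_{v\le x}A(v,w)$ with $A(v,w)=(w-v+1)^{-1}\sum_{y=v}^{w}\widehat p_{n,y}$. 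Since on $A_n$ every coordinate to the right of $x$ is strictly smaller than $\widehat p_{n,x}$, the maximum $\max_{w\ge x}A(x,w)$ is attained at $w=x$ and equals $\widehat p_{n,x}$, whence $\widehat p^G_{n,x}\le\widehat p_{n,x}$; since every coordinate to the left of $x$ is strictly larger, $\min_{v\le x}A(v,x)=\widehat p_{n,x}$, whence $\widehat p^G_{n,x}\ge\widehat p_{n,x}$. Thus $\widehat p^G_{n,x}=\widehat p_{n,x}$ on the block.

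I expect the main obstacle to be exactly the passage from pointwise to global control: verifying that no coordinate in the (possibly infinite) tail $\{y>s\}$ can intrude on the block's ranks or spoil the local concavity of $\FF_n$. Pointwise consistency of each coordinate cannot rule this out uniformly, and the clean remedy is the uniform $\ell_\infty$ consistency of Theorem~\ref{thm:GlobalConsistencyTheorem}, used together with the genuinely strict boundary drops $p_{r-1}>p_r$ and $p_s>p_{s+1}$ furnished by the hypothesis. Everything else — the finite-dimensional limit and the Slutsky step — is routine.
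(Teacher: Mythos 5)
Your proof is correct, and for the Grenander half it takes a genuinely different route from the paper. For the rearrangement estimator the two arguments coincide: the paper simply says the claim ``follows directly from Theorem~\ref{thm:GlobalConsistencyTheorem} for $k=\infty$,'' which is exactly your event $A_n=\{\|\widehat p_n-p\|_\infty<\delta\}$ made explicit, including the important observation that the uniform bound controls the entire tail so that no coordinate outside the block can change the ranks of $\widehat p_{n,r},\ldots,\widehat p_{n,s}$. For the MLE, however, the paper does \emph{not} argue that $\widehat p^G_{n,x}=\widehat p_{n,x}$ exactly on a high-probability event. Instead it proves the weaker statement $Y^G_{n,x}-Y_{n,x}=o_p(1)$ via the switching relation \eqref{line:switching}: it rewrites $P(Y^G_{n,x}-Y_{n,x}\geq\eps)$ as the probability that the left-most argmax of a localized drifted partial-sum process $\widetilde Z_n$ falls to the right of $-1/2$, and shows the drift term $W_n(h)$ diverges to $-\infty$ off $\{-1,0\}$ while the limit argmax over $\{-1,0\}$ sits at $-1$ because of the $-\eps(h+1)$ tilt. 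Your min--max (equivalently, touchpoint) argument is more elementary, avoids the argmax/continuous-mapping machinery, and yields the stronger conclusion that all three estimators agree coordinatewise on the block with probability tending to one --- which is in fact the heuristic the authors state in Section~\ref{sec:LimitDistributions} before giving their proof. What the paper's switching-relation approach buys is uniformity of technique: the identical localization is what drives the proof of Proposition~\ref{prop:flat} in the flat case, where no exact-equality event exists and the argmax analysis is genuinely needed. One small point to make airtight in your write-up: when you invoke the min--max representation $\widehat p^G_{n,x}=\min_{v\le x}\max_{w\ge x}A(v,w)=\max_{w\ge x}\min_{v\le x}A(v,w)$, you should cite it as the standard isotonic-regression identity from \cite{MR961262} (the paper only records the equivalence of $\widehat p^G_n$ with the isotonic estimator for multinomial data, not the formula itself), and note that the index $v$ ranges over $\{0,\ldots,x\}$ so the choices $v=w=x$ used in your two bounds are always admissible.
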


\begin{rem}
We note that the convergence results of Propositions \ref{prop:flat} 
and \ref{prop:monotone} also hold jointly.  
That is, convergence of the three processes $(Y_n^{(r,s)}, (Y_n^R)^{(r,s)}, (Y_n^G)^{(r,s)} )$ 
may also be proved jointly in $\RR^{3(s-r+1)}.$
\end{rem}

\subsection{Convergence of the Process}

We now strengthen these results to obtain convergence of the processes 
$Y_n^R$ and $Y_n^G$ in $\ell_2$.
  Note that the limit of $Y_n$ has already been stated in Theorem \ref{thm:l2Gaussian}.

\begin{thm}\label{thm:process}
Let $Y$ be the Gaussian process  defined in Theorem \ref{thm:l2Gaussian}, 
with $p$ a monotone decreasing distribution.  
Define $Y^R$ and $Y^G$ as the processes obtained by the following transforms of $Y$: 
for all periods of constancy of $p$, i.e. for all $s\geq r$ with $s-r\geq 1$ 
such that $p_{r-1}>p_r = \ldots = p_x = \ldots = p_s > p_{s+1}$ let
\begin{eqnarray*}
(Y^R)^{(r,s)}&=& \rear(Y^{(r,s)})\\
(Y^G)^{(r,s)}&=& \gren(Y^{(r,s)}).
\end{eqnarray*}
Then $Y_n^R \Rightarrow Y^R$, and $Y_n^G \Rightarrow Y^G$ in $\ell_2.$
\end{thm}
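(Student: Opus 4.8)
The plan is to establish weak convergence in the separable Hilbert space $\ell_2$ through the two standard ingredients: convergence of the finite-dimensional distributions together with asymptotic tightness, the latter reduced to a uniform tail-mass bound via Prokhorov's criterion. Throughout I would use the decomposition of the support of $p$ into the flat blocks $\{r,\ldots,s\}$ with $s-r\geq 1$ on which $p$ is constant, and the points at which $p$ is strictly decreasing.

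First I would assemble the finite-dimensional convergence. Propositions \ref{prop:flat} and \ref{prop:monotone}, applied one block at a time, already identify the limit of each finite group of coordinates: on a flat block the coordinates converge to $\rear(Y^{(r,s)})$ and $\gren(Y^{(r,s)})$, while on strictly decreasing stretches they converge to $Y^{(r,s)}$ itself. The only thing to add is that these convergences hold jointly across distinct blocks, which is the content of the remark following Proposition \ref{prop:monotone} extended to finitely many blocks. Consequently any finite collection of coordinates of $Y_n^R$ (resp. $Y_n^G$) converges in distribution to the corresponding coordinates of $Y^R$ (resp. $Y^G$), giving finite-dimensional convergence and, in particular, finite-dimensional tightness.

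The crux is tightness in $\ell_2$, which amounts to showing, for every $\eps>0$, that $\lim_{N\to\infty}\limsup_{n\to\infty} E[\sum_{x\ge N}(Y_{n,x}^R)^2]=0$ and likewise for $Y_n^G$; the tail-in-probability bound then follows by Markov's inequality. The engine is the basic inequality \eqref{ineq:ellk} with $k=2$, which after multiplication by $n$ reads $||Y_n^R||_2^2 \le ||Y_n||_2^2$ and $||Y_n^G||_2^2 \le ||Y_n||_2^2$ pointwise. Since $E||Y_n||_2^2=\sum_{x\ge 0}p_x(1-p_x)=E||Y||_2^2$ for every $n$, and $||Y_n||_2^2 \rightarrow_d ||Y||_2^2$ by Theorem \ref{thm:l2Gaussian} and the continuous mapping theorem, the family $\{||Y_n||_2^2\}$ is uniformly integrable; hence so are the dominated families $\{||Y_n^R||_2^2\}$ and $\{||Y_n^G||_2^2\}$. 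Writing each tail as total minus head and using the finite-dimensional convergence together with uniform integrability to pass $E[\sum_{x<N}(Y_{n,x}^{R})^2]$ and $E[\sum_{x<N}(Y_{n,x}^{G})^2]$ to their limits, the whole problem reduces to controlling $\limsup_n E||Y_n^R||_2^2$ and $\limsup_n E||Y_n^G||_2^2$.

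For the rearrangement estimator this is immediate, since $Y^R$ is a (block-wise) permutation of $Y$, so $||Y^R||_2^2=||Y||_2^2$ almost surely; the domination then gives $\limsup_n E||Y_n^R||_2^2 \le E||Y||_2^2 = E||Y^R||_2^2$, and with the matching Fatou lower bound from the finite-dimensional convergence one obtains $E||Y_n^R||_2^2 \to E||Y^R||_2^2$ and hence vanishing tails. The hard part will be the Grenander case, where the crude domination only yields $\limsup_n E||Y_n^G||_2^2 \le E||Y||_2^2$, a bound that may strictly exceed the target $E||Y^G||_2^2$ whenever $p$ has flat blocks. What is needed is the sharper estimate $\limsup_n E||Y_n^G||_2^2 \le E||Y^G||_2^2$, i.e. that the global concave-majorant operation does not transport $\ell_2$-energy into the tail. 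I would obtain this from the fact that $\widehat p_n^G=\gren(\widehat p_n)$ is the metric projection of $\widehat p_n$ onto the closed convex cone of decreasing sequences: asymptotically this projection leaves the strictly decreasing coordinates fixed and acts within each flat block, so that the tail of $Y_n^G$ is dominated block by block by the tail of $Y_n$, which is already controlled by uniform integrability. Equivalently, one may write $Y_n^G=\sqrt n(\Pi(p+Y_n/\sqrt n)-\Pi(p))$ for this projection $\Pi$ and invoke the Hadamard directional differentiability of $\Pi$ at $p$, whose derivative is projection onto the tangent cone of sequences that are decreasing within each flat block of $p$, namely the map carrying $Y$ to $Y^G$; making this localization uniform over the infinite tail is the delicate technical point that the proof must resolve.
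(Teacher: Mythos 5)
Your architecture is genuinely different from the paper's: you argue finite-dimensional convergence plus $\ell_2$-tightness via second-moment tail control, whereas the paper defines the blockwise operator $\varphi_p$ (apply $\rear$ or $\gren$ separately on each interval of constancy of $p$), proves it is continuous on $\ell_2$ using \eqref{line:key_ineq2}, applies the continuous mapping theorem to get $\varphi_p(Y_n)\Rightarrow\varphi_p(Y)=Y^{R}$ or $Y^G$, and then shows $\|\sqrt{n}(\varphi(\widehat p_n)-p)-\varphi_p(Y_n)\|_2^2\to 0$ in $L_1$ by a Skorokhod/Fatou argument. For the rearrangement estimator your route closes: domination by $\|Y_n\|_2^2$ from \eqref{ineq:ellk}, uniform integrability, the identity $\|Y^R\|_2=\|Y\|_2$, and the total-minus-head bookkeeping do yield the required vanishing of $\lim_N\limsup_n E[\sum_{x\ge N}(Y^R_{n,x})^2]$.

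For the Grenander estimator, however, there is a genuine gap, and it sits exactly where you flag it. Your scheme needs $\limsup_n E\|Y_n^G\|_2^2\le E\|Y^G\|_2^2$ (or a direct tail bound), but the only quantitative control you actually establish is $\limsup_n E\|Y_n^G\|_2^2\le E\|Y\|_2^2$, and by Proposition \ref{prop:means} the deficit $E\|Y\|_2^2-E\|Y^G\|_2^2$ is strictly positive whenever $p$ has a flat block --- so the total-minus-head argument does not close and no tail bound follows. The assertion that ``the tail of $Y_n^G$ is dominated block by block by the tail of $Y_n$'' is the entire content of the missing step: it requires showing that every point $m$ of strict decrease of $p$ (below a suitable level $K$) is, eventually almost surely, a touchpoint of the least concave majorant of $\FF_n$, so that the \emph{global} $\gren$ operator factorizes across the blocks of constancy of $p$ and the contraction \eqref{line:key_ineq2} can be applied to the tail segment alone. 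That touchpoint localization is precisely what the paper proves (via the uniform bounds \eqref{line:tight1}--\eqref{line:tight2} after a Skorokhod construction), and it cannot be replaced by an appeal to Hadamard directional differentiability of the metric projection onto the monotone cone in $\ell_2$: establishing that differentiability, uniformly over the infinite tail, is essentially equivalent to the theorem you are trying to prove, so the argument as written is circular at its decisive point.
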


The two extreme cases, $p$ strictly monotone decreasing
and $p$ equal to the uniform distribution, may now be
considered as corollaries.  By studying the uniform case, we also study the behaviour of $Y^G$
(via Proposition \ref{prop:carolandykstra}), and therefore we consider this case in detail.

\begin{cor}\label{cor:decrasing}
Suppose that $p$ is strictly monotone decreasing.
That is, suppose that $p_x>p_{x+1}$ for all $x\geq 0$.
Then $Y^R_n \Rightarrow Y$ and $Y^G_n \Rightarrow Y$ in $\ell_2$.
\end{cor}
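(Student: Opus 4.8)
The plan is to deduce the corollary directly from Theorem~\ref{thm:process}. Recall that in that theorem the limit processes $Y^R$ and $Y^G$ are built from $Y$ by applying $\rear$ and $\gren$ \emph{only} on the flat stretches of $p$, namely on those index blocks $(r,s)$ with $s-r\ge 1$ for which $p_{r-1}>p_r=\cdots=p_s>p_{s+1}$; on every coordinate not belonging to such a block the transforms leave $Y$ unchanged. So the entire task reduces to identifying what these transforms do under the strict monotonicity hypothesis.

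First I would observe that the assumption $p_x>p_{x+1}$ for all $x\ge 0$ forbids any two consecutive coordinates of $p$ from being equal. Consequently there is no pair $(r,s)$ with $s-r\ge 1$ satisfying $p_r=\cdots=p_s$, so the collection of flat stretches appearing in Theorem~\ref{thm:process} is empty. (In particular the support is all of $\NN$, and $Y$ still lives in $\ell_2$ by the trace bound $\sum_{x\ge0}p_x(1-p_x)<\infty$ noted before Theorem~\ref{thm:l2Gaussian}.) Since $\rear$ and $\gren$ are being applied to no block at all, the defining relations reduce to the identity on every coordinate, whence $Y^R=Y$ and $Y^G=Y$ as elements of $\ell_2$.

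Substituting these identifications into the conclusions $Y_n^R\Rightarrow Y^R$ and $Y_n^G\Rightarrow Y^G$ of Theorem~\ref{thm:process} gives $Y_n^R\Rightarrow Y$ and $Y_n^G\Rightarrow Y$ in $\ell_2$, as claimed. The only thing that needs checking is the elementary point that on a single coordinate both $\rear$ and $\gren$ act as the identity map, so that ``no flat stretch'' genuinely means ``no transformation''; there is essentially no serious obstacle, since all of the analytic work has already been carried out at the process level in Theorem~\ref{thm:process}. Should one prefer a route that does not quote the full process theorem, the pointwise statements of Proposition~\ref{prop:monotone}---each $x$ sits in a strictly decreasing block, so the corresponding coordinates of $(Y_n^R)^{(r,s)}$ and $(Y_n^G)^{(r,s)}$ converge in distribution to those of $Y^{(r,s)}$---combined with the $\ell_2$-tightness established in the proof of Theorem~\ref{thm:process} would yield the same conclusion, but invoking Theorem~\ref{thm:process} directly is cleanest.
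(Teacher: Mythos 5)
Your proposal is correct and matches the paper's approach: the authors state that Corollary~\ref{cor:decrasing} is an obvious consequence of Theorem~\ref{thm:process}, precisely because strict monotonicity leaves no flat stretches on which $\rear$ or $\gren$ act, so $Y^R=Y^G=Y$. Your write-up simply makes this explicit.
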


\subsubsection{The Uniform Distribution}

Here, the limiting distribution $Y$ is a vector of length $y+1$ having a
multivariate normal distribution with  $E[Y_x]=0$ and
$\cov(Y_x, Y_z)= (y+1)^{-1}\delta_{x,z}-(y+1)^{-2}$.

\begin{cor} \label{cor:uniformprocess}
Suppose that $p$ is the uniform probability mass function on $\{ 0, \ldots , y \}$,
where $y \in \NN$.  Then $Y_n^R \rightarrow_d \rear(Y)$ and $Y_n^G \rightarrow_d \gren(Y)$.
\end{cor}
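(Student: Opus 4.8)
The plan is to read the corollary off Theorem~\ref{thm:process} by specializing to the degenerate case in which the entire support of $p$ forms a single flat block, and then to reduce the $\ell_2$ process convergence there to ordinary convergence in distribution in $\RR^{y+1}$. First I would record that under the uniform law on $\{0,\ldots,y\}$ every observation satisfies $X_i\in\{0,\ldots,y\}$, so $\widehat p_{n,x}=0$ for $x>y$ almost surely; the same then holds for $\widehat p_n^R$ and $\widehat p_n^G$, whose supports lie in $\{0,\ldots,\max_i X_i\}\subseteq\{0,\ldots,y\}$. Hence $Y_n$, $Y_n^R$, $Y_n^G$ all vanish off $\{0,\ldots,y\}$ and may be viewed as random vectors in $\RR^{y+1}$, where weak convergence in $\ell_2$ is exactly $\rightarrow_d$ in $\RR^{y+1}$.

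Next I would observe that $p_0=\cdots=p_y=1/(y+1)$ constitutes one period of constancy in the sense of Theorem~\ref{thm:process}, taken with $r=0$ and $s=y$: the left endpoint $r=0$ is the boundary of the support (no $p_{-1}$ constraint is required) and the right endpoint satisfies $p_y>p_{y+1}=0$, so there are no strictly decreasing indices to treat separately. By the remark following Theorem~\ref{thm:l2Gaussian} the limit is the length-$(y+1)$ Gaussian vector $Y=\{Y_x\}_{x=0}^y$, and its covariance $p_x\delta_{x,z}-p_xp_z$ reduces to $(y+1)^{-1}\delta_{x,z}-(y+1)^{-2}$, matching the display preceding the corollary. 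Applying Theorem~\ref{thm:process} with this single block gives $Y^R=\rear(Y^{(0,y)})=\rear(Y)$ and $Y^G=\gren(Y^{(0,y)})=\gren(Y)$, so that $Y_n^R\Rightarrow\rear(Y)$ and $Y_n^G\Rightarrow\gren(Y)$ in $\ell_2$. Combined with the dimension reduction of the first step, this is precisely $Y_n^R\rightarrow_d\rear(Y)$ and $Y_n^G\rightarrow_d\gren(Y)$.

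There is essentially no hard step: the corollary is a clean specialization, and the only things to verify are the boundary conventions that make $\{0,\ldots,y\}$ a legitimate flat block and the harmless passage from $\ell_2$ to $\RR^{y+1}$. If instead one wants a self-contained argument that does not invoke Theorem~\ref{thm:process}, I would start from $Y_n\rightarrow_d Y$ in $\RR^{y+1}$ (Theorem~\ref{thm:l2Gaussian}, again using finiteness of the support) and apply the continuous mapping theorem. Since $p$ is the constant vector $(y+1)^{-1}\mathbf{1}$ on its support and both $\rear$ and $\gren$ are equivariant under the affine map $w\mapsto aw+b\mathbf{1}$ for $a>0$, one obtains the exact identities $Y_n^R=\rear(Y_n)$ and $Y_n^G=\gren(Y_n)$; continuity of $\rear$ (sorting) and of $\gren$ then yields the two limits directly. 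The one point deserving a word of justification in this alternative is the continuity of $\gren$, which follows because the least concave majorant operator is $1$-Lipschitz in the sup norm and the slope map is a fixed linear combination of the majorant's values, so their composition is continuous on $\RR^{y+1}$.
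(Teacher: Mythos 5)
Your proposal is correct and follows the same route as the paper, which simply notes that this corollary (together with Corollary~\ref{cor:decrasing}) is an immediate consequence of Theorem~\ref{thm:process}; you have merely spelled out the specialization to a single flat block $\{0,\ldots,y\}$ and the harmless identification of $\ell_2$ convergence with convergence in $\RR^{y+1}$ on a finite support. The alternative continuous-mapping argument you sketch is a valid bonus but is not needed.
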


\begin{figure}[htb!]
\begin{center}
\includegraphics[width=0.9\textwidth]{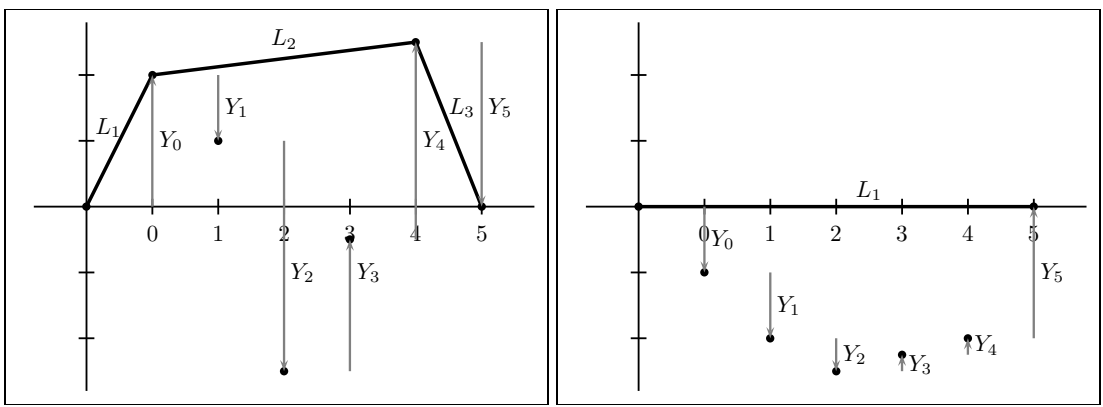}
\caption{The relationship between the limiting process $Y$ and the least
concave majorant of its partial sums for the uniform distribution on
$\{0, \ldots, 5\}$.  Left: the slopes of the lines $L_1, L_2$ and $L_3$
give the values $\gren(Y)_0$, $\gren(Y)_1 = \ldots = \gren(Y)_4$
and $\gren(Y)_5$, respectively. Right: the discrete Brownian bridge
lies entirely below zero.  Therefore, its LCM is zero, and also
$\gren(Y)\equiv0$.
This event occurs with positive probability (see also Figure \ref{fig:mle_lim}).}
\label{fig:lcm}
\end{center}
\end{figure}

The limiting process $\gren(Y)$ may also be described as follows.
Let $\UU(\cdot)$ denote the standard Brownian bridge process
on $[0,1]$, and write $U_k = \sum_{j=0}^k Y_j$ for $k=-1, \ldots, y$.
Then we have equality in distribution of
\begin{eqnarray*}
U = \{U_{-1}, U_0, \ldots, U_{y-1}, U_y\} 
\stackrel{d}{=} \left\{\UU\left(\frac{k+1}{y+1}\right): k= -1, \ldots, y\right\}.
\end{eqnarray*}
In particular we have that $U_{-1}=U_y = \sum_{j=0}^y Y_j=0.$
Thus, the process $U$ is a discrete analogue of the Brownian bridge, and 
$\gren(Y)$ is the vector of (left) derivatives of the least concave majorant 
of $\{ (j, U_j): \ j = -1, \ldots , y \}$.
Figure \ref{fig:lcm} illustrates two different realizations of the processes $Y$ and $\gren(Y).$

\begin{rem}\label{rem:prob_equal}
Note that if the discrete Brownian Bridge is itself convex,
then the limits $Y, \rear(Y)$ and $\gren(Y)$ will be equivalent.
This occurs with probability
\begin{eqnarray*}
P\left(Y\equiv \rear(Y)\equiv \gren(Y)\right)=\frac{1}{(y+1)!}.
\end{eqnarray*}
The result matches that in part (iii) of Theorem \ref{thm:BasicInequalities}.
\end{rem}

\begin{figure}[htb!]
\begin{center}
\includegraphics[width=0.64\textwidth, height=0.3\textheight]{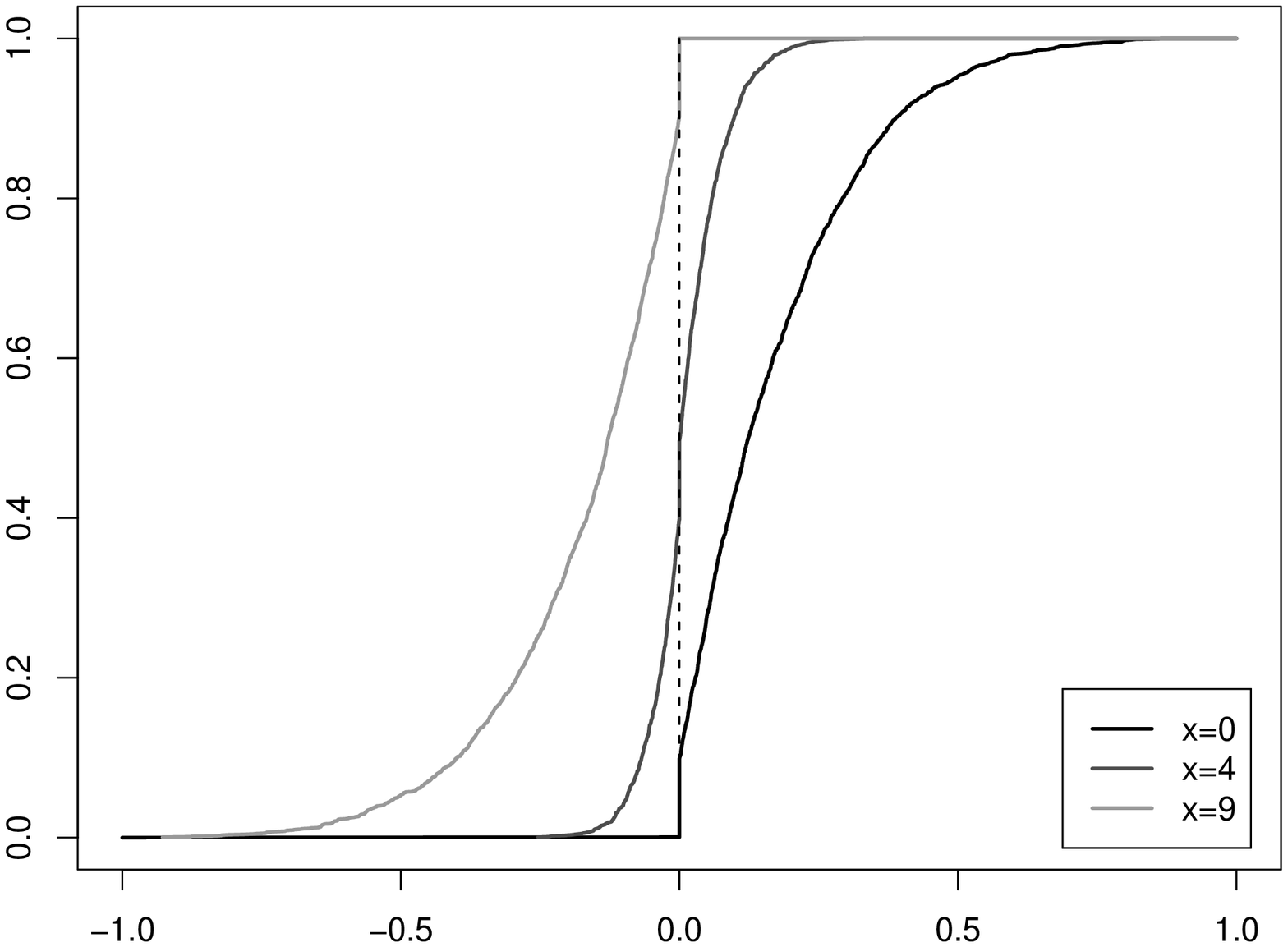}
\includegraphics[width=0.34\textwidth, height=0.3\textheight]{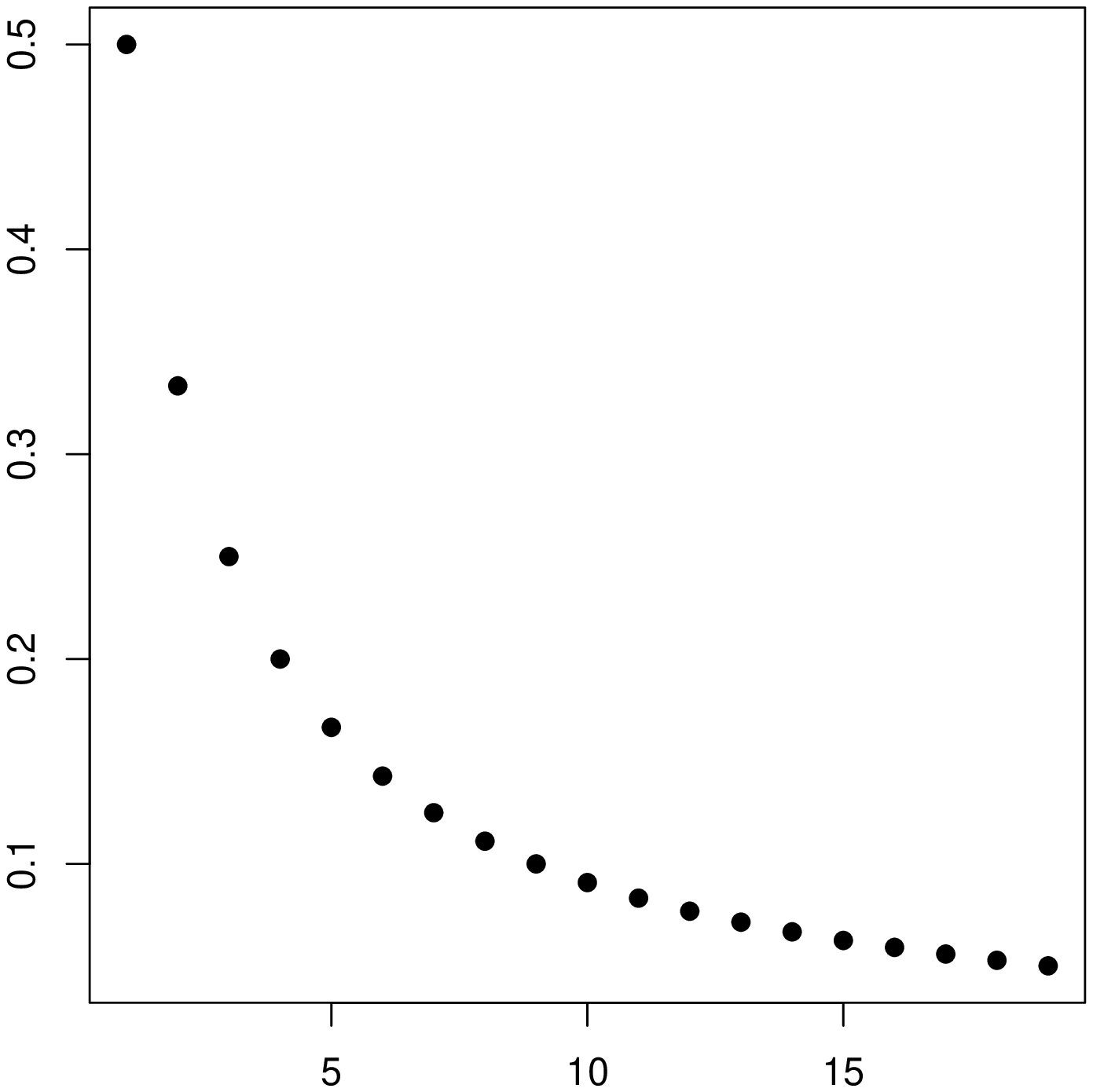}
\caption{Limiting distribution of the MLE  for the uniform case with $y=9$:
marginal cumulative distribution functions at $x=0,4,9$ (left).
The probability that $\gren(Y)\equiv0$ is plotted for different
values of $y$ (right).  For $y=9$, it is equal to 0.0999.}
\label{fig:mle_lim}
\end{center}
\end{figure}

Figure \ref{fig:mle_lim} examines the behaviour of the limiting
distribution of the MLE for several values of $x$.   Since this is
found via the LCM of the discrete Brownian bridge, it maintains
the monotonicity property in the limit:  that is, $\gren(Y)_x \geq \gren(Y)_{x+1}$.
This can easily be seen by examining the marginal distributions of
$\gren(Y)$ for different values of $x$ (Figure \ref{fig:mle_lim}, left).
For each $x$, there is a positive probability that $\gren(Y)_x=0$.
This occurs if the discrete Brownian bridge lies entirely below zero
and then the least concave majorant is identically zero, in which case
$\gren(Y)_x=0$ for all $x=0, \ldots, y$ (as in Figure \ref{fig:lcm}, right).
The probability of this event may be calculated exactly using the distribution
function of the multivariate normal.  Figure \ref{fig:mle_lim} (right),
shows several values for different $y$.

\section{Limiting distributions for the  metrics}
\label{sec:limitdistributions_metrics}

In the previous section we obtained asymptotic distribution
results for the three estimators.  To compare the estimators,
we need to also consider convergence of the Hellinger and $\ell_k$ metrics.
Our results show that $\widehat p_n^R$ and $\widehat p_n$ are asymptotically
equivalent (in the sense that the metrics have the same limit).
The MLE is also asymptotically equivalent, but if and only if $p$ is strictly monotone.
If $p$ has any periods of constancy, then the MLE has better asymptotic behaviour.
Heuristically, this happens because, by definition, $Y^G$ is a sequence of local
averages of $Y$, and averages have smaller variability.  Furthermore, the more
and larger the periods of constancy, the better the MLE performs, see, in particular,
Proposition \ref{prop:means} below.  These results quantify, for large sample
size, the observations of Figure \ref{fig:box}.

The rate of convergence of the $\ell_2$ metric is an immediate consequence
of Theorem~\ref{thm:process}.  Below, the notation $Z_1 \leq_S Z_2$ denotes
stochastic ordering: i.e. $P(Z_1>x)\leq P(Z_2>x)$ for all $x\in \RR$ (the ordering
\ is strict if both inequalities are replaced with strict inequalities).

\begin{cor}\label{cor:ell2}
Suppose that $p$ is a monotone decreasing distribution.  Then, for any $2 \leq k \leq \infty$,
\begin{eqnarray*}
\sqrt{n}||\widehat p_n -p||_k = ||Y_n||_k &\rightarrow_d& ||Y||_k,\\
\sqrt{n}||\widehat p_n^R -p||_k = ||Y_n^R||_k &\rightarrow_d& ||Y||_k,\\
\sqrt{n}||\widehat p_n^G -p||_k = ||Y_n^G||_k &\rightarrow_d& ||Y^G||_k\leq_S ||Y||_k.
\end{eqnarray*}
If $p$ is not strictly monotone, then $\leq_S$ may be replaced with $<_S$.
The above convergence also holds in expectation
(that is, $E[||Y_n||_k^k]\rightarrow E[||Y||_k^k ]$ and so forth).  Furthermore,
\begin{eqnarray*}
E\left[||Y^G||^2_2\right]\leq E\left[||Y||^2_2\right]=\sum_{x\geq 0} p_x(1-p_x),
\end{eqnarray*}
with equality if and only if $p$ is strictly monotone.
\end{cor}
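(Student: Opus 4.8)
The plan is to derive the corollary from the process-level convergence already established, handling the distributional limits by the continuous mapping theorem and upgrading to convergence of moments separately. First I would observe that for $2\le k\le\infty$ the functional $w\mapsto\|w\|_k$ is $1$-Lipschitz from $(\ell_2,\|\cdot\|_2)$ to $\RR$: since $\|w\|_k\le\|w\|_2$ one has $|\,\|w\|_k-\|w'\|_k\,|\le\|w-w'\|_k\le\|w-w'\|_2$. Applying the continuous mapping theorem to $Y_n\Rightarrow Y$ (Theorem~\ref{thm:l2Gaussian}) and to $Y_n^R\Rightarrow Y^R$, $Y_n^G\Rightarrow Y^G$ (Theorem~\ref{thm:process}) yields $\|Y_n\|_k\rightarrow_d\|Y\|_k$, $\|Y_n^R\|_k\rightarrow_d\|Y^R\|_k$, and $\|Y_n^G\|_k\rightarrow_d\|Y^G\|_k$. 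To see that the first two limits are both $\|Y\|_k$, I would note that $Y^R$ is obtained from $Y$ by permuting coordinates within each flat block and leaving the strictly decreasing coordinates fixed; since $\|\cdot\|_k$ is permutation invariant, $\|Y^R\|_k=\|Y\|_k$ pointwise, hence in distribution.

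For the stochastic ordering I would use that on each flat block $\gren$ is a block-averaging operator: the left slopes of the least concave majorant are constant on each solution block and equal to the average of the corresponding coordinates of $Y$. For a fixed block partition this is the conditional expectation $w\mapsto E[w\mid\mathcal B]$, so Jensen's inequality gives $\|\gren(Y^{(r,s)})\|_k\le\|Y^{(r,s)}\|_k$ for every $1\le k\le\infty$; together with $Y^G=Y$ on the strictly decreasing coordinates this gives $\|Y^G\|_k\le\|Y\|_k$ almost surely, and therefore $\|Y^G\|_k\le_S\|Y\|_k$. If $p$ is not strictly monotone there is a flat block of length at least two, on which the event that the nondegenerate Gaussian vector $Y^{(r,s)}$ is already decreasing has probability strictly between $0$ and $1$; on its complement $\gren$ strictly lowers the norm, so $\|Y^G\|_k<\|Y\|_k$ on a set of positive probability, and a short continuity argument upgrades $\le_S$ to $<_S$.

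The convergence in expectation I would obtain by uniform integrability. Because $k\ge2$, each of $\|Y_n\|_k$, $\|Y_n^R\|_k$, $\|Y_n^G\|_k$ is dominated by $\|Y_n\|_2$: the first by $\|\cdot\|_k\le\|\cdot\|_2$, and the last two by inequality \eqref{ineq:ellk} with $k=2$ in Theorem~\ref{thm:BasicInequalities}. Hence it suffices to prove that $\{\|Y_n\|_2^{k}\}_n$ is uniformly integrable, for which I would establish $\sup_n E[\|Y_n\|_2^{k+\eps}]<\infty$ for some $\eps>0$. Writing $\|Y_n\|_2^2=n^{-1}\sum_{i,j}\sum_x(1_{\{X_i=x\}}-p_x)(1_{\{X_j=x\}}-p_x)$ reduces the task to moment bounds for a $V$-statistic with a bounded kernel, which follow from standard moment inequalities together with the summability encoded in $\mathrm{trace}\,\mathbf{S}=\sum_x p_x(1-p_x)<\infty$. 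This uniform moment estimate is the main obstacle: it is the only step that does not follow softly from the established weak convergence, and the bound must be shown uniform over the infinitely many coordinates. Granting it, uniform integrability combined with the distributional limits gives $E[\|Y_n\|_k^k]\to E[\|Y\|_k^k]$ and the analogous statements for $Y_n^R$ and $Y_n^G$.

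For the final displayed $\ell_2$ inequality I would argue exactly, avoiding uniform integrability. Since $\gren(Y^{(r,s)})$ is the Euclidean projection of $Y^{(r,s)}$ onto the closed convex cone of decreasing vectors, the Pythagorean identity for such projections gives, block by block, $\|Y^G\|_2^2=\|Y\|_2^2-\sum_{(r,s)}\|Y^{(r,s)}-\gren(Y^{(r,s)})\|_2^2$, the sum running over flat blocks. Taking expectations yields $E[\|Y^G\|_2^2]\le E[\|Y\|_2^2]=\mathrm{trace}\,\mathbf{S}=\sum_{x\ge0}p_x(1-p_x)$. Equality forces each correction term to vanish almost surely; on a nondegenerate flat block of length at least two this is impossible, so equality holds exactly when $p$ has no flat block, i.e. when $p$ is strictly monotone (in which case $Y^G=Y$ and equality is immediate).
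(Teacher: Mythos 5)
Your argument for the distributional limits, the identity $\|Y^R\|_k=\|Y\|_k$ by permutation invariance, the almost-sure inequality $\|Y^G\|_k\leq\|Y\|_k$ via blockwise Jensen, and the strictness when $p$ has a flat stretch all coincide with the paper's proof, which treats $k=2$ and remarks that $k>2$ follows from $\|\cdot\|_k\leq\|\cdot\|_2$. Your last step also differs only cosmetically: the paper deduces $E[\|Y^G\|_2^2]\leq E[\|Y\|_2^2]$ directly from the stochastic ordering, whereas you use the Pythagorean identity for projection onto the antitonic cone; the latter is a legitimate (and in fact sharper) route, since it is exactly this identity that underlies Proposition~\ref{prop:means}.

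The one real divergence is the convergence of expectations, and there the step you single out as ``the main obstacle'' --- a uniform bound $\sup_n E[\|Y_n\|_2^{k+\eps}]<\infty$ --- is both harder than you suggest and avoidable. It is harder because the naive term-by-term bound via Minkowski's inequality in $L^{m}$ would require $\sum_x p_x^{1/m}<\infty$, which fails for general summable monotone $p$; one must instead exploit the multinomial covariance structure (e.g.\ that the Hilbert--Schmidt norm of $\mathbf S$ is controlled by its trace), so as written the claim is not yet a proof. It is avoidable because for $k=2$ one has the exact identity $E[\|Y_n\|_2^2]=\sum_{x\geq0}p_x(1-p_x)=E[\|Y\|_2^2]$ for \emph{every} $n$, so no uniform integrability is needed for the empirical process at all; and for $Y_n^R$ and $Y_n^G$ the pointwise domination $\|Y_n^G\|_2\leq\|Y_n\|_2$ from \eqref{ineq:ellk}, combined with $\|Y_n\|_2^2\rightarrow_d\|Y\|_2^2$ and convergence of the dominating expectations, yields $E[\|Y_n^G\|_2^2]\rightarrow E[\|Y^G\|_2^2]$ by the generalized dominated convergence (Pratt) argument the paper invokes. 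If you want the expectation statement for $k>2$ your moment-bound route is one way to get it, but you should either carry out the multinomial moment computation or reduce to the $k=2$ case as the paper does.
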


Convergence of the other two metrics is not as immediate,
and depends on the tail behaviour of the distribution $p$.

\begin{cor}\label{cor:ell1}
Suppose that $p$ is such that $\sum_{x\geq 0}\sqrt{p_x}<\infty$.  Then
\begin{eqnarray*}
\sqrt{n}||\widehat p_n -p||_1 = ||Y_n||_1 &\rightarrow_d& ||Y||_1,\\
\sqrt{n}||\widehat p_n^R -p||_1 = ||Y_n^R||_1 &\rightarrow_d& ||Y||_1,\\
\sqrt{n}||\widehat p_n^G -p||_1 = ||Y_n^G||_1 &\rightarrow_d& ||Y^G||_1 \leq _S ||Y||_1.
\end{eqnarray*}
If $p$ is not strictly monotone, then $\leq_S$
may be replaced with $<_S$.  The above convergence also holds in expectation, and
\begin{eqnarray*}
E[||Y^G||_1]\leq E[||Y||_1]= \sqrt{\frac{2}{\pi}}\sum_{x\geq 0} \sqrt{p_x(1-p_x)},
\end{eqnarray*}
 with equality if and only if  $p$ is strictly monotone.
\end{cor}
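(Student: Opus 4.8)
The plan is to prove each of the three convergences by a truncation argument that upgrades the finite-dimensional convergence already available from Theorem~\ref{thm:l2Gaussian} and Theorem~\ref{thm:process} to convergence of the full $\ell_1$ norm, and then to promote convergence in distribution to convergence in expectation by uniform integrability. For a threshold $M$ write $||\cdot||_{1,M}$ for the truncated norm $\sum_{x\le M}|\cdot|$ and note that $y\mapsto ||y||_{1,M}$ is continuous on $\ell_2$; hence the continuous mapping theorem gives $||Y_n^\bullet||_{1,M}\to_d ||Y^\bullet||_{1,M}$ for $\bullet\in\{\,\cdot\,,R,G\}$, where $Y^{\,\cdot}=Y$. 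The assumption $\sum_{x\ge0}\sqrt{p_x}<\infty$ enters precisely to make the limit finite: since $Y_x\sim N(0,p_x(1-p_x))$, a half-normal computation gives $E|Y_x|=\sqrt{2/\pi}\,\sqrt{p_x(1-p_x)}$ and hence $E||Y||_1=\sqrt{2/\pi}\sum_x\sqrt{p_x(1-p_x)}\le\sqrt{2/\pi}\sum_x\sqrt{p_x}<\infty$. Everything then reduces to the uniform tail estimate $\lim_{M\to\infty}\limsup_{n\to\infty}E\big[\sum_{x>M}|Y_{n,x}^\bullet|\big]=0$, which simultaneously closes the truncation (by Markov's inequality) and supplies the uniform integrability needed for the expectation statements.

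For the empirical process this tail estimate is immediate: since $E[Y_{n,x}^2]=p_x(1-p_x)$ we have $E|Y_{n,x}|\le\sqrt{p_x}$, so $E[\sum_{x>M}|Y_{n,x}|]\le\sum_{x>M}\sqrt{p_x}$, which tends to $0$ as $M\to\infty$ uniformly in $n$. This already yields $||Y_n||_1\to_d||Y||_1$ together with $E||Y_n||_1\to E||Y||_1$. The rearrangement case can then be reduced to the empirical one at essentially no cost, using two observations: first, $||Y_n^R||_1\le||Y_n||_1$ pointwise by the $k=1$ case of \eqref{ineq:ellk} in Theorem~\ref{thm:BasicInequalities}; and second, since $\rear$ merely permutes the coordinates inside each flat block and acts as the identity elsewhere, the limit satisfies $||Y^R||_1=||Y||_1$. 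Combining the truncated convergence of $||Y_n^R||_{1,M}$ (together with the domination, which makes these truncations uniformly integrable) with $\lim_nE||Y_n||_1=E||Y||_1$ shows $\limsup_nE[\sum_{x>M}|Y_{n,x}^R|]\le E||Y||_1-E||Y^R||_{1,M}=E[\sum_{x>M}|Y_x^R|]$, and the right-hand side vanishes as $M\to\infty$. Hence $||Y_n^R||_1\to_d||Y||_1$ with convergence of means.

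The Grenander case is the main obstacle, because $\gren$ does not preserve the $\ell_1$ norm, so the clean reduction above breaks down: one only obtains $||Y^G||_1\le_S||Y||_1$, and the excess $E||Y||_1-E||Y^G||_1$ does not vanish with $M$. A genuine uniform tail bound for $Y_n^G$ is therefore needed. The mechanism I would exploit is that $\widehat p_{n,x}^G$ is a local average: on each block $B$ selected by the LCM, $\widehat p_{n,x}^G=|B|^{-1}\sum_{j\in B}\widehat p_{n,j}$. If $B$ is contained in a flat stretch of $p$ on which $p_j\equiv c$, then $|Y_{n,x}^G|=\sqrt n\,\big|\,|B|^{-1}\sum_{j\in B}(\widehat p_{n,j}-c)\big|\le|B|^{-1}\sum_{j\in B}|Y_{n,j}|$ by convexity, so $\sum_{x\in B}|Y_{n,x}^G|\le\sum_{j\in B}|Y_{n,j}|$; that is, within a flat region the Grenander tail is dominated by the empirical tail over the \emph{same} indices. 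By Proposition~\ref{prop:monotone} the LCM asymptotically has touchpoints at every point where $p$ is strictly decreasing, so, with probability tending to one, no block straddles a strict-decrease boundary, and the only blocks are singletons or subsets of flat regions. Since $\sum_x\sqrt{p_x}<\infty$ forbids infinite flat regions, the flat region containing $M$ has a left endpoint $r(M)\to\infty$ as $M\to\infty$; the block-averaging domination then bounds $\sum_{x>M}|Y_{n,x}^G|$ by $\sum_{j>r(M)}|Y_{n,j}|$ up to a boundary-straddling remainder that is negligible in probability as $n\to\infty$. Taking expectations and using the empirical tail bound gives $\limsup_nE[\sum_{x>M}|Y_{n,x}^G|]\le\sum_{j>r(M)}\sqrt{p_j}+o(1)\to0$ as $M\to\infty$. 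The careful bookkeeping of the finite-$n$ straddling blocks, and the verification that their total $\ell_1$ contribution is $o_P(1)$, is where most of the work lies.

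Finally, the comparison statements follow by passing the pointwise inequality $||Y_n^G||_1\le||Y_n||_1$ (Theorem~\ref{thm:BasicInequalities}) to the limit: once both sides converge in distribution, $||Y^G||_1\le_S||Y||_1$ and, by the established convergence of means, $E||Y^G||_1\le E||Y||_1$. If $p$ is strictly monotone there are no flat regions, $Y^G=Y$, and equality holds throughout; if $p$ has a flat block of length at least two, then $\gren$ strictly contracts the $\ell_1$ norm on that block with positive probability (cf.\ Proposition~\ref{prop:carolandykstra}), upgrading $\le_S$ to $<_S$ and the mean inequality to a strict one.
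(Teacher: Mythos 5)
Your overall strategy (truncate, prove a uniform expected tail bound, then use uniform integrability for the expectation statements) is sound, and your treatment of the empirical case coincides with the paper's (which verifies exactly your two tail conditions via Lemma~\ref{lem:tight}, or alternatively quotes the Borisov--Durst theorem). Your rearrangement argument is a genuinely different and rather elegant shortcut: instead of re-running the process-convergence proof for $Y_n^R$ in $\ell_1$, you squeeze the tail of $Y_n^R$ between $E\|Y_n\|_1\to E\|Y\|_1$ and the truncated limit, using the exact identity $\|Y^R\|_1=\|Y\|_1$. That works, and it is cheaper than the paper's route (the paper simply asserts that the $\ell_1$ process convergence for both $Y_n^R$ and $Y_n^G$ is ``identical to the proof of Theorem~\ref{thm:process}'').

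The gap is in the Grenander case, precisely where you say ``most of the work lies.'' Your within-block convexity bound $\sum_{x\in B}|Y^G_{n,x}|\le\sum_{j\in B}|Y_{n,j}|$ only applies to LCM blocks $B$ contained in a flat stretch of $p$; for a block straddling a strict-decrease boundary one has $Y^G_{n,x}=|B|^{-1}\sum_{j\in B}Y_{n,j}+\sqrt{n}\bigl(|B|^{-1}\sum_{j\in B}p_j-p_x\bigr)$, and the deterministic second term carries a factor $\sqrt{n}$. Such blocks occur far out in the tail with non-negligible probability at every finite $n$ (near and beyond the largest observation the LCM necessarily has long segments), so the straddling remainder is not controlled by ``negligible in probability'': you need $\lim_M\limsup_n E[\sum_{x>M}|Y^G_{n,x}|]=0$, and passing an $o_P(1)$ statement through the expectation of a quantity that scales like $\sqrt{n}$ on a small event is exactly the kind of step that fails without a domination argument. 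Moreover, Proposition~\ref{prop:monotone} gives touchpoints at each \emph{fixed} strict-decrease site, not uniformly over the infinitely many sites in the tail. The tool that closes this is part~2 of Lemma~\ref{lem:key_ineq} with $\Psi(t)=|t|$: once a single point $K\in\mc T_p$ is a touchpoint of the LCM (which happens for all large $n$ almost surely, as shown in the proof of Theorem~\ref{thm:process}), the estimator restricted to $\{K+1,K+2,\dots\}$ is the Grenander operator applied to $\widehat p_n^{(K+1,\infty)}$, and the contraction inequality gives $\sum_{x>K}|Y^G_{n,x}|\le\sum_{x>K}|Y_{n,x}|$ for the \emph{entire} tail at once, with no bookkeeping of blocks; combined with Fatou and the uniform integrability you already established for $\|Y_n\|_1$, this yields the required tail bound. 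Without invoking that lemma (or an equivalent Marshall-type inequality for the tail beyond a touchpoint), your argument for the third display does not go through.
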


Convergence of the Hellinger distance requires an even more stringent condition.
\begin{cor}\label{cor:hell}
Suppose that  $\kappa = \sup\{x: p_x >0\}<\infty$.  Then
\begin{eqnarray*}
nH^2(\widehat p_n, p)
&\rightarrow_d& \frac{1}{8}\sum_{x=0}^\kappa \frac{Y_x^2}{p_x}\\
nH^2(\widehat p_n^R, p)
&\rightarrow_d& \frac{1}{8}\sum_{x=0}^\kappa \frac{Y_x^2}{p_x}\\
nH^2(\widehat p_n^G, p)
&\rightarrow_d& \frac{1}{8}\sum_{x=0}^\kappa \frac{(Y^G_x)^2}{p_x}
                           \leq _S \frac{1}{8}\sum_{x=0}^\kappa \frac{Y_x^2}{p_x}.
\end{eqnarray*}
If $p$ is not strictly monotone, then $\leq_S$ may be replaced with $<_S$.
The distribution of $\sum_{x=0}^\kappa Y_x^2/p_x$ is chi-squared
with $\kappa$ degrees of freedom.  The above convergence also holds in expectation, and
\begin{eqnarray*}
E\left[\sum_{x=0}^\kappa \frac{(Y^G_x)^2}{p_x}\right]
\leq E\left[\sum_{x=0}^\kappa \frac{Y_x^2}{p_x}\right]= \kappa,
\end{eqnarray*}
with equality if and only if  $p$ is strictly monotone.
\end{cor}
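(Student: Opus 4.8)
The plan is to reduce everything to the finite-dimensional vector indexed by $\{0,\ldots,\kappa\}$, apply the delta method for $\sqrt{\cdot}$ together with the process limits of Theorem~\ref{thm:process}, and then analyze the three quadratic limits separately. Since $\kappa<\infty$, every estimator is supported on $\{0,\ldots,\xmax\}\subseteq\{0,\ldots,\kappa\}$ almost surely, so $nH^2(\widetilde p_n,p)=\tfrac12\sum_{x=0}^{\kappa} W_{n,x}^2$, where $W_{n,x}=\sqrt n(\sqrt{\widetilde p_{n,x}}-\sqrt{p_x})$ and the sum is finite. Writing $W_{n,x}=Y_{n,x}/(\sqrt{\widetilde p_{n,x}}+\sqrt{p_x})$ and using $\widetilde p_{n,x}\to_P p_x>0$ (from consistency, Theorem~\ref{thm:GlobalConsistencyTheorem}) together with the joint convergence of $(Y_{n,x})_{x=0}^\kappa$ in Theorem~\ref{thm:process}, Slutsky's lemma over the finite index set gives $(W_{n,x})_{x=0}^\kappa\to_d(\widetilde Y_x/(2\sqrt{p_x}))_{x=0}^\kappa$, with $\widetilde Y=Y,Y^R,Y^G$ for $\widetilde p_n=\widehat p_n,\widehat p_n^R,\widehat p_n^G$. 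The continuous mapping theorem applied to $w\mapsto\tfrac12\|w\|_2^2$ then yields $nH^2(\widetilde p_n,p)\to_d\tfrac18\sum_{x=0}^\kappa \widetilde Y_x^2/p_x$.

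Next I would compare the three limits. Because $1/p_x$ is constant across each flat block of $p$ and $\rear$ only permutes the coordinates inside a block (leaving the strictly decreasing coordinates fixed, where $Y^R=Y$), the weighted sum of squares is unchanged: $\sum_{x=0}^\kappa (Y^R_x)^2/p_x=\sum_{x=0}^\kappa Y_x^2/p_x$, giving the first two limits. For the MLE, on each flat block $\{r,\ldots,s\}$ the operator $\gren$ is precisely the least-squares projection of $Y^{(r,s)}$ onto the convex cone of nonincreasing vectors (the LCM-slope characterization of isotonic regression, cf.\ \cite{MR961262}); the Pythagorean identity for projections onto a convex cone gives $\|\gren(Y^{(r,s)})\|_2^2\le\|Y^{(r,s)}\|_2^2$. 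Dividing by the common value of $p_x$ on the block and summing over all blocks (and over the strictly decreasing coordinates, where $Y^G=Y$) produces $\sum_{x=0}^\kappa (Y^G_x)^2/p_x\le\sum_{x=0}^\kappa Y_x^2/p_x$ almost surely, hence $\leq_S$.

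To identify the common law, set $V_x=Y_x/\sqrt{p_x}$; then $\cov(V_x,V_z)=\delta_{x,z}-\sqrt{p_xp_z}$, so $V$ is centered Gaussian with covariance $I-\sqrt p\,(\sqrt p)^{\top}$, an orthogonal projection of rank $\kappa$ since $\|\sqrt p\|_2^2=\sum_x p_x=1$; consequently $\sum_{x=0}^\kappa Y_x^2/p_x=\|V\|_2^2\sim\chi^2_\kappa$ with mean $\kappa$. For convergence in expectation I would verify uniform integrability: $nH^2(\widehat p_n,p)\le\tfrac12\sum_{x=0}^\kappa Y_{n,x}^2/p_x$ is a scaled Pearson statistic for the multinomial on $\kappa+1$ cells, whose moments are bounded uniformly in $n$, so this sequence is uniformly integrable; Theorem~\ref{thm:BasicInequalities}(i) dominates the rearrangement and MLE versions by the empirical one, so those are uniformly integrable too. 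Combined with the distributional limits this gives convergence of all the relevant expectations, and taking expectations in the almost-sure inequality of the previous paragraph yields $E[\sum (Y^G_x)^2/p_x]\le E[\sum Y_x^2/p_x]=\kappa$. If $p$ is strictly monotone there are no flat blocks, so $Y^G=Y$ and equality holds; if $p$ has a block of length $\ge2$ then $\gren$ strictly contracts on an event of positive probability, which (using absolute continuity of the limits) upgrades $\leq_S$ to $<_S$ and the expectation inequality to a strict one.

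The step I expect to be the main obstacle is the uniform integrability needed for convergence in expectation, since weak convergence of $nH^2$ does not by itself control $E[nH^2]$; the clean route is the uniform moment bound for the multinomial Pearson statistic together with the domination supplied by Theorem~\ref{thm:BasicInequalities}(i). A secondary point requiring care is that the contraction inequality for $Y^G$ is a statement about a $(1/p_x)$-weighted norm, and it reduces to the plain Euclidean cone projection only because $p_x$ is constant on each flat block.
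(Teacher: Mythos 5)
Your proposal is correct and follows the same main line as the paper: expand $nH^2$ as $\tfrac{1}{4}\sum_x [\sqrt{n}(\widetilde p_{n,x}-p_x)]^2/(\sqrt{\widetilde p_{n,x}}+\sqrt{p_x})^2$ over the finite index set $\{0,\ldots,\kappa\}$, apply consistency plus the process limits and the continuous mapping theorem, and then compare the three limits block by block, with the comparison and strictness arguments deferred to the flat regions exactly as in the paper's proof of Corollary~\ref{cor:ell2}. You differ in two auxiliary steps, both legitimately. First, for $\sum_x (Y^G_x)^2/p_x \le \sum_x Y_x^2/p_x$ you invoke the Pythagorean identity for the projection onto the cone of nonincreasing vectors, whereas the paper uses Jensen's inequality together with the fact that $\gren(Y^{(r,s)})_x$ is a local average of the $Y_y$'s; these are two faces of the same contraction property, and your observation that the $1/p_x$ weighting is harmless only because $p_x$ is constant on each block is exactly the point the paper relies on implicitly. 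Second, for convergence in expectation the paper appeals to dominated convergence via $H^2(p,q)\le \|p-q\|_1$ and Corollary~\ref{cor:ell1}, while you establish uniform integrability of $nH^2(\widehat p_n,p)$ directly from the bound $nH^2(\widehat p_n,p)\le \tfrac12\sum_{x=0}^{\kappa}Y_{n,x}^2/p_x$ (the Pearson statistic, whose moments are bounded uniformly in $n$ since the number of cells is finite) and then transfer it to the other two estimators by the domination in Theorem~\ref{thm:BasicInequalities}(i). Your route is self-contained and, if anything, more transparent, since the paper's dominating bound $n\|\widehat p_n-p\|_1$ is not itself uniformly bounded in expectation; the explicit rank-$\kappa$ projection argument you give for the $\chi^2_\kappa$ law is also a clean replacement for the paper's citation of Ferguson.
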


\begin{rem}\label{rem:wrongspace}
We note that if  $\sum_{x\geq 0}\sqrt{p_x}=\infty$, then $\sum_{x\geq 0}|Y_x|=\infty$ 
almost surely, and if $\kappa=\infty,$ then $\sum_{x\geq 0} Y_x^2/p_x$ is also infinite 
almost surely.  This implies that for the empirical and rearrangement estimators, 
the conditions in Corollaries \ref{cor:ell1} and \ref{cor:hell} are also necessary 
for convergence.   The same is true for the Grenander estimator, when the true 
distribution is strictly decreasing.
\end{rem}

\begin{prop}\label{prop:means}
Let $p$ be a decreasing distribution, and write it in terms of its intervals of constancy.  That is, let
\begin{eqnarray*}
p_x &=& \theta_i \mbox{ if } x\in C_i,
\end{eqnarray*}
where where $\theta_i>\theta_{i+1}$ for all $i=1, 2, \ldots$,
and where $\{C_i\}_{i\geq 1}$ forms a partition of $\NN$.  Then
\begin{eqnarray*}
E\left[\sum_{x\geq 0} (Y^G_x)^2 \right]
&=& \sum_{i\geq 1} \sum_{j=1}^{|C_i|}\theta_i\left(\frac{1}{j}-\theta_i\right).
\end{eqnarray*}
Also, if $\kappa = \sup\{x: p_x >0\}<\infty$, then
\begin{eqnarray*}
E\left[\sum_{x=0}^\kappa \frac{(Y^G_x)^2}{p_x} \right]
&=& \sum_{i\geq 1} \sum_{j=1}^{|C_i|}\left(\frac{1}{j}-\theta_i\right).
\end{eqnarray*}
\end{prop}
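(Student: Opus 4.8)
The plan is to exploit the block structure of $Y^G$. Write $\NN=\bigcup_i C_i$ with $C_i=\{r_i,\ldots,s_i\}$ the intervals of constancy, $\theta_i=p_{r_i}=\cdots=p_{s_i}$ and $\tau_i=|C_i|=s_i-r_i+1$. By the definition of $Y^G$ in Theorem \ref{thm:process}, $(Y^G)^{(r_i,s_i)}=\gren(Y^{(r_i,s_i)})$, and on singleton blocks ($\tau_i=1$) the map $\gren$ is the identity, so $Y^G_x=Y_x$ there. Since the summand on $C_i$ depends only on $Y^{(r_i,s_i)}$ and all terms are nonnegative, Tonelli gives $E[\sum_{x\ge0}(Y^G_x)^2]=\sum_i E[\|\gren(Y^{(r_i,s_i)})\|_2^2]$, the interchange being legitimate and the total finite by Corollary \ref{cor:ell2}. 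Everything thus reduces to evaluating the single-block quantity $E[\|\gren(Y^{(r,s)})\|_2^2]$ for an exchangeable Gaussian vector $Y^{(r,s)}$ of length $\tau$ with $\mathrm{Var}(Y_x)=\theta(1-\theta)$ and $\cov(Y_x,Y_{x'})=-\theta^2$ for $x\ne x'$.

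For the key single-block identity I would use the combinatorial description of the concave majorant. Recall that $\gren(w)$ returns the slopes of the LCM of the partial sums of $w$, equivalently the least-squares projection of $w$ onto the cone of non-increasing sequences; its constant pieces are the faces of that LCM. By the fluctuation theory of Sparre Andersen for a walk with exchangeable, continuously distributed increments (as developed in the convex/concave-minorant literature), the faces of the concave majorant are, in distribution, described by the cycles of a uniformly random permutation $\sigma$ of the $\tau$ indices, taken independent of $Y^{(r,s)}$: a cycle $C$ produces a face of horizontal length $|C|$ and vertical height $\sum_{x\in C}Y_x$, hence of slope $|C|^{-1}\sum_{x\in C}Y_x$. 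Consequently $\|\gren(Y^{(r,s)})\|_2^2=\sum_{\text{faces}}\ell_f(\text{slope}_f)^2=\sum_{\text{faces}}\ell_f^{-1}(\sum_{x\in f}Y_x)^2$ has the same distribution as $\sum_{C}|C|^{-1}(\sum_{x\in C}Y_x)^2$, the sum running over the cycles of $\sigma$.

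The expectation is then immediate by conditioning on $\sigma$. For a cycle $C$ of size $\ell$ the exchangeable covariance gives $\mathrm{Var}(\sum_{x\in C}Y_x)=\ell\theta(1-\theta)-\ell(\ell-1)\theta^2=\ell\theta(1-\ell\theta)$, so $E[|C|^{-1}(\sum_{x\in C}Y_x)^2]=\theta(1-\ell\theta)$. Summing over cycles, and using $\sum_C|C|=\tau$ together with the classical identity $E[\,\#\text{cycles}(\sigma)\,]=\sum_{j=1}^\tau j^{-1}=:H_\tau$, I obtain $E[\|\gren(Y^{(r,s)})\|_2^2]=\theta H_\tau-\theta^2\tau=\sum_{j=1}^\tau\theta(j^{-1}-\theta)$. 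Summing this over the blocks $C_i$ yields the first formula. For the second, when $\kappa<\infty$ there are finitely many blocks, and dividing the $i$-th block contribution by $\theta_i$ before summing gives $\sum_i(H_{\tau_i}-\theta_i\tau_i)=\sum_i\sum_{j=1}^{\tau_i}(j^{-1}-\theta_i)$, as claimed.

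The main obstacle is the second paragraph: pinning down the Sparre Andersen cycle representation in exactly the form needed, namely that the face lengths are distributed as the cycle lengths of a uniform permutation and that, jointly, each face's increment is the sum of the increments indexed by the corresponding cycle, with the permutation independent of the increment values. I would invoke the continuous-increment hypothesis to rule out ties (so the LCM is a.s.\ vertex-defined), which holds since each $Y_x$ is a nondegenerate normal; the only delicate point is the degenerate case $\theta\tau=1$ (the uniform distribution, where $\sum_{x\in C}Y_x$ is pinned to $0$ and the block is a discrete Brownian bridge), but the representation and the per-cycle formula $\theta(1-\ell\theta)$ remain valid there, a full cycle of length $\tau$ correctly contributing $0$. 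As an alternative packaging, Proposition \ref{prop:carolandykstra} reduces each block to i.i.d.\ standard normals $W$, after which the identity becomes $E[\|\gren(W)\|_2^2]=H_\tau$ — equivalently, that the statistical dimension of the monotone cone in $\RR^\tau$ equals $H_\tau$ — which is proved through the same cycle/block count.
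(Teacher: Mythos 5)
Your proof is correct, but it follows a genuinely different route from the paper's. The paper first uses Proposition \ref{prop:carolandykstra} to reduce each block of constancy to i.i.d.\ standard normals $Z_1,\ldots,Z_k$, then proves the identity $E[\sum_i (Z_i^G)^2]=E[T]$ ($T$ the number of vertices of the LCM) by a self-contained but lengthy combinatorial argument (Lemma \ref{lem:l2_eq_touch}) that partitions the event $\{T=m\}$ according to the face structure and exploits the independence of block averages from within-block deviations; only at the last step does it invoke Sparre Andersen, and only for the first-moment statement $E[T]=\sum_{j\le k}1/j$. You instead work directly with the exchangeable Gaussian block $Y^{(r,s)}$ and invoke the full joint cycle representation of the concave majorant --- face lengths \emph{and} face increments distributed as the cycles of an independent uniform permutation together with the corresponding increment sums --- after which the result is a two-line conditional variance computation ($\mathrm{Var}(\sum_{x\in C}Y_x)=\ell\theta(1-\ell\theta)$, then $E[\#\mathrm{cycles}]=H_\tau$). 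The trade-off is that your key input is a substantially stronger theorem than the classical vertex-count result: the joint (length, increment) cycle description for walks with exchangeable, tie-free increments. This is a genuine theorem (it is the content of the Spitzer/Sparre Andersen fluctuation identity in the form given by Abramson, Pitman, Ross and Uribe Bravo for exchangeable increments), and your verification that the no-ties hypothesis holds for the Gaussian block --- including the bridge case $\theta\tau=1$, where one checks $\mathrm{Var}(S_i/i-S_j/j)=\theta/i-\theta/j>0$ --- is the right thing to do; but it must be cited precisely in the joint form, since the weaker ``face lengths are distributed as cycle lengths'' statement would not by itself justify your equality in distribution for $\sum_f \ell_f^{-1}\Delta_f^2$. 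What your route buys in exchange is a shorter argument that also identifies the full distribution of $\|\gren(Y^{(r,s)})\|_2^2$, not merely its mean, and it bypasses the reduction to i.i.d.\ normals entirely.
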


This result allows us to explicitly calculate exactly how much ``better" the 
performance of the MLE is, in comparison to $Y$ and $Y^R$.    
With $\RR$--valued random variables, it is standard to compare the 
asymptotic variance to evaluate the relative efficiency of two estimators.  
We, on the other hand, are dealing with $\RR^\NN$--valued processes.  
Consider some process $W\in \RR^\NN,$ and let $\Sigma_W$ denote its 
covariance matrix (of size $\NN\times\NN$).  Then the trace norm of 
$\Sigma_W$ is equal to the expected squared $\ell_2$ norm of $W$, 
\begin{eqnarray*}
E[||W||_2^2]  = ||\Sigma_W||_{\mathrm{trace}} = \sum_{i\geq 1} \lambda_i,
\end{eqnarray*}
where $\{\lambda_i\}_{i\geq 1}$ denotes the eigenvalues of $\Sigma_W.$  
Therefore, Corollary \ref{cor:ell2} tells us that, asymptotically, $Y^G$ is more 
efficient than $Y^R$ and $Y$, in the sense that
\begin{eqnarray*}
||\Sigma_{Y^G}||_{\mathrm{trace}} \leq ||\Sigma_{Y^R}||_{\mathrm{trace}} = ||\Sigma_Y||_{\mathrm{trace}},
\end{eqnarray*}
with equality if and only if $p$ is strictly decreasing.   Furthermore, Proposition \ref{prop:means} 
allows us to calculate exactly how much more efficient $Y^G$ is  for any given mass function $p.$

Suppose that $p$ has exactly one period of constancy on $r \leq x \leq s$, 
and let $\tau=s-r+1\geq 2$.  Further, suppose that $p_x=\theta^*$ for $r\leq x\leq s.$  Then 
\begin{eqnarray*}
E\left[||Y^R||_2^2\right]-E\left[||Y^G||_2^2\right] &=& E\left[||Y||_2^2\right]-E\left[||Y^G||_2^2\right] \\
&=& \theta^*\left(\tau-\sum_{i=1}^\tau \frac{1}{i}\right).
\end{eqnarray*}
In particular, if $p$ is the uniform distribution on $\{0, \ldots, y\},$ 
then we find that $E\left[||Y^R||_2^2\right]=y/(y+1)$, whereas $E\left[||Y^G||_2^2\right]$
behaves like $\log y/(y+1),$ and is much smaller.

Note that if $p$ is strictly monotone, then we obtain
\begin{eqnarray*}
E\left[\sum_{x\geq 0} (Y^G_x)^2 \right]=
\sum_{i\geq 1} \theta_i(1-\theta_i)
= E\left[\sum_{x\geq 0} Y_x^2 \right],
\end{eqnarray*}
as required.  Also, if $p$ is the uniform probability mass function
on $\{ 0, \ldots , y \}$, we conclude that
\begin{eqnarray*}
E\left[\sum_{x=0}^y \frac{\gren(Y)_x^2}{p_x}\right] &=& \sum_{i=1}^y \frac{1}{i+1},
\end{eqnarray*}
where $\log y-0.5 <\sum_{i=1}^y (i+1)^{-1} < \log(y+1).$

Lastly, consider a distribution with bounded support, and fix $r<s$
where $p$ is strictly monotone on $\{r, \ldots, s\}$. That is, we have that  
$p_{r-1}>p_r > \ldots > p_s > p_{s+1}$. Next define $\tilde p$ by
$\tilde p_x = p_x$ for $x<r$ and $x>s$, and 
$\tilde p_x=\sum_{x=r}^s p_x/(s-r+1)$ for $x\in\{r, \ldots, s\}.$
Then the difference in the expected Hellinger metrics under the two distributions is
\begin{eqnarray*}
E_p\left[\sum_{x=0}^\kappa \frac{(Y^G_x)^2}{p_x} \right]
    -E_{\tilde p}\left[\sum_{x=0}^\kappa \frac{(Y^G_x)^2}{\tilde p_x} \right]
    &=& \tau -\sum_{j=1}^\tau \frac{1}{j}
\end{eqnarray*}
where $\tau=s-r+1.$  
Therefore, the longer the intervals of constancy in a distribution, 
the better the performance of the MLE.

\begin{rem}\label{rem:pointwise_ineq}
From Theorem 1.6.2 of \cite{MR961262} it follows that for any $x\geq 0$
\begin{eqnarray*}
E[(Y_{x}^G)^2] &\leq & E[Y_x^2]=p_x(1-p_x).
\end{eqnarray*}
This result may also be proved using the method used to show Proposition \ref{prop:means}.  
Note that this pointwise inequality does not hold in general for $Y^G$ replaced with $Y^R.$
\end{rem}

Corollaries 4.1 and 4.2 then translate into statements concerning the limiting risks of 
the three estimators $\widehat{p}_n$, $\widehat{p}_n^R$, and 
$\widehat{p}_n^G$ as follows, where the risk was defined in \eqref{def:risk_k}.   
In particular, we see that, asymptotically, both $\widehat p_n^R$ and 
$\widehat p_n$ are inadmissible, and are dominated by the maximum likelihood estimator $\widehat p_n^G.$

\begin{cor}\label{cor:LimitingRisks}
For any $2 \leq k \leq \infty$, and any $p\in\mc P$, the class of decreasing probability mass functions on $\NN,$
\begin{eqnarray*}
&& n^{k/2} R_{k} (p, \widehat{p}_n) \rightarrow E [\| Y \|_k^k], \nonumber \\ 
&& n^{k/2} R_{k} (p, \widehat{p}_n^R) \rightarrow E[\| Y \|_k^k], \nonumber \\ 
&& n^{k/2} R_{k} (p, \widehat{p}_n^G) \rightarrow E[ \| Y^G \|_k^k] \leq E[ \| Y \|_k^k ].
\label{GrenanderWinsForKin2Infty}
\end{eqnarray*}
The inequality in the last line is strict if $p$ is not strictly monotone.   
The statements also hold for $k=1$ under the additional hypothesis that $\sum_{x \ge 0} \sqrt{p}_x < \infty$. 
\end{cor}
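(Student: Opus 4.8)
The plan is to recognize Corollary~\ref{cor:LimitingRisks} as essentially a restatement of the convergence-in-expectation assertions already contained in Corollaries~\ref{cor:ell2} and~\ref{cor:ell1}, once the risk is rewritten in terms of the fluctuation processes. First I would record the elementary scaling identity. From the definition~\eqref{def:risk_k} together with $Y_{n,x}=\sqrt n(\widehat p_{n,x}-p_x)$ (and likewise for the rearrangement and Grenander versions), for finite $k$ one has
$$
n^{k/2}R_k(p,\widetilde p_n)=E_p\Big[\,n^{k/2}\sum_{x\ge0}|\widetilde p_{n,x}-p_x|^k\,\Big]=E\big[\|\sqrt n(\widetilde p_n-p)\|_k^k\big],
$$
so that $n^{k/2}R_k(p,\widehat p_n)=E[\|Y_n\|_k^k]$, $n^{k/2}R_k(p,\widehat p_n^R)=E[\|Y_n^R\|_k^k]$, and $n^{k/2}R_k(p,\widehat p_n^G)=E[\|Y_n^G\|_k^k]$. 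For the case $k=\infty$ the same computation holds under the convention that $n^{k/2}$ and $\|\cdot\|_k^k$ are read as $n^{1/2}$ and $\|\cdot\|_\infty$, giving $\sqrt n\,R_\infty(p,\widetilde p_n)=E[\|\sqrt n(\widetilde p_n-p)\|_\infty]$.

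With this reduction in hand, the three convergences are immediate. For $2\le k\le\infty$ the convergence-in-expectation statement of Corollary~\ref{cor:ell2} gives $E[\|Y_n\|_k^k]\to E[\|Y\|_k^k]$, $E[\|Y_n^R\|_k^k]\to E[\|Y\|_k^k]$, and $E[\|Y_n^G\|_k^k]\to E[\|Y^G\|_k^k]$; for $k=1$ the same conclusions follow from Corollary~\ref{cor:ell1} under the stated hypothesis $\sum_{x\ge0}\sqrt{p_x}<\infty$. Substituting the identities of the previous paragraph yields the three displayed limits, for any $p\in\mc P$.

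It remains to justify the inequality $E[\|Y^G\|_k^k]\le E[\|Y\|_k^k]$ and its strictness. Both corollaries supply the stochastic ordering $\|Y^G\|_k\le_S\|Y\|_k$; since $t\mapsto t^k$ is nondecreasing on $[0,\infty)$, stochastic ordering of these nonnegative random variables passes to their $k$-th moments, which gives the inequality (for $k=\infty$ one simply takes first moments of $\|Y^G\|_\infty\le_S\|Y\|_\infty$). When $p$ is not strictly monotone the corollaries upgrade $\le_S$ to the strict ordering $<_S$, which likewise yields a strict moment inequality and hence a strict risk inequality. I expect no genuine obstacle here: the substantive analytic content---namely the uniform integrability that turns the weak limits of Theorem~\ref{thm:process} into convergence of the moments $E[\|Y_n\|_k^k]$, $E[\|Y_n^R\|_k^k]$, and $E[\|Y_n^G\|_k^k]$---has already been established inside Corollaries~\ref{cor:ell2} and~\ref{cor:ell1}, so the present argument is purely a matter of rewriting the risk and transcribing those results.
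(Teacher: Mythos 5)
Your proposal is correct and follows exactly the route the paper intends: the paper offers no separate argument for this corollary, stating only that Corollaries \ref{cor:ell2} and \ref{cor:ell1} ``translate into'' these risk statements, which is precisely the rewriting $n^{k/2}R_k(p,\widetilde p_n)=E[\|\sqrt n(\widetilde p_n-p)\|_k^k]$ followed by the convergence-in-expectation and stochastic-ordering conclusions you transcribe.
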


\section{Estimating the mixing distribution}\label{sec:mixing}

Here, we consider the problem of estimating the mixing distribution
$q$ in \eqref{line:mix_form}.  This may be done directly via the
estimators of $p$ and the formula \eqref{line:mix_recover}.
Define the estimators of the mixing distribution as follows
\begin{eqnarray*}
\widehat q_{n,x} &=& -(x+1)\triangle \widehat p_{n,x},\\
\widehat q_{n,x}^R &=& -(x+1)\triangle \widehat p_{n,x}^R,\\
\widehat q_{n,x}^G &=& -(x+1)\triangle \widehat p_{n,x}^G.
\end{eqnarray*}
Each of these estimators sums to one by definition, however
$\widehat q_n$ is not guaranteed to be positive.   The main results
of this section are consistency and $\sqrt{n}$--rate of convergence of these estimators.

\begin{thm}\label{thm:mixing_consistent}
Suppose that $p$ is monotone decreasing and satisfies
$\sum_{x\geq 0} x p_x <\infty$. Then all three estimators
$\widehat q_n, \widehat{q}_n^G$ and $\widehat{q}_{n}^R$ are consistent
estimators of $q$ in the sense that
\begin{eqnarray*}
\rho(\tilde q_n, q) \rightarrow 0
\end{eqnarray*}
almost surely as $n\rightarrow\infty$ for $\tilde q_n=\widehat q_n, \widehat{q}_n^G$
and $\widehat{q}_n^R$, whenever $\rho(\tilde q,q)=H(\tilde q,q)$ or
$\rho(\tilde q,q)=||\tilde q-q||_k, 1\leq k \leq \infty$.
\end{thm}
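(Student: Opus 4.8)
The plan is to reduce every claim to consistency in the $\ell_1$ norm. This suffices because $\|v\|_k \le \|v\|_1$ for all $1 \le k \le \infty$, and because for nonnegative $\tilde q$ one has $H^2(\tilde q, q) \le \tfrac12 \|\tilde q - q\|_1$ via the elementary bound $(\sqrt a - \sqrt b)^2 \le |a - b|$. The common starting point is pointwise convergence: Theorem~\ref{thm:GlobalConsistencyTheorem} gives $\|\tilde p_n - p\|_\infty \to 0$ almost surely for each of $\widehat p_n, \widehat p_n^R, \widehat p_n^G$, so $\tilde p_{n,x} \to p_x$ almost surely for every fixed $x$; since $\tilde q_{n,x} = -(x+1)\triangle \tilde p_{n,x}$ involves only two coordinates of $\tilde p_n$, it follows that $\tilde q_{n,x} \to q_x$ almost surely for every fixed $x$ and each of the three estimators.

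For the two monotone estimators the upgrade from pointwise to $\ell_1$ convergence is immediate from Scheff\'e's lemma. Because $\widehat p_n^R$ and $\widehat p_n^G$ are decreasing mass functions with finite support, every coordinate of $\widehat q_n^R$ and $\widehat q_n^G$ is nonnegative, and a summation by parts (whose boundary term vanishes by finiteness of the support) gives $\sum_x \widehat q_{n,x}^R = \sum_x \widehat q_{n,x}^G = 1$. Thus each is a genuine probability mass function converging pointwise to the probability mass function $q$, and Scheff\'e's lemma yields $\|\widehat q_n^R - q\|_1 \to 0$ and $\|\widehat q_n^G - q\|_1 \to 0$ almost surely. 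No moment assumption enters here; the hypothesis $\sum_x x p_x < \infty$ is needed only for the empirical estimator.

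The empirical estimator $\widehat q_n$ is the main obstacle, since it is only a signed measure---its coordinates are negative wherever $\widehat p_n$ fails to be monotone---so Scheff\'e's lemma does not apply. I would instead argue by direct truncation. For fixed $M$,
\begin{eqnarray*}
\|\widehat q_n - q\|_1 \le \sum_{x \le M} |\widehat q_{n,x} - q_x| + 2\sum_{x > M}(x+1)\widehat p_{n,x} + \sum_{x > M} q_x,
\end{eqnarray*}
where the middle term is an upper bound for $\sum_{x>M}|\widehat q_{n,x}|$ obtained from $|\widehat q_{n,x}| \le (x+1)(\widehat p_{n,x} + \widehat p_{n,x+1})$ and re-indexing. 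The first sum is finite and tends to $0$ almost surely by the pointwise convergence above, and the last sum tends to $0$ as $M \to \infty$ since $q$ is a probability mass function. The middle term equals $2 n^{-1} \sum_{i=1}^n (X_i + 1) 1_{\{X_i > M\}}$, which by the strong law of large numbers converges almost surely to $2 E[(X+1)1_{\{X>M\}}]$; this is where the hypothesis $\sum_x x p_x < \infty$, i.e. $E[X] < \infty$, is indispensable, guaranteeing finiteness and $2E[(X+1)1_{\{X>M\}}] \to 0$ as $M \to \infty$. Taking $\limsup_n$ for each fixed $M$ and then letting $M \to \infty$ along $\NN$ (the intersection of countably many almost-sure events remains almost sure) gives $\|\widehat q_n - q\|_1 \to 0$ almost surely.

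All $\ell_k$ statements, $1 \le k \le \infty$, now follow from $\|\cdot\|_k \le \|\cdot\|_1$, and the Hellinger statements for $\widehat q_n^R$ and $\widehat q_n^G$ from $H^2 \le \tfrac12\|\cdot\|_1$. The one point needing extra care is the Hellinger distance of $\widehat q_n$, whose negative coordinates do not vanish with probability tending to one (e.g. in the uniform case $\widehat p_n$ is non-monotone with asymptotically positive probability). Interpreting $H$ through the positive parts $\widehat q_{n,x}^+$, it suffices to show the negative mass $\sum_x \widehat q_{n,x}^- \to 0$ almost surely; on each fixed coordinate this holds because $\widehat p_{n,x} - \widehat p_{n,x+1} \to p_x - p_{x+1} \ge 0$, and the tail is controlled exactly as in the truncation above, after which $\{\widehat q_{n,x}^+\}$ behaves asymptotically like a probability mass function and the same $\ell_1$-to-Hellinger reduction applies. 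The tail control of $\widehat q_n$, and with it the role of the first-moment hypothesis, is the crux of the argument.
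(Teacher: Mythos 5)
Your proof is correct, and for the empirical estimator $\widehat q_n$ it follows essentially the same route as the paper: bound $|\tilde q_{n,x}-q_x|$ by $(x+1)$ times increments of $\tilde p_n$, truncate at a level $M$, control the tail $\sum_{x>M}(x+1)\widehat p_{n,x}=n^{-1}\sum_{i}(X_i+1)1_{\{X_i>M\}}$ by the strong law of large numbers using $E[X]<\infty$, and send $M\to\infty$ along a countable sequence. Where you genuinely diverge is in the treatment of $\widehat q_n^R$ and $\widehat q_n^G$. The paper runs the same truncation for these two as well, invoking part 1 of Lemma \ref{lem:key_ineq} (inequality \eqref{line:key_ineq1} with the increasing function $f_x=x\,1_{\{x\ge K\}}$) to dominate their tails by the empirical tail $\sum_{x\ge K}x\widehat p_{n,x}$; you instead observe that both are genuine probability mass functions --- nonnegative because $\widehat p_n^R$ and $\widehat p_n^G$ are decreasing, summing to one by Abel summation on a finite support --- converging pointwise to the probability mass function $q$, and invoke Scheff\'e's lemma. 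Your route is cleaner and buys a slightly sharper conclusion: it shows the moment hypothesis $\sum_x xp_x<\infty$ is needed only for the signed estimator $\widehat q_n$, not for the two monotone ones. You are also more careful than the paper about the Hellinger distance of $\widehat q_n$, whose coordinates can be negative so that $H(\widehat q_n,q)$ requires a positive-part convention; the reduction via $|\widehat q_{n,x}^{+}-q_x|\le|\widehat q_{n,x}-q_x|$ for $q_x\ge 0$ is the right fix for a point the paper's proof passes over silently.
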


To study the rates of convergence we define the the fluctuation
processes $Z_n, Z_n^R$, and $Z_n^G$ as
\begin{eqnarray*}
Z_{n,x}&=&\sqrt{n}(\widehat q_{n,x}-q_x),\\
Z_{n,x}^R&=&\sqrt{n}(\widehat q_{n,x}^R-q_x),\\
Z_{n,x}^G&=&\sqrt{n}(\widehat q_{n,x}^G-q_x),
\end{eqnarray*}
with limiting processes  defined as 
\begin{eqnarray*}
Z_{x}&=& -(x+1)(Y_{x+1}-Y_x),\\
Z_{x}^R&=&-(x+1)(Y_{x+1}^R-Y_x^R),\\
Z_{x}^G&=&-(x+1)(Y_{x+1}^G-Y_x^G).
\end{eqnarray*}

\begin{thm}\label{thm:mixing_rate}
Suppose that $p$ is such that $\kappa = \sup\{x\geq 0: p_x>0\} <\infty.$  Then 
$Z_n \Rightarrow Z, Z_n^R \Rightarrow Z^R$ and $Z_n^G \Rightarrow Z^G.$
Furthermore, for any $k\geq 1$, $||Z_n||_k \rightarrow_d ||Z||_k, ||Z_n^R||_k \rightarrow_d ||Z^R||_k$
and $||Z_n^G||_k \rightarrow_d ||Z^G||_k$, and these convergences also hold in expectation.    
Also, $n H^2 (\widehat q_n, q) \rightarrow_d \sum_{x=0}^k Z_x^2/q_x$, 
$nH^2 (\widehat q_n, q) \rightarrow_d  \sum_{x=0}^k (Z_x^R)^2/q_x$
and $nH^2 (\widehat q_n, q) \rightarrow_d  \sum_{x=0}^k (Z_x^G)^2/q_x$, 
and these again also hold in expectation.
\end{thm}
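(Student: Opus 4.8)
The plan is to realize all three mixing-distribution fluctuation processes as the image of the corresponding $p$-processes under a single fixed linear map, and then to push the weak-convergence results of Theorems \ref{thm:l2Gaussian} and \ref{thm:process} through that map by the continuous mapping theorem. Define the difference operator $T$ by $(Tw)_x=-(x+1)(w_{x+1}-w_x)$. A one-line computation from the definitions of the estimators gives
\[
Z_{n,x}=-(x+1)(Y_{n,x+1}-Y_{n,x})=(TY_n)_x ,
\]
and likewise $Z_n^R=TY_n^R$ and $Z_n^G=TY_n^G$, while the stated limits satisfy $Z=TY$, $Z^R=TY^R$ and $Z^G=TY^G$. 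The hypothesis $\kappa<\infty$ is exactly what makes this strategy succeed: since $p_x=0$ for $x>\kappa$ forces $Y_{n,x}=Y_x=0$ there, all the processes live in the finite-dimensional space $\RR^{\kappa+1}$, on which $T$ — despite the growing factor $(x+1)$ that would prevent boundedness on all of $\ell_2$ — is a fixed finite matrix and hence continuous.

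First I would establish the process convergences. Because $Y_n\Rightarrow Y$ (Theorem \ref{thm:l2Gaussian}), $Y_n^R\Rightarrow Y^R$ and $Y_n^G\Rightarrow Y^G$ (Theorem \ref{thm:process}), and under $\kappa<\infty$ these are convergences of random vectors in $\RR^{\kappa+1}$, the continuous mapping theorem applied to $T$ yields $Z_n\Rightarrow Z$, $Z_n^R\Rightarrow Z^R$ and $Z_n^G\Rightarrow Z^G$. Composing further with the continuous map $w\mapsto\|w\|_k$ gives the distributional norm convergences $\|Z_n\|_k\rightarrow_d\|Z\|_k$ and its $R$ and $G$ analogues, for each $k\geq 1$.

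The convergence in expectation requires uniform integrability of $\|Z_n\|_k^k$, $\|Z_n^R\|_k^k$ and $\|Z_n^G\|_k^k$. Each coordinate $Y_{n,x}$ is a normalized sum of i.i.d.\ bounded centered (Bernoulli) variables, so $\sup_n E[|Y_{n,x}|^m]<\infty$ for every $m$; since the support is finite, $\sup_n E[\|Y_n\|_k^{k(1+\delta)}]<\infty$ for any $\delta>0$. The nonlinearity of $\rear$ and $\gren$ blocks a direct moment bound for the $R$ and $G$ versions, and here I would invoke the contraction inequalities of Theorem \ref{thm:BasicInequalities}, namely $\|Y_n^R\|_k\leq\|Y_n\|_k$ and $\|Y_n^G\|_k\leq\|Y_n\|_k$, to transfer moment control; the bounded-operator estimate $\|Tw\|_k\leq\|T\|_{\mathrm{op}}\|w\|_k$ then yields uniformly bounded $(1+\delta)$-moments for all three $Z$-norms, hence uniform integrability and convergence of the expectations.

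Finally, for the Hellinger statements I would follow the pattern of Corollary \ref{cor:hell}. On the support $\{x:q_x>0\}$, consistency (Theorem \ref{thm:mixing_consistent}, whose hypothesis $\sum_x xp_x<\infty$ is automatic when $\kappa<\infty$) gives $\widehat q^{(\cdot)}_{n,x}\to q_x>0$ almost surely, so these coordinates are eventually positive and the delta method gives
\[
\sqrt{n}\bigl(\sqrt{\widehat q^{(\cdot)}_{n,x}}-\sqrt{q_x}\bigr)
=\frac{Z^{(\cdot)}_{n,x}}{\sqrt{\widehat q^{(\cdot)}_{n,x}}+\sqrt{q_x}}
\rightarrow_d\frac{Z^{(\cdot)}_x}{2\sqrt{q_x}};
\]
summing and using $nH^2=\tfrac12\sum_x[\sqrt{n}(\sqrt{\widehat q_{n,x}}-\sqrt{q_x})]^2$ produces the claimed weighted sum-of-squares limits (with the same constant $\tfrac18$ as in Corollary \ref{cor:hell}), and expectation convergence again follows from uniform integrability. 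The main obstacle I anticipate lies precisely here: at indices where $q_x=0$ — the interiors of flat stretches of $p$, which exist whenever $p$ is not strictly decreasing on its support — the denominator degenerates, and one checks that $n\,\widehat q^{(\cdot)}_{n,x}=\sqrt{n}\,Z^{(\cdot)}_{n,x}$ does \emph{not} vanish in the limit. Thus the Hellinger sum must be read over the support of $q$, and the clean finite limit holds there; treating the degenerate coordinates (equivalently, recognizing that the statement is effective when $q$ has full support on $\{0,\dots,\kappa\}$, i.e.\ $p$ strictly decreasing) is the delicate point that does not arise in the corresponding result for $p$.
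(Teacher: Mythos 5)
Your proof is correct and takes the same route the paper intends: the paper's own proof of this theorem is a single sentence deferring to the finite-support reduction and the results of Sections \ref{sec:LimitDistributions} and \ref{sec:limitdistributions_metrics}, and you have simply supplied the details (the fixed linear map $T$ on $\RR^{\kappa+1}$, the continuous mapping theorem, and uniform integrability via the contraction inequalities of Theorem \ref{thm:BasicInequalities}). Your caveat about the Hellinger statement at coordinates where $q_x=0$ is well taken --- the paper's one-line proof does not address it, and indeed both the limit $\sum_{x}Z_x^2/q_x$ and the prelimit $nH^2(\widehat q_n^{G},q)$ degenerate at such coordinates unless $p$ is strictly decreasing on its support, so the clean finite-limit reading requires $q$ to have full support on $\{0,\ldots,\kappa\}$.
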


As before, we have asymptotic equivalence
of all three estimators if $p$ is strictly decreasing (cf. Corollary \ref{cor:decrasing}).
To determine the relative behaviour of the estimators $\widehat q_n^R$ and
$\widehat q_n^G$ we turn to simulations.  Since $\widehat q_n$ is not guaranteed
to be a probability mass function (unlike the other two estimators), we exclude it
from further consideration.

\begin{figure}[p]
\centering
\includegraphics[width=0.3\textwidth]{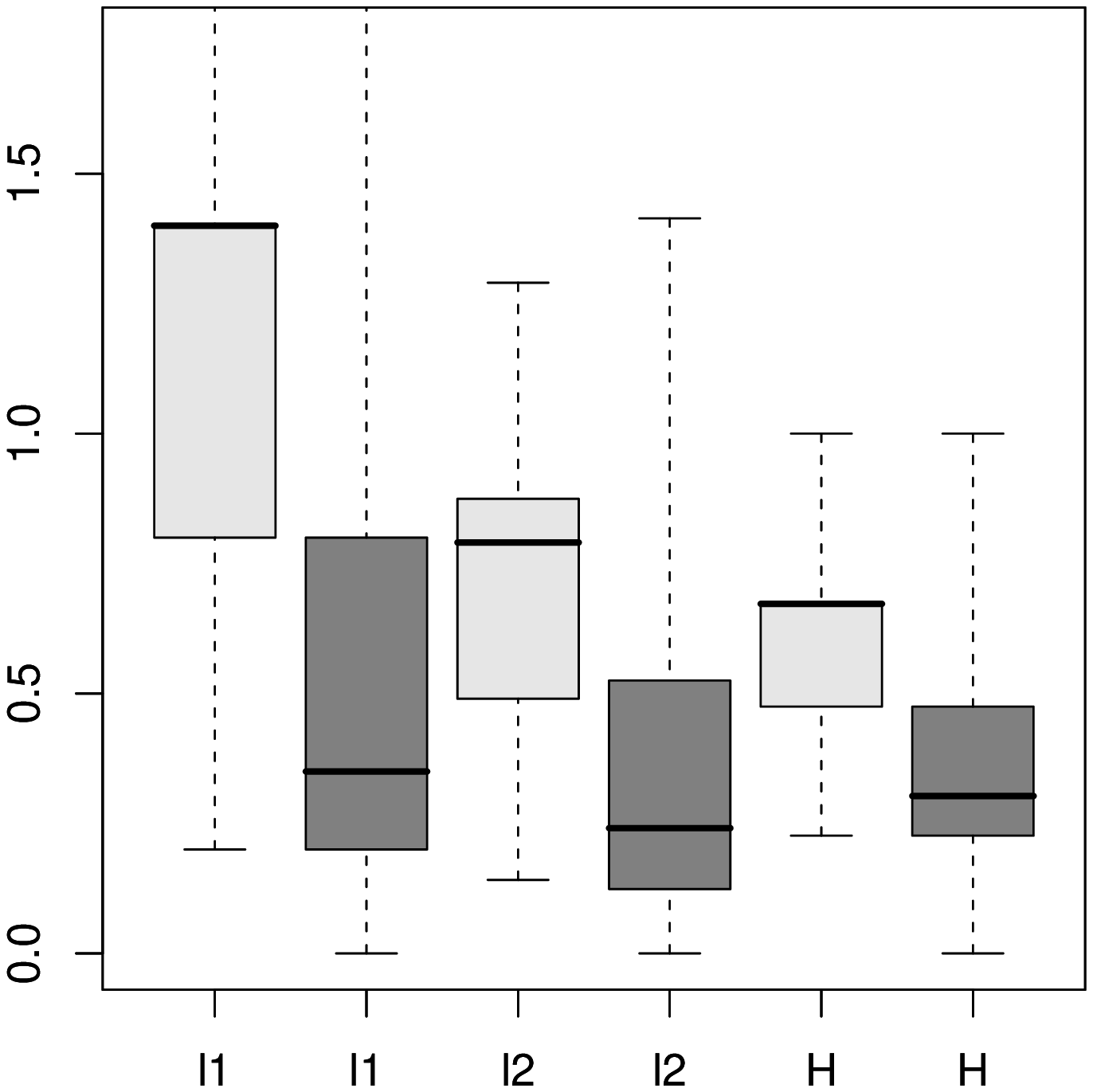}
\includegraphics[width=0.3\textwidth]{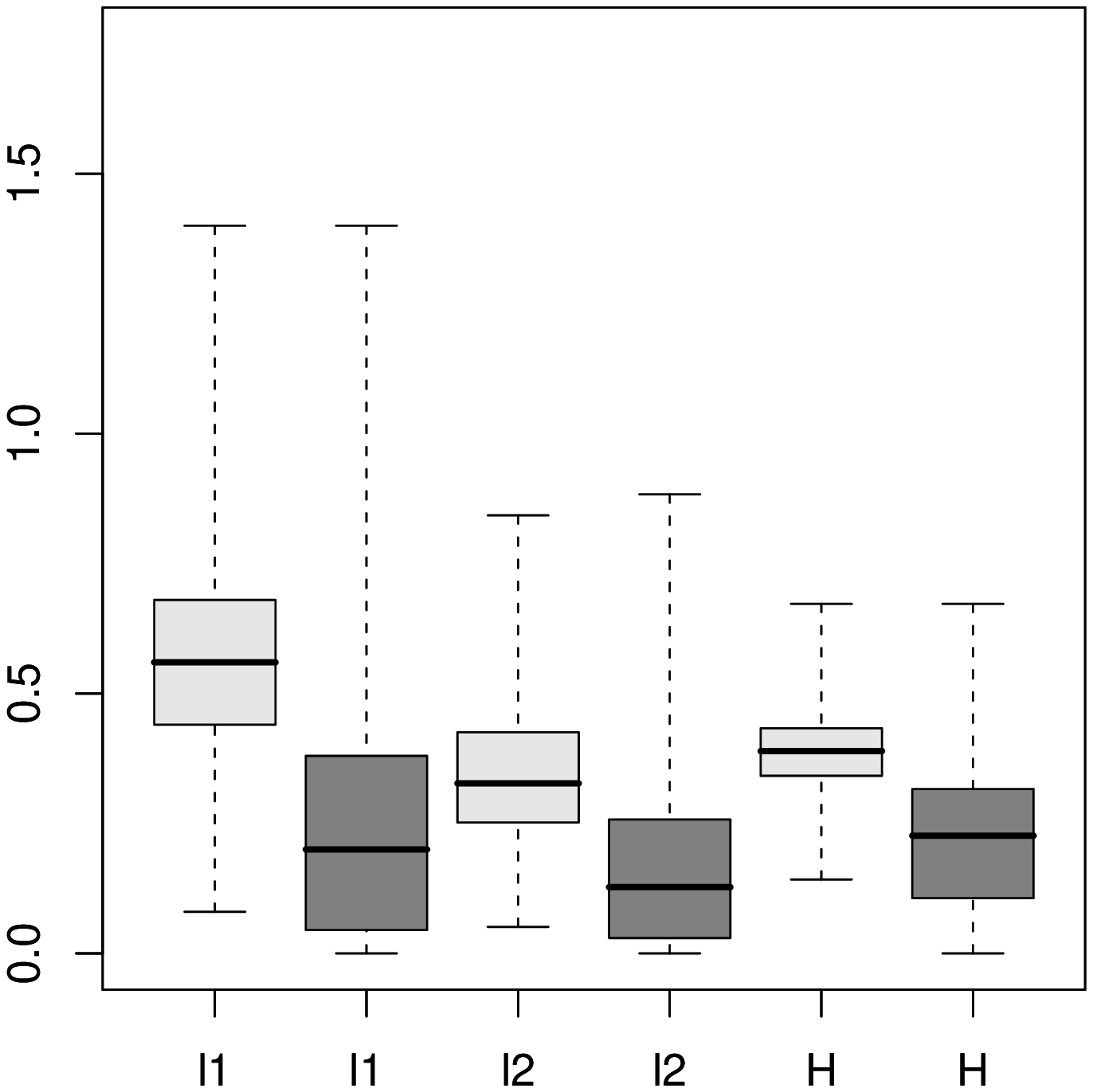}
\includegraphics[width=0.3\textwidth]{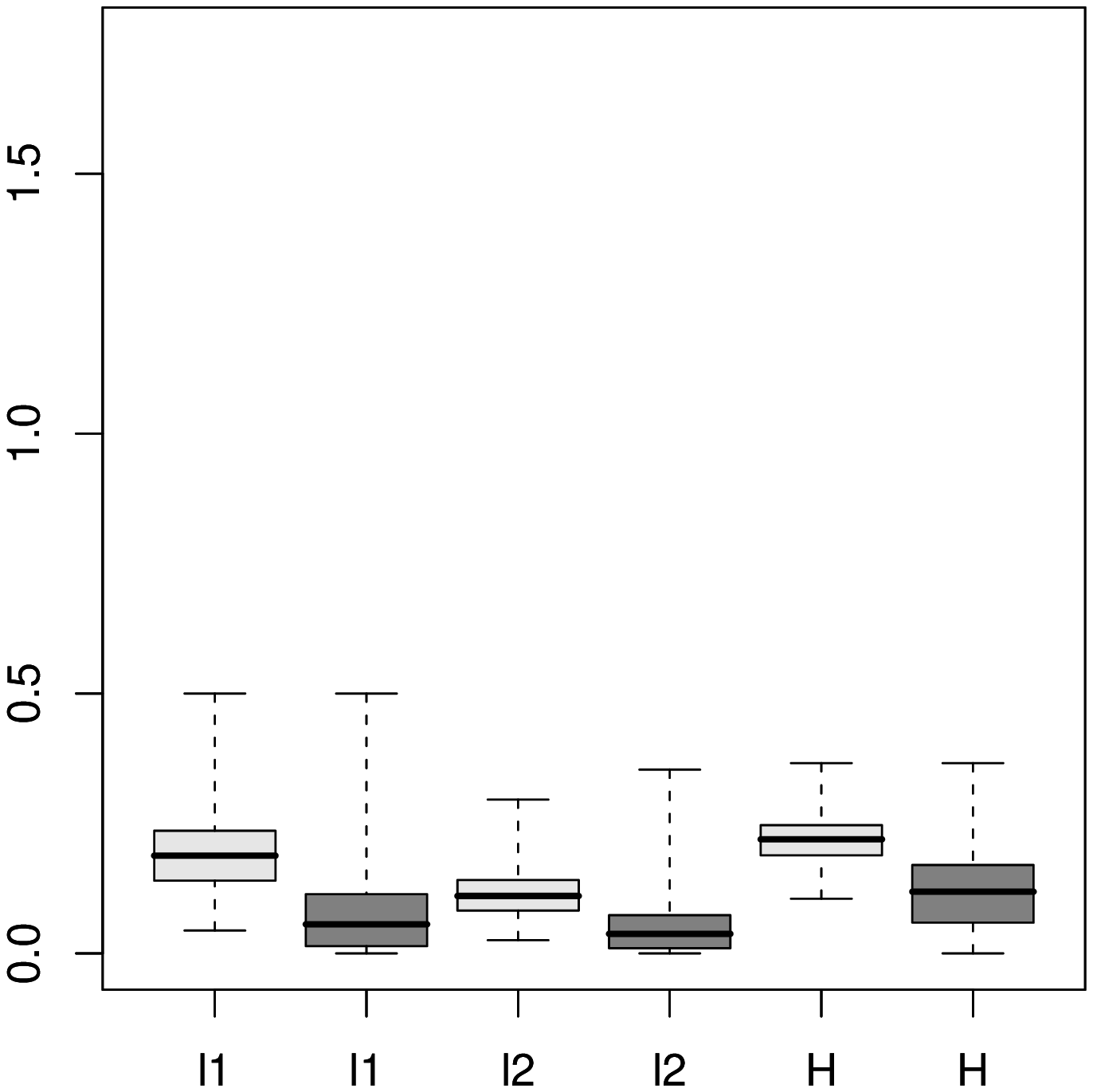}
\includegraphics[width=0.3\textwidth]{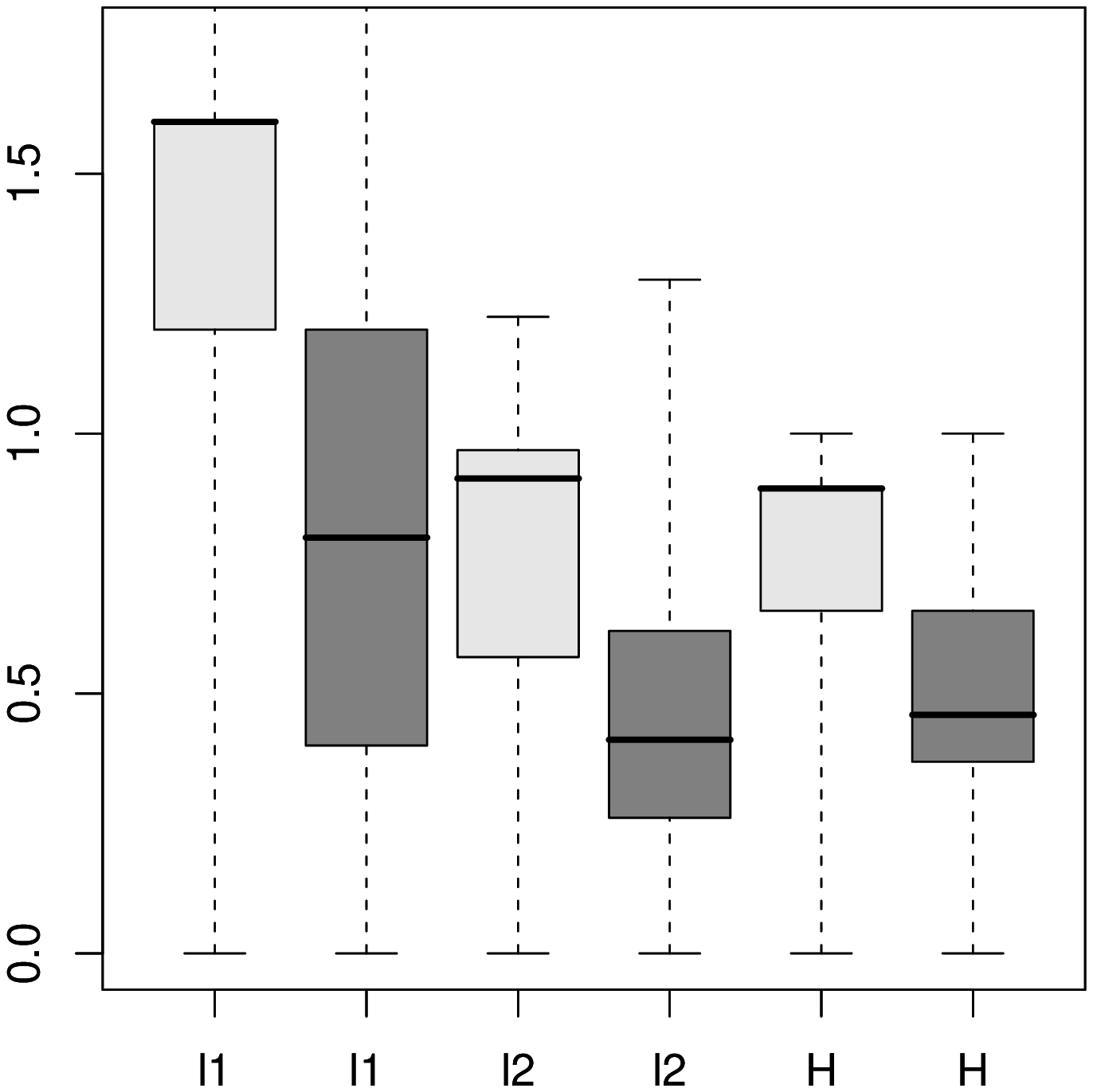}
\includegraphics[width=0.3\textwidth]{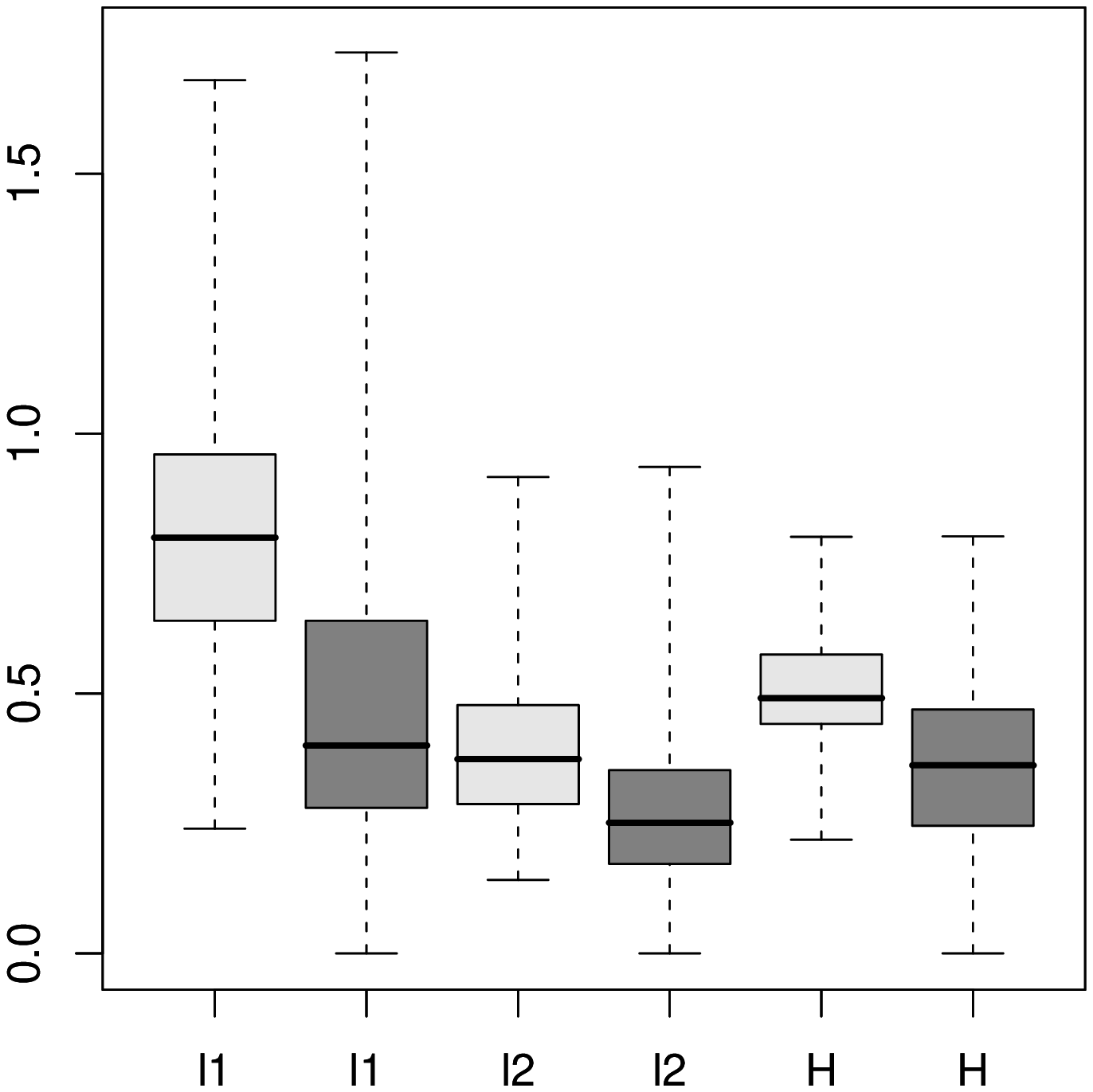}
\includegraphics[width=0.3\textwidth]{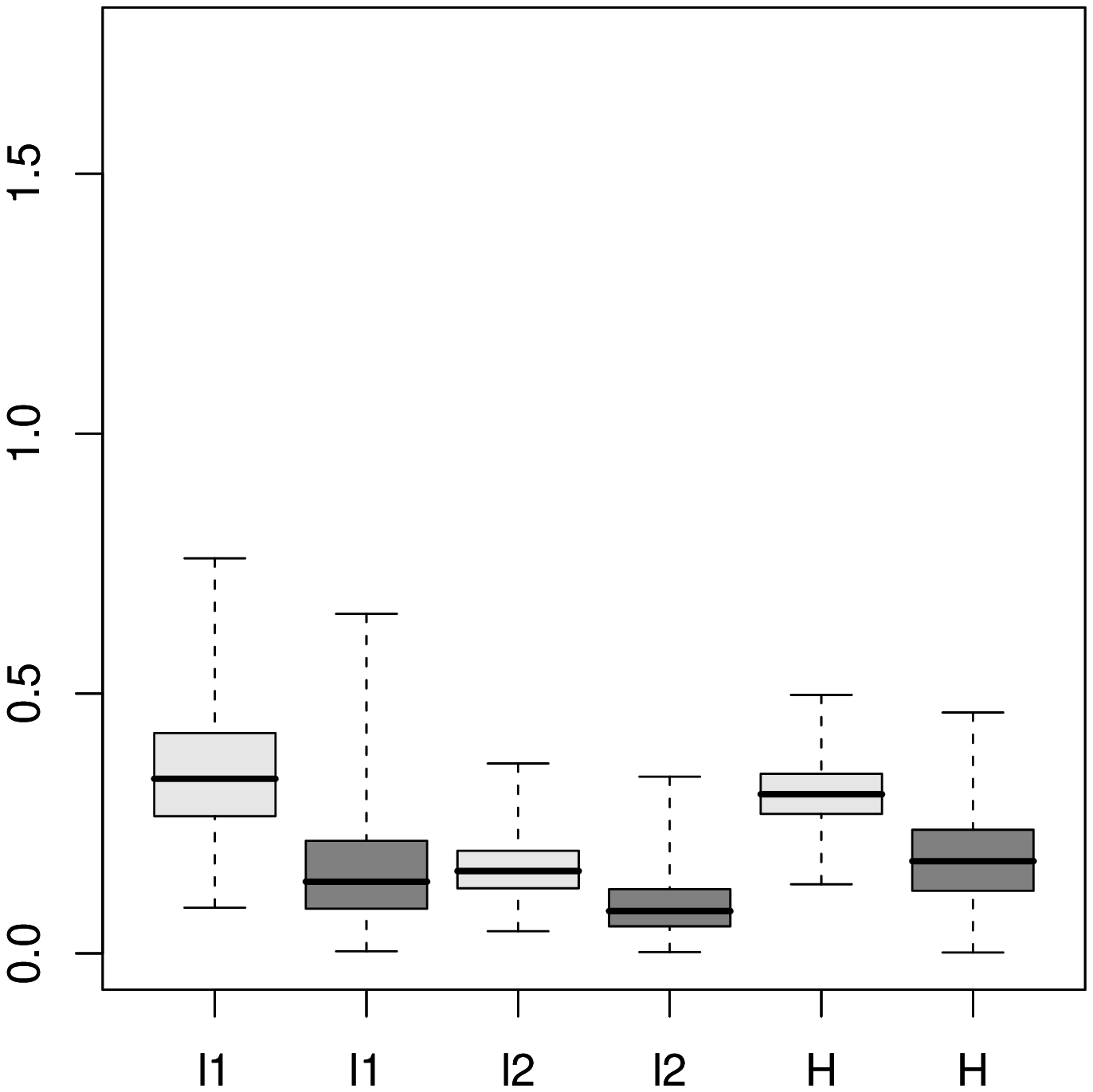}
\includegraphics[width=0.3\textwidth]{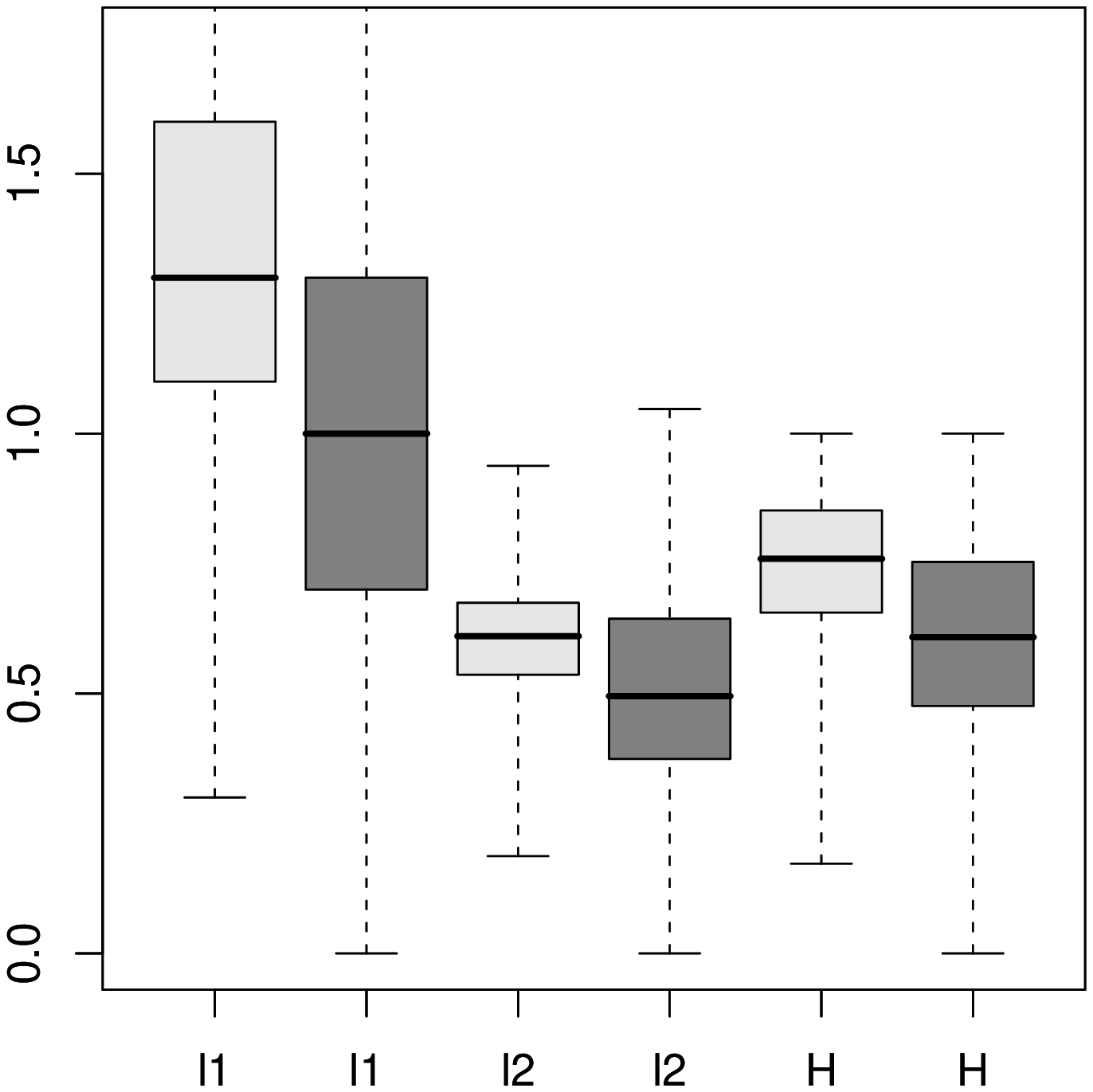}
\includegraphics[width=0.3\textwidth]{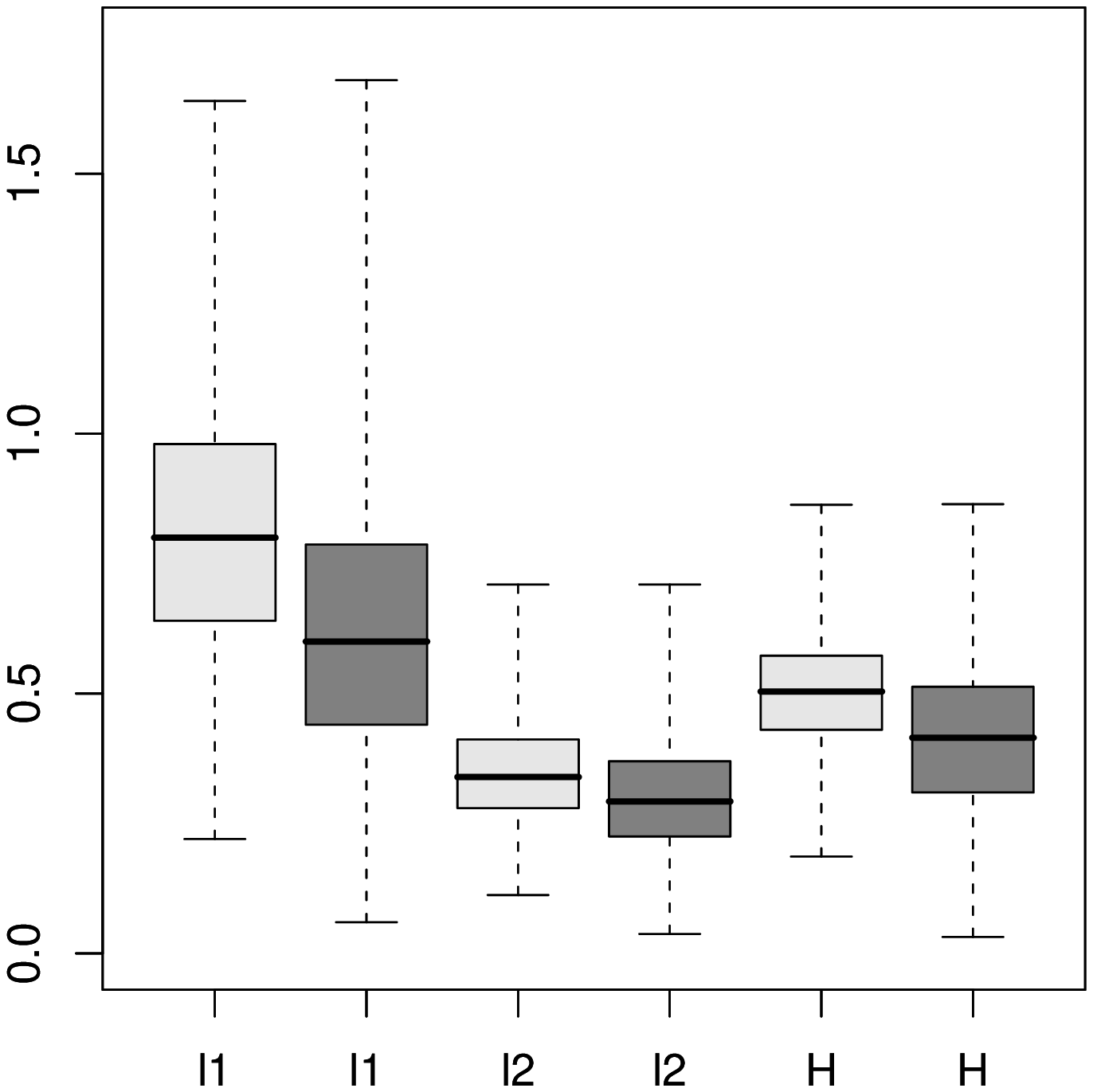}
\includegraphics[width=0.3\textwidth]{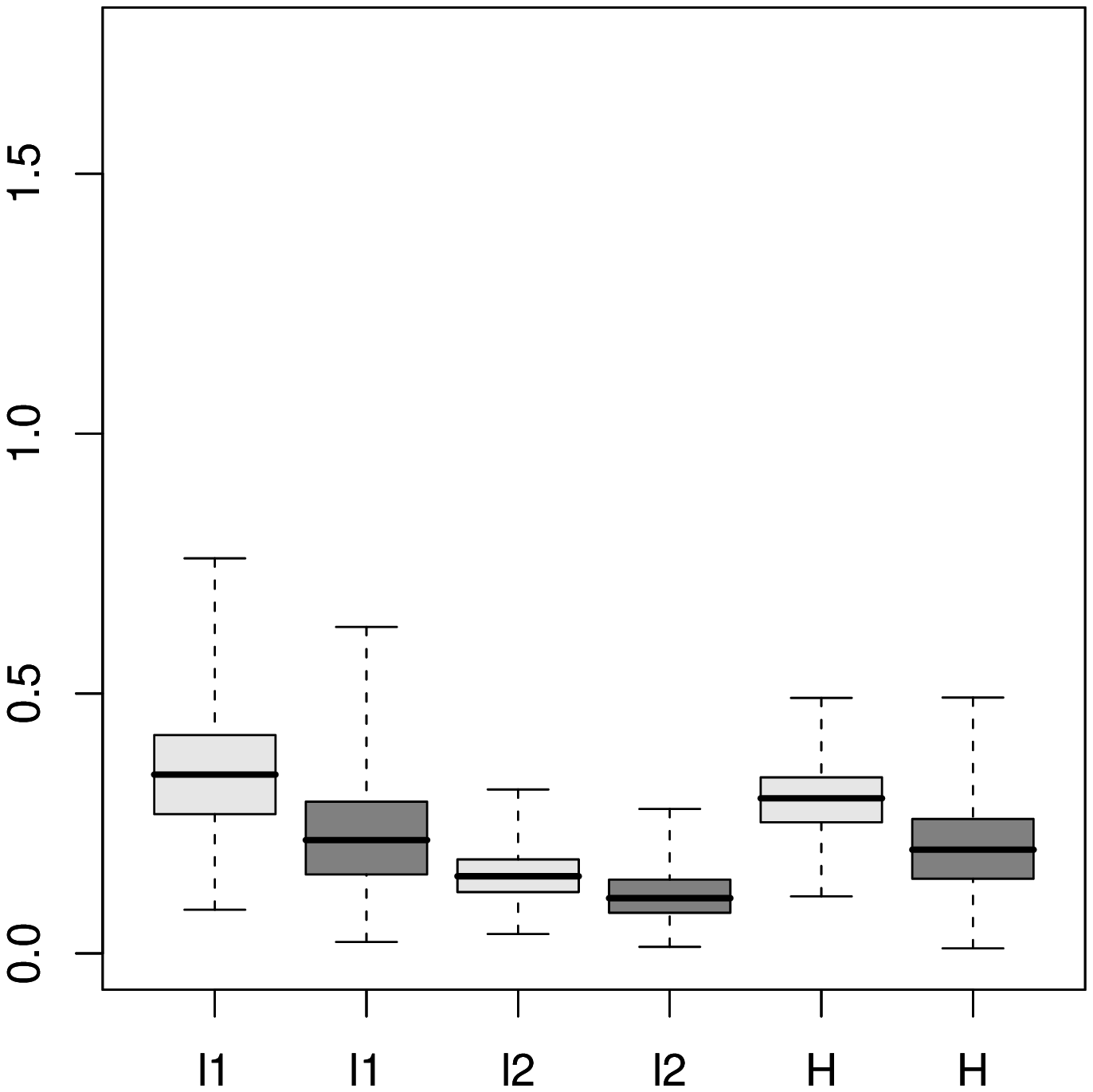}
\includegraphics[width=0.3\textwidth]{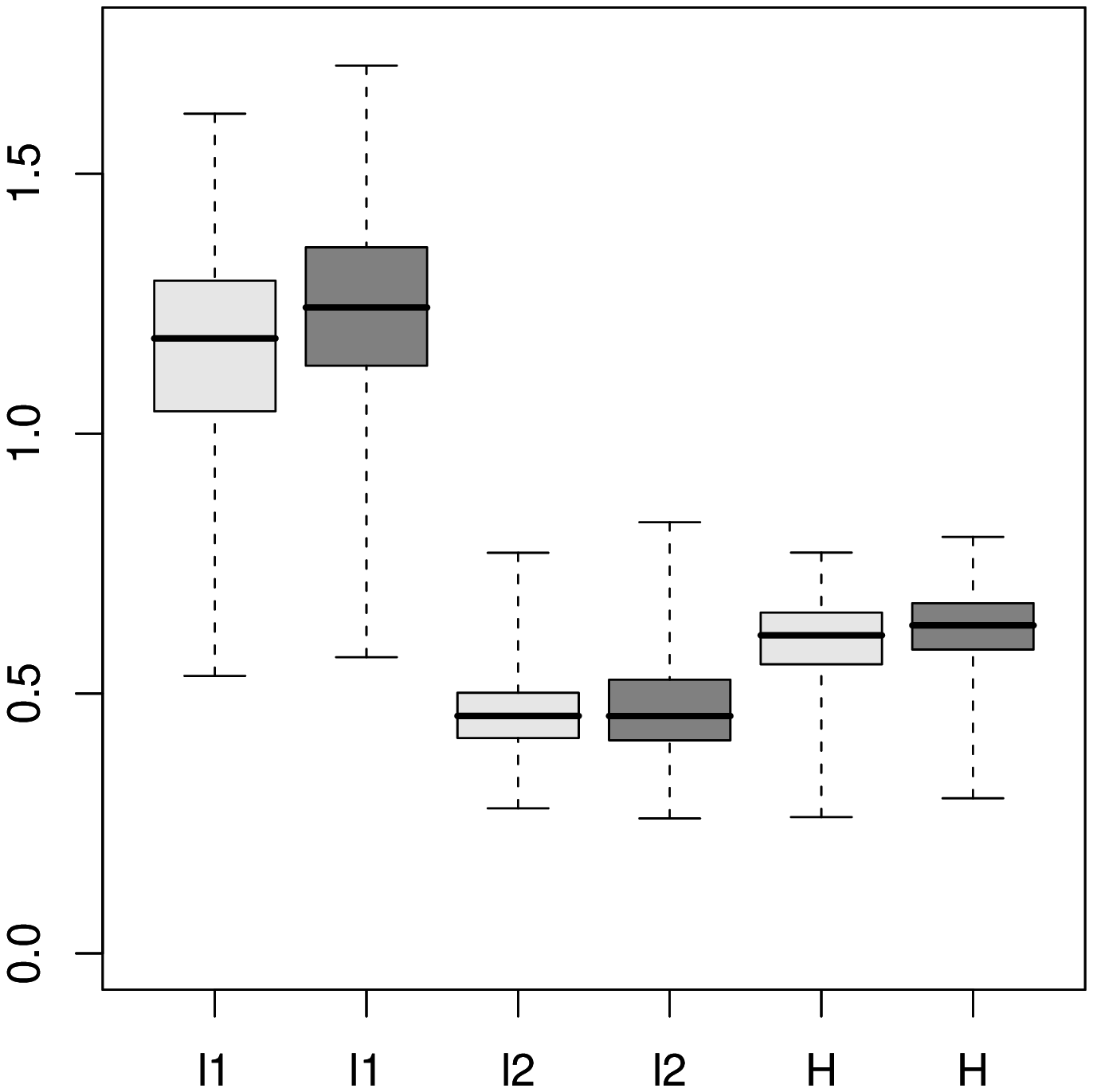}
\includegraphics[width=0.3\textwidth]{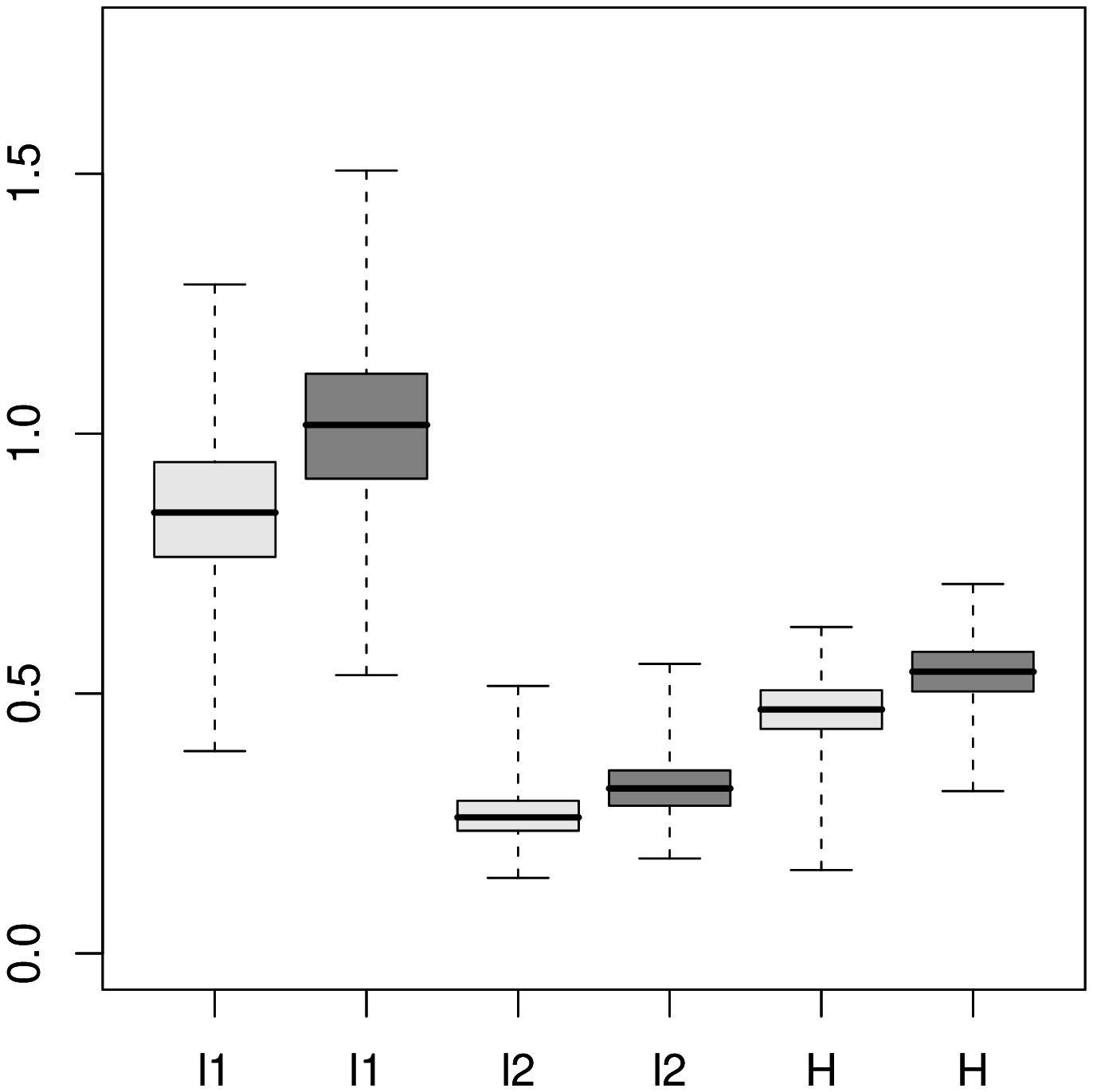}
\includegraphics[width=0.3\textwidth]{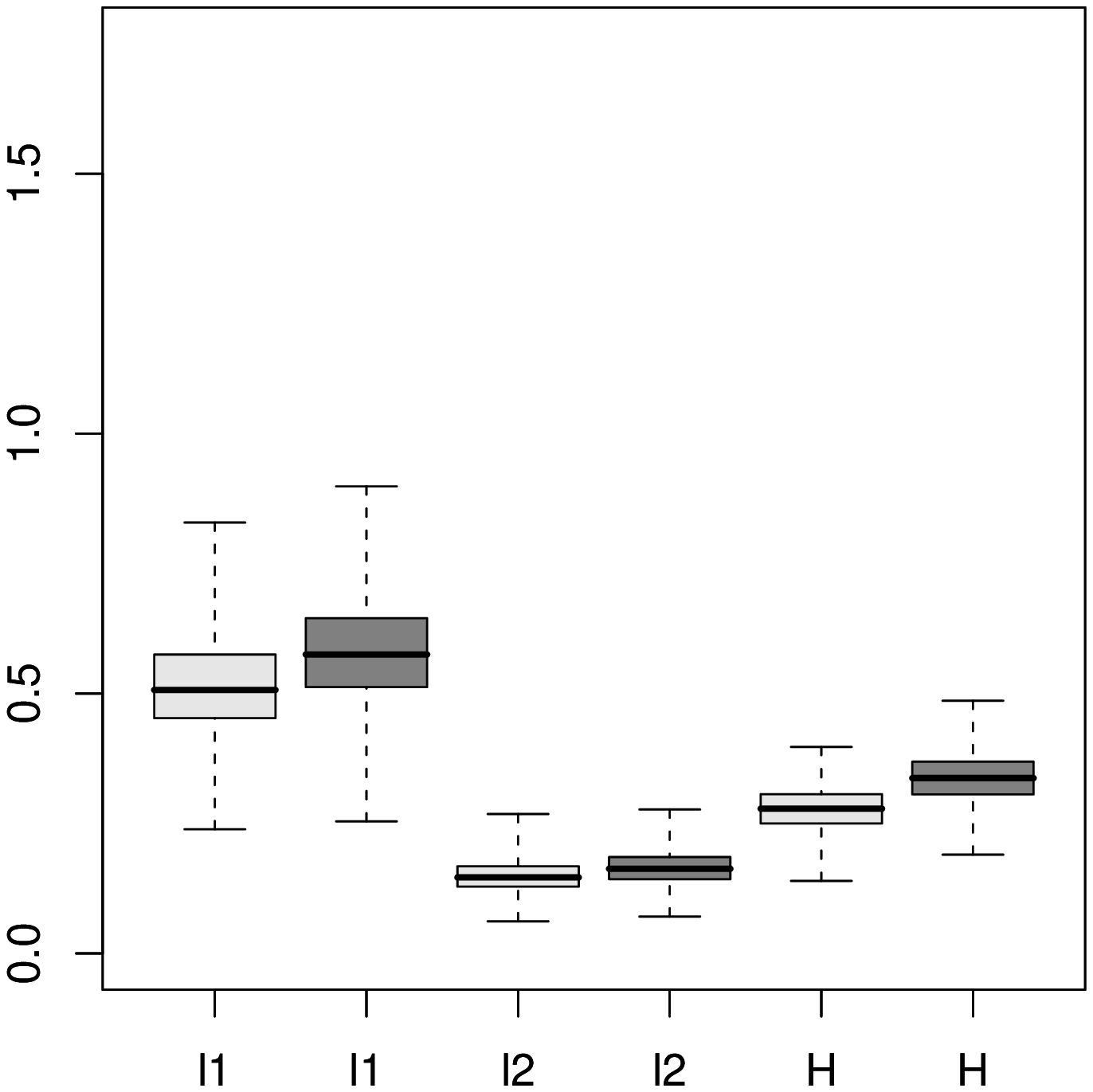}
\caption{Monte Carlo comparison of the estimators
$\widehat q_n^R$ (light grey) and $\widehat q_n^G$ (dark grey).}
\label{fig:mix_box}
\end{figure}

In Figure \ref{fig:mix_box}, we show boxplots of $m=1000$ samples of the distances $\ell_1(\tilde q,q), \ell_2(\tilde q,q)$
and $H(\tilde q,q)$ for $\tilde q=\widehat q_n^R$ (light grey) and
$\tilde q=\widehat q_n^G$ (dark grey) with $n=20$ (left), $n=100$ (centre) and
$n=1000$ (right). From top to bottom the true distributions are
\begin{list}{}
        {\setlength{\topsep}{3pt}
        \setlength{\parskip}{0pt}
        \setlength{\partopsep}{0pt}
        \setlength{\parsep}{0pt}
        \setlength{\itemsep}{0pt}
        \setlength{\leftmargin}{50pt}}
\item[(a)] $p=p^{U(5)}$,
\item[(b)] $p=0.2 p^{U(3)}+0.8 p^{U(7)}$,
\item[(c)] $p=0.25 p^{U(1)}+0.2 p^{U(3)}+0.15 p^{U(5)}+0.4 p^{U(7)}$, and
\item[(d)] $p$ is geometric with $\theta=0.75$.
\end{list}
We can see that $\widehat q_n^G$ has better performance in all metrics,
except for the case of the strictly decreasing distribution.
As before, the flatter the true distribution is, the
better the relative performance of $\widehat q_n^G$.  
Notice that by Corollary \ref{cor:decrasing} and Theorem \ref{thm:mixing_rate} 
the asymptotic behaviour (i.e. rate of convergence and limiting distributions) 
of the $l_2$ norm of $q_n^G$ and $q_n^R$ should be the same if $p$ is strictly decreasing.

\begin{rem}
For $\kappa = \infty$, the process $\{ x Y_{n,x} : x \in \NN\}$ is known to converge weakly
in $\ell_2$ if and only if $ \sum_{x \ge 0} x^2 p_x < \infty$, while the convergence 
is know to hold in $\ell_1$ if and only if $\sum_{x \ge 0} x \sqrt{p_x} < \infty$; 
see e.g. \citet[Exercise 3.8.14, page 205]{MR576407}.
We therefore conjecture that $Z_n^R$ and $Z_n^G$ converge weakly to 
$Z^R$ and $Z^G$ in $\ell_2$ (resp. $\ell_1$) if and only if $\sum_{x\ge 0} x^2 p_x < \infty$  
$\left(\mbox{resp. } \sum_{x \ge 0} x \sqrt{p_x} < \infty\right)$. 
\end{rem}

\section{Proofs}\label{sec:proofs}

\begin{proof}[Proof of Remark \ref{rem:p_k_bound}]
This bound follows directly from the definition of $p$, since
\begin{eqnarray*}
&p_x = \sum_{y > x}^\infty (y+1) q_y \le (x+1)^{-1} \sum_{y\ge x} q_y \le (x+1)^{-1}.&
\end{eqnarray*}
\end{proof}

In the next lemma, we prove several useful properties of both the 
rearrangement and Grenander operators.  

\begin{lem}\label{lem:key_ineq}
Consider two sequences $p$ and $q$ with support $\mathrm S$, and 
let $\varphi(\cdot)$ denote either the Grenander or rearrangement operator.  
That is,  $\varphi(p)=\gren(p)$ or $\varphi(p)=\rear(p)$.
\begin{list}{}
        {\setlength{\topsep}{15pt}
        \setlength{\parskip}{0pt}
        \setlength{\partopsep}{0pt}
        \setlength{\parsep}{0pt}
        \setlength{\itemsep}{6pt}
        \setlength{\leftmargin}{15pt}}
\item[1.]  For any increasing function $f:\mathrm S\mapsto \RR$,
\begin{eqnarray}\label{line:key_ineq1}
\sum_{x\in \mathrm{S}} f_x \varphi(p)_x \leq \sum_{x\in \mathrm{S}} f_x p_x.
\end{eqnarray}
\item[2.]  Suppose that $\Psi:\RR\mapsto \RR_+$ is a non--negative convex function 
such that $\Psi(0)=0$, and that $q$ is decreasing.   Then, 
\begin{eqnarray}\label{line:key_ineq2}
\sum_{x\in \mathrm{S}} \Psi (\varphi(p)_x-q_x) &\leq& \sum_{x\in \mathrm{S}} \Psi (p_x-q_x).
\end{eqnarray}
\item[3.]  Suppose that $|\mathrm S|$ is finite.    Then $\varphi(p)$ is a continuous function of $p$.
\end{list}
  
\end{lem}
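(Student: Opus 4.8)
The plan is to treat the three parts in order, handling both choices $\varphi=\gren$ and $\varphi=\rear$ by first isolating the one structural fact they share. Writing $P_x=\sum_{i\le x}p_i$ and $\Phi_x=\sum_{i\le x}\varphi(p)_i$ for the partial sums, the claim is that $\Phi_x\ge P_x$ for every $x$ while the full sums agree, $\sum_x\varphi(p)_x=\sum_x p_x$. For $\rear$ this is immediate, since $\sum_{i\le x}\rear(p)_i$ is the sum of the $x+1$ largest entries of $p$; for $\gren$ it is precisely the statement that the least concave majorant of the points $(j,P_j)$ lies weakly above them and meets them at the endpoints. Granting this, Part 1 is a summation by parts: with $E_x=\Phi_x-P_x\ge 0$ (and $E_x$ vanishing at the ends),
\[
\sum_{x\in\mathrm S} f_x\big(\varphi(p)_x-p_x\big)=\sum_{x}(f_x-f_{x+1})E_x\le 0 ,
\]
because $f$ increasing gives $f_x-f_{x+1}\le 0$ while $E_x\ge 0$. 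The hypothesis $\Psi(0)=0$ used below plays the analogous harmless role of letting every sum be restricted to $\mathrm S$.

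For Part 2 I would split by operator. For $\varphi=\rear$ I would invoke the rearrangement inequality for convex costs: since $q$ is decreasing, among all permutations $\pi$ of $p$ the quantity $\sum_x\Psi(p_{\pi(x)}-q_x)$ is minimized by the arrangement that sorts $p$ in the same (decreasing) order as $q$, which is exactly $\rear(p)$. This reduces to checking that a single adjacent transposition restoring the order cannot increase the cost, i.e. that $\Psi(v-q_1)+\Psi(u-q_2)\le\Psi(u-q_1)+\Psi(v-q_2)$ whenever $u<v$ and $q_1\ge q_2$; this holds because $t\mapsto\Psi(v-t)-\Psi(u-t)$ is nonincreasing, a direct consequence of the monotonicity of a subgradient of $\Psi$.

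The harder case — and the step I expect to be the main obstacle — is Part 2 for $\varphi=\gren$, where a naive block-wise Jensen argument fails because $q$ is not constant on the level sets of $\gren(p)$. Here I would exploit the explicit block structure of the Grenander estimator: $\mathrm S$ is partitioned into the linear segments $B=\{i,\dots,j\}$ of the LCM, on which $\gren(p)_l\equiv a=\tfrac1{|B|}\sum_{l\in B}p_l$, and on which concavity of the LCM forces the local prefix sums $E_k:=\sum_{l=i}^k(p_l-a)$ to satisfy $E_k\le 0$ with $E_j=0$. On each block, convexity gives $\Psi(p_l-q_l)\ge\Psi(a-q_l)+\psi(a-q_l)(p_l-a)$ for a subgradient $\psi$, so it suffices to show $\sum_{l\in B}\psi(a-q_l)(p_l-a)\ge 0$. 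Since $q$ decreasing makes $a-q_l$ increasing and hence $\psi(a-q_l)$ nondecreasing in $l$, a summation by parts exactly as in Part 1 (now with the nonpositive block sums $E_k$) gives $\sum_{l\in B}\psi(a-q_l)(p_l-a)=\sum_{l}\big(\psi(a-q_l)-\psi(a-q_{l+1})\big)E_l\ge 0$. Summing these block inequalities over all $B$ yields \eqref{line:key_ineq2}.

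For Part 3 I would argue continuity separately and briefly. The map $p\mapsto\rear(p)$ is continuous because each order statistic of a finite vector is a $1$-Lipschitz function of its entries. For $\gren$ I would use that, with uniform weights, $\gren(p)$ is precisely the Euclidean projection of $p$ onto the closed convex cone of decreasing sequences in $\RR^{|\mathrm S|}$; metric projection onto a closed convex set is nonexpansive, hence continuous. Alternatively, each LCM value $\hat P_k$ is a finite maximum of linear functions of the $P_j$, so $\gren(p)_k=\hat P_k-\hat P_{k-1}$ depends continuously on $p$.
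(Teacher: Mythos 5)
Your Part 1 is essentially the paper's own argument: the same two partial-sum facts ($\sum_{x\le y}\varphi(p)_x\ge\sum_{x\le y}p_x$ with equality for the full sum) followed by Abel summation against the increasing $f$. Parts 2 and 3, however, take a genuinely different and correct route. For Part 2 the paper disposes of the Grenander case by citing Theorem 1.6.1 of Robertson, Wright and Dykstra, and handles the rearrangement case by a layer-cake decomposition $\Psi=\Psi_++\Psi_-$, writing $\Psi_\pm$ as integrals of their monotone derivatives, applying Fubini, and then invoking Part 1 with the increasing function $1_{[q_x\le s]}$ together with the fact that an increasing transformation commutes with $\rear$; you instead give a self-contained block argument for $\gren$ (subgradient inequality plus summation by parts against the nonpositive within-block prefix sums $E_k$, which correctly exploits that the LCM majorizes the cumulative sums and touches at block endpoints, so $E_k\le0$ and $E_j=0$) and an adjacent-transposition argument for $\rear$. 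Both are valid; your Grenander argument is a genuine bonus in that it proves, rather than cites, the isotonic-regression inequality, and it correctly identifies why naive blockwise Jensen fails when $q$ varies within a block. The one thing the paper's Fubini route buys that yours does not automatically is the case $|\mathrm S|=\infty$: bubble sort does not terminate on an infinite sequence, so your $\rear$ argument needs a finite-truncation step (the $N$ largest entries of a summable sequence lie among finitely many coordinates, and $\Psi\ge0$ lets you pass to the limit by monotone convergence). For Part 3 the paper appeals to a lemma of Durot and Tocquet on convergence of least concave majorants, whereas your observations --- that $\gren$ with uniform weights is the metric projection onto the closed convex cone of decreasing sequences and hence nonexpansive, or that each LCM ordinate is a finite maximum of affine functions of the partial sums --- are more elementary and arguably cleaner.
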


\begin{proof}
\begin{list}{}
        {\setlength{\topsep}{15pt}
        \setlength{\parskip}{0pt}
        \setlength{\partopsep}{0pt}
        \setlength{\parsep}{0pt}
        \setlength{\itemsep}{6pt}
        \setlength{\leftmargin}{15pt}}
\item[1.] 
Suppose that $\mathrm S=\{s_1, \ldots, s_2\}$, where it is possible that $s_2=\infty.$  
Then it is clear from the properties of the rearrangement and Grenander operators that 
\begin{eqnarray*}
\sum_{x=s_1}^{s_2}\varphi(p)_x  = \sum_{x=s_2}^{s_1} p_x & \mbox{ and } & 
\sum_{x=s_1}^{y}\varphi(p)_x  \geq \sum_{x=s_1}^{y} p_x,
\end{eqnarray*}
for $y\in\mathrm S.$
These inequalities immediately imply \eqref{line:key_ineq1}, since,  by summation by parts, 
\begin{eqnarray*}
\sum_{x=s_1}^{s_2} f_x p_x 
&=& \sum_{x=s_1}^{s_2} \sum_{y=s_1}^{x-1} (f_{y+1} - f_{y}) p_x+ f_{s_1} \sum_{x=s_1}^{s_2}  p_x\\
&=&\sum_{y=s_1}^{s_2} (f_{y+1} - f_{y}) \sum_{x=y+1}^{s_2}  p_x+ f_{s_1} \sum_{x=s_1}^{s_2}  p_x,
\end{eqnarray*}
and $f$ is an increasing function.  
\item[2.] 
For the Grenander estimator this is simply Theorem 1.6.1 in \cite{MR961262}.   
For the rearrangement estimator, we adapt the proof from Theorem~3.5 in \cite{MR1415616}.    
We first write $\Psi = \Psi_+ + \Psi_-,$ where $\Psi_+(x) = \Psi(x)$ for $x\geq 0$ and 
$\Psi_-(x) = \Psi(x)$ for $x\leq 0$.  Now, since $\Psi_+$ is convex, there exists an increasing 
function $\Psi'_+$ such that $\Psi_+(x)= \int_0^x \Psi_+'(t)dt$.  Now, 
\begin{eqnarray*}
\Psi_+(p_x-q_x) &=& \int_{q_x}^{p_x} \Psi_+'(p_x-s) ds = \int_0^\infty \Psi_+'(p_x-s) \bb I_{[q_x\leq s]}ds.
\end{eqnarray*}
Applying Fubini's theorem, we have that 
\begin{eqnarray*}
\sum_{x\in \mathrm S} \Psi_+(p_x-q_x) &=& \int_0^\infty \left\{  \sum_{x\in \mathrm S} 
    \Psi_+'(p_x-s) \bb I_{[q_x\leq s]} \right\}ds.
\end{eqnarray*}
Now, the function $\bb I_{[q_x\leq s]}$ is an increasing function of $x$, and for 
$\varphi(p)=\rear(p),$ for each fixed $s$ we have that 
$\varphi(\Psi_+'(p-s))_x=\Psi_+'(\varphi(p)_x-s)$, since $\Psi_+'$ is an increasing function.   
Therefore, applying \eqref{line:key_ineq1}, we find that the last display above is bounded below by 
\begin{eqnarray*}
\int_0^\infty \left\{  \sum_{x\in \mathrm S} \Psi_+'(\varphi(p)_x-s) \bb I_{[q_x\leq s]} \right\}ds 
&=& \sum_{x\in \mathrm S} \Psi_+(\varphi(p)_x-q_x).
\end{eqnarray*}
The proof for $\Psi_-$ is the same, except that here we use the identity
\begin{eqnarray*}
\Psi_-(p_x-q_x) &=&  \int_0^\infty \Psi_-'(p_x-s) \left\{-\bb I_{[q_x\geq s]}\right\}ds.
\end{eqnarray*}
\item[3.]   Since $|\mathrm S|$ is finite, we know that $p$ is a finite vector, 
and therefore it is enough to prove continuity at any point $x\in \mathrm S.$  
For $\varphi=\rear$ this is a well--known fact.  Next, note that if $p_n\rightarrow p$, 
then the partial sums of $p_n$ also converge to the partial sums of $p$.  
From Lemma~2.2 of 
\cite{durot:tocquet:03}, it follows that the least concave majorant of $p_n$ 
converges to the least concave majorant of $p$, and hence, so do their differences.  
Thus $\varphi(p_n)_x\rightarrow \varphi(p)_x.$
\end{list}
\end{proof}

\subsection{Some inequalities and consistency results: proofs}

\begin{proof}[Proof of Theorem \ref{thm:BasicInequalities}]
\begin{list}{}
        {\setlength{\topsep}{15pt}
        \setlength{\parskip}{0pt}
        \setlength{\partopsep}{0pt}
        \setlength{\parsep}{0pt}
        \setlength{\itemsep}{6pt}
        \setlength{\leftmargin}{15pt}}
\item[(i).]
Choosing $\Psi(t)=|t|^k$ in \eqref{line:key_ineq2} of Lemma \ref{lem:key_ineq} 
proves \eqref{ineq:ellk}.  To prove \eqref{ineq:hellinger} recall that
\begin{eqnarray*}
H^2(\tilde p, p)&=& 1 - \sum_{x\geq 0}\sqrt{\tilde p_x p_x}.
\end{eqnarray*}
By \cite{MR0046395}, Theorem 368, page 261, (or Theorem 3.4 in \cite{MR1415616}) it follows that
$$
\sum_{x\geq 0} \sqrt{\widehat{p}_{n,x} p_x} \leq \sum_{x\geq 0} \sqrt{\widehat{p}_{n,x}^R p_x},
$$
which proves the result for the rearrangement estimator.   It remains to 
prove the same for the MLE.  Let $\{B_i\}_{i\geq 1}$ denote a partition of $\NN$.  By definition,
\begin{eqnarray*}
\widehat p_{n,x}^G = \frac{1}{|B_i|}\sum_{x\in B_i}\widehat p_{n,x}, \ \ x\in B_i
\end{eqnarray*}
for some partition.  Jensen's inequality now implies that
$$
\sum_{x\in B_i}\sqrt{\widehat p_{n,x}}\geq \sum_{x\in B_i}\sqrt{\widehat p_{n,x}^G},
$$
which completes the proof.
\item[(ii).] is obvious.
\item[(iii).] The second statement is obvious in light of \eqref{ineq:ellk} with $k=\infty$.  
To see that the probability of monotonicity of the  $\widehat{p}_{n,x}$'s  converges to $1/(y+1)!$ 
under the uniform distribution, note that the event in question is that same as the event 
that the components of the vector  $( \sqrt{n} ( \widehat{p}_{n,x} - (y+1)^{-1} ) : x \in \{ 0, \ldots , y \} \}$ 
are ordered in the same way.  This vector converges in distribution to $Z \sim N_{y+1} ( 0 , \Sigma)$ 
where $\Sigma = \mbox{diag} ( 1/(y+1)) - (y+1)^{-2} \underline{1} \underline{1}^T $, 
and the probability $P( Z_1 \ge Z_2 \ge \cdots \ge Z_{y+1} ) = 1/(y+1)!$ 
since the components of $Z$ are exchangeable.
\end{list}
\end{proof}

\begin{proof}[Proof of Corollary \ref{cor:risk_finite}]
For any $p\in \mc P,$ we have that 
\begin{eqnarray*}
n R_{2}(p, \widehat p_n^R) \leq n R_{2}(p, \widehat p_n) = 1-\sum_{x\geq 0} p_x^2 &\leq & 1.
\end{eqnarray*}
Plugging in the discrete uniform distribution on $\{0, \ldots, \kappa\}$, and 
applying part (ii) of Theorem \ref{thm:BasicInequalities}, we find that\begin{eqnarray*}
n R_2(p, \widehat p_n^R) = n R_2(p, \widehat p_n) = 1-(\kappa+1)^{-1}.
\end{eqnarray*}
Thus, for any $\eps>0$, there exists a $p\in \mc P$, such that 
\begin{eqnarray*}
nR_2(p, \widehat p_n^R) = nR_2(p, \widehat p_n) \geq 1-\eps.
\end{eqnarray*}
Since the upper bound on both risks is one, the result follows.   
\end{proof}

\begin{proof}[Proof of Theorem \ref{thm:GlobalConsistencyTheorem}]
The results of this theorem are quite standard, and we provide a proof only for 
completeness.  Let $\FF_n$ denote the empirical distribution function and $F$ 
the cumulative distribution function of the true distribution $p$.  For any $K$ (large), we have that for any $x> K$,
\begin{eqnarray*}
|\widehat p_{n,x}-p_x|&\leq & \widehat p_{n,x}+p_x\\
                  &\leq & (1-\FF_n(K))+(1-F(K))\\
                  &\leq & |\FF_n(K)-F(K)|+2(1-F(K)).
\end{eqnarray*}
Fix $\eps>0$, and choose $K$ large enough so that $(1-F(K))<\eps/6$.  
Next, there exists an $n_0$ sufficiently large so that 
$\sup_{0\leq x \leq K}|\widehat p_{n,x}-p_x|< \eps/3$ and $|\FF_n(K)-F(K)|<\eps/3$ 
for all $n\geq n_0$ almost surely.  Therefore, for $n\geq n_0$
\begin{eqnarray*}
\sup_{x\geq 0}|\widehat p_{n,x}-p_x|
&\leq &\sup_{0\leq x \leq K}|\widehat p_{n,x}-p_x|+|\FF_n(K)-F(K)|+2(1-F(K))\\
&<& \eps.
\end{eqnarray*}
This shows that $||\widehat p_n-p||_k\rightarrow 0$ almost surely for $k=\infty.$  
A similar approach proves the result for any $1\leq k <\infty.$  Convergence of 
$H(\widehat p_n, p)$ follows since for mass functions $H(p,q)\leq \sqrt{||p-q||_1}$ 
(see e.g. \cite{lecam:69}, page 35).  Consistency of the other estimators, 
$\widehat p_n^R$ and $\widehat p_n^G$ now follows from the inequalities 
of Theorem \ref{thm:BasicInequalities}.
\end{proof}

\begin{proof}[Proof of Corollary \ref{cor:glivenko}]
Note that by virtue of the estimators, we have that $\widehat F_n^R(x) \geq \FF_n(x)$ 
and $\widehat F_n^G(x) \geq \FF_n(x)$ for all $x\geq0.$  Now, fix $\eps>0.$  
Then there exists a $K$ such that  $\sum_{x>K}p_x < \eps/4.$  
By the Glivenko-Cantelli lemma, there exists an $n_0$ such that for all $n\geq n_0$
\begin{eqnarray*}
\sup_{x\geq 0}\left|\FF_n(x)-F(x)\right| < \eps/4,
\end{eqnarray*}
almost surely.  Furthermore, by Theorem \ref{thm:GlobalConsistencyTheorem}, $n_0$ 
can be chosen large enough so that for all $n\geq n_0$
\begin{eqnarray*}
\sup_{x\geq 0} \left|\widehat p_{n,x}^G-p_x\right| < \eps/4(K+1),
\end{eqnarray*}
almost surely.  Therefore,  for all $n\geq n_0$, we have that
\begin{eqnarray*}
\sup_{x\geq 0} |\widehat F_n^G(x)-F(x)| 
&\leq & \sum_{x=0}^K \left|\widehat p_{n,x}^G-p_x\right| + \sum_{x>K} \widehat p_{n,x}^G + \sum_{x>K}p_x\\
&\leq & \eps/4 + \sum_{x>K} \widehat p_{n,x} + \eps/4\\
&\leq & \eps/4 + \sum_{x>K} p_x + \eps/4 + \eps/4
\leq  \eps.
\end{eqnarray*}
The proof for the rearrangement estimator is identical.
\end{proof}

\subsection{Limiting distributions: proofs}

\begin{lem}\label{lem:tight}
Let $W_n$ be a sequence of processes in $\ell_k$ with $1\leq k < \infty$.  Suppose that
\newcounter{count4}
\begin{list}{\arabic{count4}.}
        {\usecounter{count4}
        \setlength{\topsep}{6pt}
        \setlength{\parskip}{0pt}
        \setlength{\partopsep}{0pt}
        \setlength{\parsep}{0pt}
        \setlength{\itemsep}{3pt}
        \setlength{\leftmargin}{25pt}}
\item $\sup_n E[||W_n||_k^k]<\infty$,
\item $\lim_{m\rightarrow\infty}\sup_n \sum_{x\geq m}E[|W_{n,x}|^k]=0$.
\end{list}  Then $W_n$ is tight in $\ell_k$.
\end{lem}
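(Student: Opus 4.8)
The plan is to verify tightness directly from its definition: for each $\eps>0$ I will exhibit a compact set $K_\eps\subseteq\ell_k$ with $\inf_n P(W_n\in K_\eps)\geq 1-\eps$. The crucial analytic input is the compactness criterion in $\ell_k$ (a Fr\'echet--Kolmogorov type statement, valid because $\ell_k$ is a separable Banach space for $1\leq k<\infty$): a closed bounded set $A\subseteq\ell_k$ is compact if and only if it has uniformly negligible tails, i.e. $\sup_{a\in A}\sum_{x\geq m}|a_x|^k\to 0$ as $m\to\infty$. The two hypotheses are tailored precisely to control these two features in probability, and Markov's inequality is what converts the moment bounds into the required probability estimates.

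First I would use hypothesis~1 together with Markov's inequality to bound the norm: writing $C=\sup_n E[\|W_n\|_k^k]$, we have $P(\|W_n\|_k^k>R)\leq C/R$ for every $n$, so choosing $R=R_\eps$ large makes this at most $\eps/2$ uniformly in $n$. Next I would use hypothesis~2 to control the tails. For each $j\geq 1$, hypothesis~2 lets me pick an index $m_j$ with $\sup_n\sum_{x\geq m_j}E[|W_{n,x}|^k]<\eps\,2^{-2j-2}$. Setting $\delta_j=2^{-j}$ and applying Markov's inequality gives $P\big(\sum_{x\geq m_j}|W_{n,x}|^k>\delta_j\big)\leq \delta_j^{-1}\sup_n\sum_{x\geq m_j}E[|W_{n,x}|^k]<\eps\,2^{-j-2}$, and summing over $j$ yields $\sum_{j\geq1}P\big(\sum_{x\geq m_j}|W_{n,x}|^k>\delta_j\big)<\eps/2$ for every $n$.

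Then I would assemble the candidate set
\[
K_\eps=\Big\{w\in\ell_k:\ \|w\|_k^k\leq R_\eps,\ \ \sum_{x\geq m_j}|w_x|^k\leq \delta_j\ \text{for all }j\geq1\Big\}.
\]
This set is closed and bounded, and since $\delta_j=2^{-j}\to 0$ it has uniformly small tails; hence by the $\ell_k$ compactness criterion it is compact. A union bound combining the two estimates above gives $P(W_n\notin K_\eps)\leq \eps/2+\eps/2=\eps$ for every $n$, which is exactly the assertion that $\{W_n\}$ is tight.

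The one step that must be invoked with care is the $\ell_k$ compactness criterion itself; everything else is bookkeeping with Markov's inequality and a geometric choice of thresholds. In particular, one should check that $K_\eps$ is genuinely compact rather than merely bounded: closedness follows from continuity of the coordinate and norm functionals, while the intersected tail constraints supply the uniform tail decay that upgrades boundedness to total boundedness. This is precisely where the separability and completeness of $\ell_k$ for $1\leq k<\infty$ are used, and it is the reason the criterion fails for $k=\infty$.
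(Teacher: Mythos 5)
Your proof is correct and follows essentially the same route as the paper's: both exhibit an explicit compact set in $\ell_k$ via the ``bounded plus uniformly negligible tails'' compactness criterion and then verify $P(W_n\in K_\eps)\geq 1-\eps$ uniformly in $n$ by converting the two moment hypotheses into probability bounds. The only cosmetic difference is that the paper's compact set is cut out by coordinatewise bounds $|w_x|\leq A_x$ together with tail bounds $\lambda_m$, whereas you use a single norm bound $R_\eps$ plus tail bounds $\delta_j$; your write-up actually supplies the Markov-inequality and union-bound details that the paper compresses into ``clearly.''
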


\begin{proof}
Note that for $k<\infty$, compact sets $K$ are subsets of $\ell_k$ such that there exists a sequence of real numbers $A_x$ for $x\in \NN$ and a sequence $\lambda_m\rightarrow 0$ such that
\newcounter{count3}
\begin{list}{\arabic{count3}.}
        {\usecounter{count3}
        \setlength{\topsep}{6pt}
        \setlength{\parskip}{0pt}
        \setlength{\partopsep}{0pt}
        \setlength{\parsep}{0pt}
        \setlength{\itemsep}{3pt}
        \setlength{\leftmargin}{25pt}}
\item $|w_x|\leq A_x$ for all $x\in \NN$,
\item $\sum_{k\geq m}|w_x|^k\leq \lambda_m$ for all $m$,
\end{list}
for all elements $w\in K.$   Clearly, if the conditions of the lemma are satisfied, then for each $\eps>0$, we have that
\begin{eqnarray*}
P\left(|W_{n,x}|\leq A_x \mbox{ for all } x\geq 0, \mbox{ and } 
\sum_{x\geq m}|W_{n,x}|^k \leq \lambda_m \mbox{ for all }m \right)\geq 1-\eps
\end{eqnarray*}
for all $n$.  Thus, $W_n$ is tight in $\ell_k$.
\end{proof}

\begin{proof}[Proof of Theorem \ref{thm:l2Gaussian}]
Convergence of the finite dimensional distributions is standard.  
It remains to prove tightness in $\ell_2$.  By Lemma \ref{lem:tight} this is straightforward, since
\begin{eqnarray*}
E[||Y_n||^2_2] =  \sum_{x\geq0}p_x(1-p_x) \ \ \mbox{ and } \ \ 
\sum_{x\geq m}E\left[Y_{n,x}^2\right]=\sum_{x\geq m}p_x(1-p_x).
\end{eqnarray*}
\end{proof}

Throughout the remainder of this section we make extensive use of a set 
equality for the least concave majorant known as the ``switching relation".  Let
\begin{eqnarray}
\widehat{s}_n (a)
& = & \inf \left \{ k \ge -1 : \ \FF_n (k) - a (k+1) = \sup \{ \FF_n (y) - a(y+1) \} \right \}\notag\\
&\equiv& \argmax_{k\geq -1}^L\{\FF_n (k) - a (k+1)\}
\label{CarefulDefnOfArgmaxProcessDiscreteCase}
\end{eqnarray}
denote the first time that the process $\FF_n(y)-a(y+1)$ reaches its maximum.  Then the following holds
\begin{eqnarray}
\{\widehat s_n(a)< x\}&=& \{\widehat s_n(a)\leq x-1/2\}\notag\\
                  &=& \{\widehat p_{n,x}^G < a\}. \label{line:switching}
\end{eqnarray}
For more background (as well as a proof) of this fact see, for example, \cite{bjpsw:09}.

\begin{proof}[Proof of Proposition \ref{prop:flat}]
Let $F$ denote the cumulative distribution function for the function $p$.  
For fixed $t \in \RR$ it follows from \eqref{line:switching} that
\begin{eqnarray}
P(Y_{n,x}^G< t)
&=& P(\widehat{s}_n(p_x + n^{-1/2}t)\leq x-1/2)\nonumber\\
& = & P( \mbox{argmax}^L_{y \geq -1} \{Z_n(y)\}\leq x-1/2)\label{line:calc1}
\end{eqnarray}
where $Z_n(y)=n^{1/2} \FF_n (y)- (n^{1/2} p_x + t)(y+1).$  
Note that for any constant $c$, $\argmax^L(Z_n(y))=\argmax^L(Z_n(y)+c)$, 
and therefore we instead take
\begin{eqnarray*}
Z_n(y)&=& n^{1/2}(\FF_n (y)-\FF_n(r-1))- (n^{1/2} p_x + t)(y-r+1)\\
      &=& V_n(y)+W_n(y)-t(y-r+1),
\end{eqnarray*}
where
\begin{eqnarray*}
V_n(y)&=& \sqrt{n}\left((\FF_n(y)-\FF_n(r-1))-(F(y)-F(r-1))\right),\\
n^{-1/2}W_n(y)&=&(F(y)-F(r-1))-p_x(y-r+1)\ \\
&=&\left\{
\begin{array}{ll}
=0 & \mbox{for} \ \ r-1 \leq y \leq s,\\
<0 & \mbox{otherwise.}
\end{array}\right.
\end{eqnarray*}
Let $\UU$ denote the standard Brownian bridge on $[0,1].$  
It is well-known that $V_n(y)\Rightarrow \UU(F(y))-\UU(F(r-1))$.  
Also, $W_n(y)\rightarrow\infty$ for $y\notin\{r-1, \ldots, s\},$ and it is identically zero otherwise.  
It follows that the limit of \eqref{line:calc1} is
\begin{eqnarray*}
&&P\left(\argmax^L_{r-1 \leq y \leq s}\{\UU(F(y))-\UU(F(r-1))-t(y-r+1)\} \leq x-1/2\right)\\
&=& P\left(\argmax^L_{r-1 \leq y \leq s}\{\UU(F(y))-\UU(F(r-1))-t(y-r+1)\} < x\right),
\end{eqnarray*}
for any $x\in\{r, \ldots, s\}.$  Note that the process
\begin{eqnarray*}
\{\UU(F(x))-\UU(F(r-1)), x=r-1, \ldots, s\}&=_d& \left\{\sum_{j=r}^x Y_j, x=r-1, \ldots, s\right\},
\end{eqnarray*}
and therefore the probability above is equal to
\begin{eqnarray*}
P\left(\gren(Y^{(r,s)})_x<t\right)
\end{eqnarray*}
for $x\in\{r, \ldots, s\}.$  Since the half-open intervals $[a,b)$ are 
convergence determining, this proves pointwise convergence of $Y_{n,x}^G$ to $\gren(Y)_x.$

To show convergence of the rearrangement estimator fluctuation process, 
note that for sufficiently large $n$ we have that 
$\widehat p_{n,r-k} > \widehat p_{n,x} > \widehat p_{n, s+k}$ for all 
$x\in \{r, \ldots, s\}$ and $k\geq 1$.  Therefore, 
$(\widehat p_n^R)^{(r,s)} = \rear((\widehat p_n)^{(r,s)})$ and furthermore, 
since $p_x$ is constant here,  $(Y_n^R)^{(r,s)} = \rear(Y_n^{(r,s)})$.  
The result now follows from the continuous mapping theorem.
\end{proof}

\begin{proof}[Proof of Proposition \ref{prop:carolandykstra}]
To simplify notation, let $\WW_m = \UU(F(m-r+1))-\UU(F(r-1))$ for 
$m=0, \ldots, s-r+1$.  Also, let $\theta=p_r=\ldots=p_s$ and then 
$G_m = F(m-r+1)-F(r-1)=\theta m$. Write
\begin{eqnarray*}
\WW_m &=& \frac{G_m}{G_s}\WW_s+ \left\{\WW_m - \frac{G_m}{G_s}\WW_s\right\} 
= \frac{m}{\bar s}\WW_s+ \left\{\WW_m - \frac{m}{\bar s}\WW_s\right\},
\end{eqnarray*}
where $\bar s=s-r+1$.  Let $\widetilde \WW_m=\WW_m - m\WW_s/\bar s$.  
Then $\widetilde \WW_0=\widetilde \WW_{\bar s}=0$ and 
some calculation shows that $E[\widetilde \WW_m]=0$ and
\begin{eqnarray*}
\cov(\widetilde \WW_m, \widetilde \WW_{m'})
&=& \theta \bar s \left\{\min\left(\frac{m}{\bar s},\frac{m'}{\bar s}\right)-\frac{m}{\bar s}\frac{m'}{\bar s}\right\}.
\end{eqnarray*}
Also, $\cov(\widetilde \WW_m, \WW_s)=0.$  
Let $Z$ be a standard normal random variable independent 
of the standard Brownian bridge $\UU$.  We have shown that
\begin{eqnarray*}
\WW_m &=_d&  \frac{m}{\bar s} \sqrt{\theta \bar s(1-\theta \bar s)}\ Z + \sqrt{\theta \bar s}\UU\left(\frac{m}{\bar s}\right).
\end{eqnarray*}
Next, let $\widetilde Y_m = \UU\left(\frac{m}{\bar s}\right)-\UU\left(\frac{m-1}{\bar s}\right)$ 
for $m=1, \ldots, \bar s$.  The vector $\widetilde Y = (\widetilde Y_1, \ldots, \widetilde Y_{\bar s})$ 
is multivariate normal with mean zero and 
$\cov(\widetilde Y_m,\widetilde Y_{m'})=\delta_{m, m'}/\bar s - 1 /(\bar s)^2.$  
To finish the proof, note that $\gren(c + \widetilde Y)=c+\gren(\widetilde Y)$ for any constant $c$.
\end{proof}

\begin{proof}[Proof of Proposition \ref{prop:monotone}]
The claim for the rearrangement estimator follows directly from Theorem 
\ref{thm:GlobalConsistencyTheorem} for $k=\infty$.  
To prove the second claim, we will show that 
$Y_{n,x}^G-Y_{n,x}=\sqrt{n}(\widehat p_{n,x}^G- \widehat p_{n,x})\stackrel{p}{\rightarrow}0$.  
To do this, we again use the switching relation \eqref{line:switching}.


Fix $\eps>0$.  Then
\begin{eqnarray}
P(Y_{n,x}^G-Y_{n,x}\geq\eps)
&=& P(\widehat{p}_{n,x}^G \geq \widehat{p}_{n,x} + n^{-1/2}\eps)\notag\\
&=& P(\widehat{s}_n (\widehat{p}_{n,x} + n^{-1/2} \eps) \geq x-1/2)\notag\\
&=& P(\argmax^L_{y\geq -1} \widetilde Z_n(y-x)\geq x-1/2)\notag\\
&=& P(\argmax^L_{h\geq -x-1} \widetilde Z_n(h)\geq -1/2),\label{line:calc2}
\end{eqnarray}
where $\widetilde Z_n(h)=n^{1/2} \FF_n (x+h)- (n^{1/2}\widehat p_{n,x} + t)(x+h+1).$  
Since for any constant $c$, $\argmax^L(\widetilde Z_n(y))=\argmax^L(\widetilde Z_n(y)+c)$, we instead take
\begin{eqnarray*}
\widetilde Z_n(h)
&=& n^{1/2}(\FF_n (x+h)-\FF_n(x-1))- (n^{1/2} \widehat p_{n,x} + \eps)(h+1)\\
&=& U_n(h)+V_n(h)+W_n(h)-\eps(h+1),
\end{eqnarray*}
where
\begin{eqnarray*}
U_n(h)&=& \sqrt{n}\left((\FF_n(x+h)-\FF_n(x-1))-(F(x+h)-F(x-1))\right),\\
(h+1)^{-1}V_n(h)&=& \sqrt{n}\left((\FF_n(x)-\FF_n(x-1))-(F(x)-F(x-1))\right),\\
n^{-1/2}W_n(h)&=&(F(x+h)-F(x-1))-p_x(h+1)\ \\
&=&\left\{
\begin{array}{ll}
=0 & \mbox{for} \ \ h=-1,0,\\
<0 & \mbox{otherwise.}
\end{array}\right.
\end{eqnarray*}
Let $\UU$ denote the standard Brownian bridge on $[0,1].$  
It is well-known that $U_n(h)\Rightarrow \UU(F(x+h))-\UU(F(x-1))$ and 
$V_n(h)\Rightarrow (h+1)(\UU(F(x))-\UU(F(x-1)))$. Also, $W_n(y)=0$ at $y=-1,0$ 
and $W_n(y)\rightarrow\infty$ for $y\notin\{-1,0\}.$  Define
\begin{eqnarray*}
\ZZ(h)= \UU(F(x+h))-\UU(F(x-1))+(h+1)(\UU(F(x))-\UU(F(x-1))),
\end{eqnarray*}
and notice that $\ZZ(0)=\ZZ(-1)=0$.  It follows that the limit of \eqref{line:calc2} is
\begin{eqnarray*}
P\left(\argmax^L_{y=-1,0}\{\ZZ(h)-\eps(h+1)\} \geq -1/2\right)&=& 0,
\end{eqnarray*}
since $\argmax^L_{y=-1,0}\{\ZZ(h)-\eps(h+1)\}=-1.$  A similar argument proves that
\begin{eqnarray*}
\lim_{n\rightarrow\infty}P(Y_{n,x}^G-Y_{n,x}<\eps)=0,
\end{eqnarray*}
showing that $Y_{n,x}^G-Y_{n,x}=o_p(1)$ and completing the proof.
\end{proof}

\begin{proof}[Proof of Theorem \ref{thm:process}]
Let $\varphi$ denote an operator on sequences in $l_2.$
Specifically, we take $\varphi=\gren$ or $\varphi=\rear$.
Also, for a fixed mass function $p$ let
$\mc T_p = \{x\geq 0: p_x-p_{x+1}>0\}=\{\tau_i\}_{i\geq 1}$.
Next,  define $\varphi_p$ to be the local version of the $\varphi$ operator.
That is, for each $i\geq 1$,
$\varphi_p(q)_x = \varphi(p^{(\tau_i+1, \tau_{i+1})})_x$
for all $\tau_{i}+1\leq x \leq \tau_{i+1}.$

Fix $\eps>0,$ and suppose that $q_n \rightarrow q$ in $\ell_2.$  
Then there exists a $K\in \mc T_p$ and an $n_0$ such that 
$\sup_{n\geq n_0}\sum_{x > K} q_{n,x}^2 < \eps/6.$  By Lemma \ref{lem:key_ineq},  $\varphi_p$ 
is continuous on finite blocks, and therefore it is continuous on $\{0,\ldots, K\}.$  
Hence, there exists a $n_0'$ such that for all $n\geq n_0'$ 
\begin{eqnarray*}
\sum_{x=0}^K (\varphi_p(q_n)_x-\varphi_p(q)_x)^2 \leq \eps/3.
\end{eqnarray*}
Applying \eqref{line:key_ineq2}, we find that for all $n\geq \max\{n_0, n_0'\}$
\begin{eqnarray*}
||\varphi_p(q_n)-\varphi_p(q)||^2_2 
&\leq & \sum_{x=0}^K (\varphi_p(q_n)-\varphi_p(q))^2 
             + 2\sum_{x>K} \varphi_p(q_n)_x^2 + 2\sum_{x>K} \varphi_p(q)_x^2 \\
&\leq & \eps/3 + 2 \sum_{x > K} q_{n,x}^2 +2\sum_{x > K} q_x^2 < \eps,
\end{eqnarray*}
which shows that $\varphi_p$ is continuous on $\ell_2.$ 
Since  $Y_n\Rightarrow Y$ in $\ell_2$, it follows, by the continuous mapping theorem,
that $\varphi_p(Y_n) \Rightarrow \varphi_p(Y)$.
   However, both $Y_n^G$ and $Y_n^R$ are of the form
$\sqrt{n}(\varphi(\widehat p_n)-p)\neq \varphi_p(Y_n)$.
To complete the proof of the theorem it is enough to show that
\begin{eqnarray*}
\bb E_n = ||\sqrt{n}(\varphi(\widehat p_n)-p)- \varphi_p(Y_n)||^2_2,
\end{eqnarray*}
converges to zero in $L_1$; that is, we will show that $E[\bb E_n] \rightarrow 0$. 

By Skorokhod's theorem, there exists a probability triple and
random processes $Y$ and $Y_n=\sqrt{n}(\widehat p_{n}-p)$, 
such that $Y_n \rightarrow Y$ almost surely in $\ell_2$.
 Fix $\eps >0$  and find $K\in\mc T_p$ such that $\sum_{x>K} p_x < \eps/4.$

Next, let $\mc T_p^K = \{0\leq x \leq K: x \in \mc T_p\},$ and let
$\delta = \min_{x\in \mc T_p^K} (p_x-p_{x+1})$.  Then, there exists an $n_0$ such that for all $n \geq n_0$
\begin{eqnarray}
\sup_{x\geq 0}|\widehat p_{n,x}-p_x| &<& \delta/3,\label{line:tight1}\\
\sup_{x\geq 0}|\sum_{0\leq y\leq x}\varphi(\widehat p_{n})_y-F(x)| &<& \delta/6,\label{line:tight2}
\end{eqnarray}
almost surely (see Corollary \ref{cor:glivenko}).

Now, consider any $m\in \mc T_p^K.$  It follows that any such $m$ is also
a touchpoint of the operator $\varphi$ on $\widehat p_n$.
Here, by touchpoint we mean that
$\sum_{x=0}^m \varphi(\widehat p_n)_x=\sum_{x=0}^m \widehat p_{n,x}.$
From \eqref{line:tight1}, it follows that
\begin{eqnarray*}
\inf_{x\leq m} \widehat p_{n,x} > \sup_{x>m} \widehat p_{n,x},
\end{eqnarray*}
which implies that $m$ is a touchpoint for the rearrangement estimator.
For the Grenander estimator, we require \eqref{line:tight2}.   Here,
\begin{eqnarray*}
\widehat F_n^G(m) - \widehat F_n^G(m-1) &>& F(m)-F(m-1) - \delta/3 \\
&=& p_m - \delta/3\\
&>& p_{m+1} + \delta/3\\
&=& F(m+1)-F(m) + \delta/3\\
&\geq & \widehat F_n^G(m+1) - \widehat F_n^G(m).
\end{eqnarray*}
Therefore, the slope of $\widehat F_n^G$ changes from $m$ to $m+1$,
which implies that $m$ is a touchpoint almost surely.
Let $\widehat p_{n}^{(s,r)}=\{\widehat p_{n,s}, \widehat p_{n,s+1}, \ldots , \widehat p_{n,r}\}$.
An important property of the $\varphi$ operator is if $m < m'$ are two
touchpoints of $\varphi$ applied to $\widehat p_n,$ then for all $m+1 \leq x \leq m'$,
$\varphi(\widehat p_n)_x = \varphi(\widehat p_n^{(m+1, m')})_x.$
Now, since $p$ takes constant values between the touchpoints $\mc T_p^K$,
 it follows that $\sqrt{n}(\varphi(\widehat p_n)-p)_x= \varphi_p(Y_n)_x$,
 for all $x\leq K.$

Therefore,  for all $n\geq n_0$
\begin{eqnarray*}
\bb E_n
&=& \sum_{x\geq 0} \left| \sqrt{n}(\varphi(\widehat p_n)-p)_x- \varphi_p(Y_n)_x\right|^2\\
&\leq & \sum_{x=0}^{K} \left( \sqrt{n}(\varphi(\widehat p_n)-p)_x- \varphi_p(Y_n)_x\right)^2 \\
&& \qquad + \ 2 \sum_{x> K} \left( \sqrt{n}(\varphi(\widehat p_n)-p)_x\right)^2
                  + 2 \sum_{x>K} \left( \varphi_p(Y_n)_x\right)^2\\
&\leq &  4 \sum_{x > K} \left( Y_{n,x}\right)^2,
\end{eqnarray*}
almost surely.  It follows that
\begin{eqnarray*}
\limsupp \, \bb E_n \leq 4 \sum_{x>K} \left( Y_{x}\right)^2,
\end{eqnarray*}
and hence
\begin{eqnarray*}
E\left[\limsupp \,  \bb E_n\right]
&\leq& 4 E\left[ \sum_{x>K} \left( Y_{x}\right)^2\right]  = 4 \sum_{x>K} p_x(1-p_x) < \eps.
\end{eqnarray*}
Since $\bb E_n \leq 2||Y_n||_2^2$, with $E[||Y_n||_2^2]\leq 1$, we may apply Fatou's lemma so that
\begin{eqnarray*}
0 \leq \limsupp\, E[\bb E_n] \leq E\left[\limsupp\, \bb E_n\right] \leq \eps.
\end{eqnarray*}
Letting $\eps\rightarrow 0$ completes the proof.
\end{proof}

Corollaries \ref{cor:decrasing} and \ref{cor:uniformprocess}
are obvious consequences of Theorem \ref{thm:process}.
Remark \ref{rem:prob_equal} is proved in the following section.

\subsection{Limiting distributions for metrics: proofs}

\begin{proof}[Proof of Corollary \ref{cor:ell2}]
We provide the details only in the $k=2$ setting.
The cases when $k>2$ follow in a similar manner, since here $||x||_k\leq ||x||_2$ for $x\in \ell_2$.

Convergence of $||Y_n||_2, ||Y_n^R||_2$ and $||Y_n^G||_2$
follows from Theorems \ref{thm:l2Gaussian} and \ref{thm:process}
by the continuous mapping theorem.  That $||Y||_2=||Y^R||_2$
is obvious from the definition of $Y^R.$
That $||Y^G||_2\leq ||Y||_2$ follows from Jensen's inequality and
the definition of the $\gren(\cdot)$ operator, since for any $r < s$,
$\gren(Y^{(r,s)})_x$ is equal to the average of $Y_y$ over some
subset of $\{r, \ldots, s\}$ containing the point $x$.  If $p$ is not strictly
decreasing, then there exists a region, which we denote again by
$\{r, \ldots, s\}$, where it is constant.  Then there is positive
probability that $(Y^G)^{(r,s)}$ is different from $Y^{(r,s)}.$  In this case, we have that
\begin{eqnarray*}
||(Y^G)^{(r,s)}||_2^2 < ||Y^{(r,s)}||_2^2,
\end{eqnarray*}
which finishes the proof of the stochastic ordering in the third statement.
Convergence in expectation is immediate since
\begin{eqnarray*}
E[||Y_n||_2^2]&=& \sum_{x\geq 0} p_x(1-p_x),
\end{eqnarray*}
and the same results for $Y_n^R, Y_n^G$ follow by the dominated
convergence theorem and the bounds in Theorem \ref{thm:BasicInequalities} (i).
Lastly, the bound $E[||Y^G||_2^2] \leq E[||Y||_2^2 ]$ with equality if and only if
$p$ is strictly monotone follows from the stochastic ordering.
\end{proof}

\begin{proof}[Proof of Corollary \ref{cor:ell1}]
The result of the corollary for the empirical estimator is essentially the
Borisov-Durst theorem (see e.g. \cite{MR1720712}, Theorem 7.3.1, page 244), which states that
\begin{eqnarray*}
\sup_{C\in 2^\NN}\left|\sum_{x\in C}Y_{n,x}\right|\Rightarrow \sup_{C\in 2^\NN}\left|\sum_{x\in C} Y_x\right|
\end{eqnarray*}
if $\sum_x \sqrt{p_x}<\infty$.
To complete the argument note that
$\sup_{C\in 2^\NN}|\sum_{x\in C}w_x| =||w||_1/2$ for any sequence $w$ such that 
$\sum_x w_x =0$ (note that the condition $\sum_x \sqrt{p_x}<\infty$ 
means that the sequences $Y_n$ and $Y$ are absolutely summable almost surely).
However, the result may also be proved by noting that the sequence
$Y_n$ is tight in $\ell_1$ using Lemma \ref{lem:tight}, since
\begin{eqnarray*}
E[||Y_n||_1]&\leq & \sum_{x\geq 0}\sqrt{p_x(1-p_x)},\\
\sum_{x\geq m} E[|Y_{n,x}|]  &\leq &\sum_{x\geq m}\sqrt{p_x(1-p_x)} \rightarrow 0,
\end{eqnarray*}
as $m\rightarrow\infty$ under the assumption $\sum_{x\geq 0} \sqrt{p_x} <\infty.$
The proof that $Y_n^G\Rightarrow Y^G$ and $Y_n^R\Rightarrow Y^R$ in 
$\ell_1$ is identical to the proof of Theorem \ref{thm:process}, and we omit the details.   
Convergence of expectations follows since $||Y_n||_1$ is uniformly integrable, as
\begin{eqnarray*}
E[||Y_n||_1 \mathbb I_{\{||Y_n||_1 > \alpha\}}]
&\leq&\frac{E[||Y_n||_1^2]}{\alpha}
            = \frac{1}{\alpha} \sum_{x,z}E[|Y_{n,x}||Y_{n,z}|]\\
&\leq & \frac{1}{\alpha}\left(\sum_{x\geq 0}\sqrt{p_x}\right)^2,
\end{eqnarray*}
by the Cauchy-Schwarz inequality.  All other details follow as in the proof of Corollary~\ref{cor:ell2}.
\end{proof}

\begin{proof}[Proof of Corollary \ref{cor:hell}]
If $\kappa<\infty,$ then we have that
\begin{eqnarray*}
8nH^2(\widehat p_n, p)
&=& 4n\sum_{x=0}^\kappa [\sqrt{\widehat p_{n,x}}-\sqrt{p_x}]^2\\
&=& 4\sum_{x=0}^\kappa \frac{[\sqrt{n}(\widehat p_{n,x}-p_x)]^2}{(\sqrt{\widehat p_{n,x}}+\sqrt{p_x})^2},
\end{eqnarray*}
which converges to
\begin{eqnarray}
4\sum_{x=0}^\kappa \frac{Y_x^2}{(2\sqrt{p_x})^2}=\sum_{x=0}^\kappa \frac{Y_x^2}{p_x}
\end{eqnarray}
by Theorem \ref{thm:l2Gaussian} and Theorem \ref{thm:GlobalConsistencyTheorem} 
for $k=\infty$. That this has a chi-squared distribution with $\kappa$ degrees of freedom 
is standard, and is shown for example, in \cite{ferg96}, Theorem~9.  
Convergence of means follows by the dominated convergence theorem 
from the bound $H(p,q)\leq \sqrt{||p-q||_1}$ (see e.g. \cite{lecam:69}, page 35) 
and Corollary \ref{cor:ell1}.  All other details follow as in the proof of Corollary \ref{cor:ell2}.
\end{proof}

\begin{proof}[Proof of Remark \ref{rem:wrongspace}]
Suppose first that $\sum_{x\geq 0}\sqrt{p_x}=\infty.$  Define $P$ to be the 
probability measure $P(A)=\sum_{x\in A}p_x,$ and let $W$ be the mean 
zero Gaussian field on $\ell^2$ such that $E[W_{x}W_{x'}]= p_x \delta_{x,x'}.$  
Then we may write $Y=_d \{W_x - p_x W_{\NN}\}_{x\geq 0}$, where $W_\NN=\sum_{x\geq 0}W_x.$

Now, since $\sum_{x\geq 0}P(|W_x|\geq \sqrt{p_x})=\infty$, by the 
Borel-Cantelli lemma we have that $\sum_{x\geq 0}|W_x|=\infty$ almost surely.  Since
\begin{eqnarray*}
\sum_{x\geq 0} |Y_x|&= & \sum_{x\geq 0} |W_x-p_x W_\NN|\\
&\geq & \sum_{x\geq 0} |W_x| - |W_\NN|,
\end{eqnarray*}
and $W_\NN$ is finite almost surely, it follows that $\sum_{x\geq 0} |Y_x|=\infty$ 
almost surely as well.  That is, if  $\sum_{x\geq 0}\sqrt{p_x}=\infty$, then the random variable 
$||Y||_1$ simply does not exist.  

A similar argument works for the Hellinger norm.  Assume that $\kappa=\infty.$  Then
\begin{eqnarray*}
\sum_{x\geq 0} \frac{Y_x^2}{p_x}&= &\left(\sum_{x\geq 0} \frac{W_x^2}{p_x}\right) - W_\NN^2,
\end{eqnarray*}
and the Borel-Cantelli lemma shows that $\sum_{x\geq 0} W_x^2/p_x$ is infinite almost surely.
\end{proof}

\begin{lem}\label{lem:l2_eq_touch}
Let $Z_1, \ldots, Z_k$ be i.i.d. N(0,1) random variables, and let $Z_i^G, i=1, \ldots, k$  
denote the left slopes of the least concave majorant of the graph of the cumulative sums 
$\sum_{i=1}^j Z_j$ with $j=0, \ldots, k.$  
Let $T$ denote the number of times that the LCM touches the cumulative sums 
(excluding the point zero, but including the point $k$).  Then
\begin{eqnarray*}
E\left[\sum_{i=1}^k (Z_i^G)^2\right]=E[T].
\end{eqnarray*}
\end{lem}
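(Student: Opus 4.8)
The plan is to recognize $Z^G = (Z_1^G, \ldots, Z_k^G)$ as the Euclidean projection of the vector $Z = (Z_1, \ldots, Z_k)$ onto the closed convex cone of decreasing sequences $\mc C = \{ v \in \RR^k : v_1 \geq v_2 \geq \cdots \geq v_k \}$. Indeed, the left slopes of the least concave majorant of the partial sums are exactly the monotone decreasing (isotonic) regression of $Z$, i.e.\ the $\ell_2$ projection $Z^G = \Pi_{\mc C}(Z)$; this is the identification underlying the definition of $\gren$ and is standard (see \cite{MR961262}). Writing $\Pi = \Pi_{\mc C}$, I would establish the identity in two independent steps: first the pointwise equality $\langle Z, \Pi(Z)\rangle = \|\Pi(Z)\|_2^2$, and second $E[\langle Z, \Pi(Z)\rangle] = E[T]$ via Gaussian integration by parts and a divergence computation.

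For the first step I would use the obtuse-angle characterization of projection onto a convex set: for every $v \in \mc C$ we have $\langle Z - \Pi(Z),\, v - \Pi(Z)\rangle \leq 0$. Since $\mc C$ is a cone, both $v = 0$ and $v = 2\Pi(Z)$ belong to $\mc C$, and substituting these two choices forces $\langle Z - \Pi(Z),\, \Pi(Z)\rangle = 0$. Hence $\langle Z, \Pi(Z)\rangle = \|\Pi(Z)\|_2^2 = \sum_{i=1}^k (Z_i^G)^2$.

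For the second step I would invoke Stein's identity: because $Z \sim N(0, I_k)$ and the projection $\Pi$ onto a convex set is $1$-Lipschitz (hence weakly differentiable and absolutely continuous along almost every coordinate line), one has $E[\langle Z, \Pi(Z)\rangle] = E[\mathrm{div}\,\Pi(Z)]$, where $\mathrm{div}\,\Pi = \sum_i \partial \Pi_i/\partial z_i$. It then remains to show $\mathrm{div}\,\Pi(Z) = T$ almost everywhere. The cone $\mc C$ is polyhedral, so $\RR^k$ splits into finitely many regions on whose interiors the touchpoint pattern (equivalently the block structure of the pooled coordinates) is constant and the inequalities between consecutive blocks are strict; off the union of the region boundaries, a Lebesgue-null set, $\Pi$ restricted to such a region coincides with the orthogonal projection onto the subspace of vectors that are constant on each of the $T$ blocks. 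That subspace has dimension $T$, so the divergence of $\Pi$ there equals the trace of this projection, namely $T$. Combining the two steps yields $E\!\left[\sum_{i=1}^k (Z_i^G)^2\right] = E[\langle Z, \Pi(Z)\rangle] = E[T]$.

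The step requiring the most care is the rigorous handling of the non-smooth map $\Pi$: one must verify that the exceptional set (where the block partition changes, or where pooled coordinates meet across a block boundary) is Lebesgue-null so that ties among the $Z_i$ are irrelevant, that $\Pi$ is absolutely continuous on almost every axis-parallel line so that the integration by parts is legitimate, and that on each polyhedral cell $\Pi$ genuinely reduces to the block-averaging linear map of rank $T$. A minor additional check is the endpoint convention, namely that $j=k$ is always a touchpoint so that $T \geq 1$, consistent with $\mathrm{div}\,\Pi \geq 1$.
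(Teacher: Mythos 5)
Your proof is correct, but it takes a genuinely different route from the paper. You work through convex analysis and Stein's lemma: identify $Z^G$ as the $\ell_2$ projection $\Pi(Z)$ of $Z$ onto the polyhedral cone of decreasing vectors, use the cone-projection orthogonality ($v=0$ and $v=2\Pi(Z)$ in the obtuse-angle inequality) to get the pointwise identity $\sum_i (Z_i^G)^2=\langle Z,\Pi(Z)\rangle$, and then apply Gaussian integration by parts together with the fact that, off a Lebesgue-null set of cell boundaries, $\Pi$ is locally the orthogonal projection onto the $T$-dimensional subspace of block-constant vectors, so $\mathrm{div}\,\Pi=T$ a.e. This is the "degrees of freedom of isotonic regression" argument (in the spirit of Meyer and Woodroofe), and all the steps you flag as delicate — the 1-Lipschitz property of $\Pi$ justifying Stein's identity, the null set of ties, the local linearity on each polyhedral cell — do go through. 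The paper instead proceeds combinatorially: it conditions on the block partition induced by the LCM, permutes blocks using the exchangeability of the $Z_i$, and exploits the independence of each block average from the within-block centered variables to factor the expectations, arriving at $E[\sum_i (Z_i^G)^2\,1_{\{T=m\}}]=m\,P(T=m)$ and summing over $m$. Your argument is shorter and generalizes readily (e.g.\ to other polyhedral cones, where the divergence equals the dimension of the face containing the projection), at the cost of invoking weak differentiability and Stein calculus; the paper's argument is entirely elementary and yields the finer conditional statement $E[\sum_i (Z_i^G)^2\mid T=m]=m$, which the expectation-level Stein identity does not immediately provide, and its permutation machinery dovetails with the Sparre Andersen result used in the proof of Proposition \ref{prop:means} to compute $E[T]=\sum_{i=1}^k 1/i$.
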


\begin{proof}
It is instructive to first consider some of the simple cases.  When $k=1,$ the result is obvious.   
Suppose then that $k=2.$  We have\\

\begin{center}
\begin{tabular}{ccc}
\toprule[1.5pt]
T & $\sum_{i=1}^k(Z_i^G)^2$ & if\\
\midrule
2 & $Z_1^2 + Z_2^2$ & $Z_1 > Z_2$\\
1 &  $\left(\frac{Z_1+Z_2}{\sqrt{2}}\right)^2$ & $Z_1 < \frac{Z_1+Z_2}{2}$\\
\bottomrule[1.5pt]\\
\end{tabular}
\end{center}
Note that we ignore all equalities, since these occur with probability zero.  It follows that
\begin{eqnarray*}
E\left[\sum_{i=1}^2(Z_i^G)^2\right]&=& E[(Z_1^2 + Z_2^2)1_{Z_1 > Z_2}]
     +E\left[\left(\frac{Z_1+Z_2}{\sqrt{2}}\right)^21_{Z_1< \frac{Z_1+Z_2}{2}}\right]
\end{eqnarray*}
where, by exchangeability it follows that
\begin{eqnarray*}
E[(Z_1^2 + Z_2^2)1_{Z_1 > Z_2}]&=& E[(Z_1^2 + Z_2^2)1_{Z_1 < Z_2}]\\
&=& E[(Z_1^2 + Z_2^2)]P(Z_1 > Z_2)\\
&=& 2 P(T=2).
\end{eqnarray*}
On the other hand, we also have that
\begin{eqnarray*}
E\left[\left(\frac{Z_1+Z_2}{\sqrt{2}}\right)^21_{Z_1< \frac{Z_1+Z_2}{2}}\right]
&=&E\left[\left(\frac{Z_1+Z_2}{\sqrt{2}}\right)^2\right] P\left(Z_1< \frac{Z_1+Z_2}{2}\right)\\ 
&=&1P(T=1),
\end{eqnarray*}
since the random variables $\bar Z=(Z_1+Z_2)/2$ and $Z_1-\bar Z$ are independent.  
The result follows.

Next, suppose that $k=3$.  Then we have the following.
\begin{center}
\begin{tabular}{cccc}
\toprule[1.5pt]
T & $\sum_{i=1}^k(Z_i^G)^2$ & \multicolumn{2}{c}{if}\\
\cmidrule(l){3-4}
& & (a) & (b)\\
\midrule
3 & $Z_1^2 + Z_2^2+Z_3^2$ & $Z_1> Z_2 > Z_3$ &\\
2 & $\left(\frac{Z_1+Z_2}{\sqrt{2}}\right)^2+Z_3^2$ & $\frac{Z_1+Z_2}{2}> Z_3$ &$\frac{Z_1+Z_2}{2} >Z_1$ \\
2 & $Z_1^2+\left(\frac{Z_2+Z_3}{\sqrt{2}}\right)^2$ & $Z_1>\frac{Z_2+Z_3}{2}$ & $\frac{Z_2+Z_3}{2} >Z_2$\\
1 &  $\left(\frac{Z_1+Z_2+Z_3}{\sqrt{3}}\right)^2$ & & $\frac{Z_1+Z_2+Z_3}{3}> Z_1, \frac{Z_1+Z_2}{2}$\\
\bottomrule[1.5pt]\\
\end{tabular}
\end{center}
The choice of splitting the conditions between columns (a) and (b) is key to 
our argument.  Note that the LCM creates a partition of the space $\{1, \ldots, k\}$, 
where within each subset the slope of the LCM is constant.  The number of partitions 
is equal to $T$.  Here, column (a) describes the necessary conditions on the order 
of the slopes on the partitions, while column (b) describes the necessary conditions 
that must hold within each partition.

In the first row of the table, we find by permuting across all orderings of (123) that
\begin{eqnarray*}
E[(Z_1^2 + Z_2^2+Z_3^2)\, 1_{Z_1> Z_2 > Z_3}] &= &E[(Z_1^2 + Z_2^2+Z_3^2)]P(Z_1> Z_2 > Z_3) \\
&=& 3 P(T=3).
\end{eqnarray*}
Next consider $T=2.$  Here, by permuting  $(123)$ to $(312)$, we find that
\begin{eqnarray*}
&& \hspace{-3cm}E\left[\left\{Z_1^2+\left(\frac{Z_2+Z_3}{\sqrt{2}}\right)^2\right\} 
         1_{Z_1>\frac{Z_2+Z_3}{2}}1_{\frac{Z_2+Z_3}{2} >Z_2}\right]\\
&=& E\left[\left\{\left(\frac{Z_1+Z_2}{\sqrt{2}}\right)^2+Z_3^2\right\} 
         1_{Z_3>\frac{Z_1+Z_2}{2}}1_{\frac{Z_1+Z_2}{2} >Z_1}\right].
\end{eqnarray*}
Note that the permutation $(123)$ to $(312)$ may be re-written as $(\{12\}\{3\})$ 
to $(\{3\}\{12\})$ which is really a permutation on the partitions formed by the LCM. Now,
\begin{eqnarray*}
E[T 1_{T=2}]
&=&   E\left[\left\{\left(\frac{Z_1+Z_2}{\sqrt{2}}\right)^2
            +Z_3^2\right\} 1_{\frac{Z_1+Z_2}{2}>Z_3}1_{\frac{Z_1+Z_2}{2} >Z_1}\right]\\\\
&& \ \ + \ \ E\left[\left\{Z_1^2+\left(\frac{Z_2+Z_3}{\sqrt{2}}\right)^2\right\} 
         1_{Z_1>\frac{Z_2+Z_3}{2}}1_{\frac{Z_2+Z_3}{2} >Z_2}\right]\\
&=& E\left[\left\{\left(\frac{Z_1+Z_2}{\sqrt{2}}\right)^2+Z_3^2\right\} 1_{\frac{Z_1+Z_2}{2} >Z_1}\right]\\
&=& E\left[\left\{\left(\frac{Z_1+Z_2}{\sqrt{2}}\right)^2+Z_3^2\right\}\right] P\left(\frac{Z_1+Z_2}{2} >Z_1\right)\\
&=& 2P(T=2),
\end{eqnarray*}
where in the penultimate line we use the fact that $Z_3, (Z_1+Z_2)/2$ and $Z_1-(Z_1+Z_2)/2$ are independent.

Lastly,
\begin{eqnarray*}
&&\hspace{-3cm}E\left[\left(\frac{Z_1+Z_2+Z_3}{\sqrt{3}}\right)^2 1_{\frac{Z_1+Z_2+Z_3}{3}>Z_1} 
        1_{\frac{Z_1+Z_2+Z_3}{3}>\frac{Z_1+Z_2}{2}}\right]\\
&=& E\left[\left(\frac{Z_1+Z_2+Z_3}{\sqrt{3}}\right)^2 1_{\frac{Z_1+Z_2+Z_3}{3}>Z_1} 
        1_{Z_3>\frac{Z_1+Z_2+Z_3}{3}}\right]\\
&=& E\left[\left(\frac{Z_1+Z_2+Z_3}{\sqrt{3}}\right)^2\right]E\left[ 1_{\frac{Z_1+Z_2+Z_3}{3}>Z_1} 
        1_{\frac{Z_1+Z_2+Z_3}{3}>\frac{Z_1+Z_2}{2}}\right]\\
&=& 1 P(T=1)
\end{eqnarray*}
as the variables $\overline{Z}=(Z_1+Z_2+Z_3)/3$ and 
$\{Z_1-\overline Z, Z_2-\overline Z, Z_3-\overline{Z}\}$ are independent.

The key to the general proof is the combination of two actions:
\begin{enumerate}
\item Permutations of subgroups (column (a)), and
\item independence of column (b) from the random variables $\sum_{i=1}^k(Z_i^G)^2$ 
and the indicator functions in column (a). Note that for any $k>j\geq 1$, 
letting $\bar Z = (Z_1+ Z_2 + \ldots + Z_k)/k$
\begin{eqnarray*}
\frac{Z_1+ Z_2 + \ldots + Z_j }{j}-\bar Z &=& \frac{(Z_1-\bar Z)+ (Z_2-\bar Z) + \ldots + (Z_j-\bar Z) }{j},
\end{eqnarray*}
which is independent of $\bar Z$ for any choice of $j<k.$
\end{enumerate}
To write down the proof for any $k$ we must first introduce some notation.
\begin{itemize}
\item For any $1\leq m\leq k$, we may create a collection $\mc P$ of partitions of 
$\{1, \ldots, k\}$ such that the total number of elements in each partition is $m$.  
For example, when $k=4$ and $m=2$, then the elements of $\mc P$ are the 
partitions $(\{1\}\{234\}),(\{12\}\{34\})$ and $(\{123\}\{4\})$.  Furthermore, for each partition, 
we may write down the number of elements in each subset of the partition.    
Here the sizes of the partitions are $1,3$ then $2,2$ and $3,1$.   These partitions may 
be grouped further by placing together all partitions such that their sizes are unique up to order.  
Thus, in the above example we would put together $1,3$ and $3,1$ as one group, and the 
second group would be made up of $2,2.$  From each subgroup we wish to choose a 
representative member, and the collection of these representatives will be denoted as $\tau(m).$  
We assume that the representative $\tau$ is chosen in such a way that the sizes of the 
partitions are given in increasing order.  
Let $r_1$ denote the number of subgroups with size 1, and so on.  
Thus, for $\tau=(\{1\}\{234\})$, we have $r_1=1, r_2=0, r_3=1, \ldots, r_k=0.$
\item Next, from $\tau(m)$ we wish to re--create the entire collection $\mc P$.  
To do this, it is sufficient to take each $\tau$ and re--create all of the partitions 
which had the same sizes.   Let $\sigma_m\tau$ denote the resulting collection for 
a fixed partition $\tau$.   Thus, $\mc P$ is equal to the union of $\sigma_m\tau$ over all $\tau \in \tau(m).$  
Note that the number of elements in $\sigma_m\tau$ is given by
\begin{eqnarray*}
{m \choose r_1\, r_2 \,\ldots \,r_k }.
\end{eqnarray*}
We also use the notation $R_j=\sum_{i=1}^j  r_i,$ with $R_0=0.$  Note that $R_k=m.$
\item For each partition $\sigma$, we write $\sigma_1, \ldots, \sigma_m$ to 
denote the individual subsets of the partition.  Thus, for $\sigma=(\{1\}\{234\}),$ 
we would have $\sigma_1=\{1\}$ and $\sigma_2 =\{2,3,4\}.$
\item For each $\sigma_j$ as defined above, we let
\begin{eqnarray*}
&AV_{\sigma_j} Z = \left(\sum_{i\in\sigma_j}Z_i\right)/|\sigma_j|, \mbox{ and } 
AV_{\sigma_j}^{-l} Z = \left(\sum_{i\in\sigma_j^{(l)}}Z_i\right)/|\sigma_j^{(l)}|,&
\end{eqnarray*}
where $\sigma_j^{(l)}$ denotes $\sigma_j$ with its \emph{last} $l$ elements removed.
\end{itemize}

We are now ready to calculate $E[\sum_{i=1}^k(Z_i^G)^2 1_{T=m}] $.  
By considering all possible partitions, this is equal to the sum over all $\tau\in \tau(m)$  of the following terms
\begin{eqnarray*}
\sum_{\sigma \in \sigma_m \tau}E\left[\left\{\sum_{j=1}^m |\sigma_j|\left(AV_{\sigma_j} Z \right)^2\right\} 
         1_{AV_{\sigma_1} Z > \ldots > AV_{\sigma _m} Z} 
         \prod_{j=1}^m 1_{AV_{\sigma_j} Z > \max \{AV_{\sigma_j}^{-1} Z, \ldots, AV_{\sigma_j}^{-(|\sigma_j|-1)} Z\}}\right].
\end{eqnarray*}
By permuting each $\sigma\in \sigma_m\tau$, and appealing to the 
exchangeability of the $Z_i$'s, this is equal to
\begin{eqnarray*}
&&\hspace{-1.5cm}E\left[\left\{\sum_{j=1}^m |\sigma_j|\left(AV_{\sigma_j} Z \right)^2\right\}
        \left\{ \prod_{i=1}^k 1_{AV_{\sigma_{R_{i-1}+1}} Z > \ldots > AV_{\sigma _{R_i}} Z} \right\}\right.\\
&&\left.\times\left\{\prod_{j=1}^m 
        1_{AV_{\sigma_j} Z > \max \{AV_{\sigma_j}^{-1} Z, \ldots, AV_{\sigma_j}^{-(|\sigma_j|-1)} Z\}}\right\}\right]\\
&=& E\left[\left\{\sum_{j=1}^m |\sigma_j|\left(AV_{\sigma_j} Z \right)^2\right\}
         \left\{ \prod_{i=1}^k 1_{AV_{\sigma_{R_{i-1}+1}} Z > \ldots > AV_{\sigma _{R_i}} Z} \right\}\right]\\
&&\hspace{1.5cm}\times E\left[\left\{
       \prod_{j=1}^m 1_{AV_{\sigma_j} Z > \max \{AV_{\sigma_j}^{-1} Z, \ldots, 
               AV_{\sigma_j}^{-(|\sigma_j|-1)} Z\}}\right\}\right],
\end{eqnarray*}
by independence of each $AV_{\sigma_j} Z$ and each $Z_i-AV_{\sigma_j} Z$ 
for $i\in \sigma_j.$  Notice that the permutations of $\sigma\in\sigma_m\tau$ 
do not account for permutations across all groups with equal ``size".    
By considering furthermore all permutations between groups of equal size, 
we further obtain that the last display above is equal to
\begin{eqnarray*}
&&E\left[\left\{\sum_{j=1}^m |\sigma_j|\left(AV_{\sigma_j} Z \right)^2\right\}\right]
      E\left[\left\{ \prod_{i=1}^k 1_{AV_{\sigma_{R_{i-1}+1}} Z > \ldots > AV_{\sigma _{R_i}} Z} \right\}\right]\\
&&\hspace{1.5cm}\times 
      E\left[\left\{\prod_{j=1}^m 1_{AV_{\sigma_j} Z > \max \{AV_{\sigma_j}^{-1} Z, \ldots, AV_{\sigma_j}^{-(|\sigma_j|-1)} Z\}}\right\}\right]\\
&=& m \  E\left[\left\{ \prod_{i=1}^k 1_{AV_{\sigma_{R_{i-1}+1}} Z > \ldots > AV_{\sigma _{R_i}} Z} \right\}\right] 
        E\left[\left\{\prod_{j=1}^m 1_{AV_{\sigma_j} Z > \max \{AV_{\sigma_j}^{-1} Z, \ldots, AV_{\sigma_j}^{-(|\sigma_j|-1)} Z\}}\right\}\right].
\end{eqnarray*}

Lastly, we collect terms to find that $E[\sum_{i=1}^k(Z_i^G)^2 1_{T=m}]$ is equal to $m$ times
\begin{eqnarray*}
&&\hspace{-1cm}\sum_{\tau\in \tau(m)} E\left[\left\{ \prod_{i=1}^k 
       1_{AV_{\sigma_{R_{i-1}+1}} Z > \ldots > AV_{\sigma _{R_i}} Z} \right\}\right] 
         E\left[\left\{\prod_{j=1}^m 
        1_{AV_{\sigma_j} Z > \max \{AV_{\sigma_j}^{-1} Z, \ldots, AV_{\sigma_j}^{-(|\sigma_j|-1)} Z\}}\right\}\right]\\
&=& \sum_{\tau\in \tau(m)}  \sum_{\sigma\in\sigma_m\tau} 
         E\left[ 1_{AV_{\sigma_1} Z > \ldots > AV_{\sigma _m} Z} 
         \left\{\prod_{j=1}^m 
         1_{AV_{\sigma_j} Z > \max \{AV_{\sigma_j}^{-1} Z, \ldots, AV_{\sigma_j}^{-(|\sigma_j|-1)} Z\}}\right\}\right]\\
&=& P(T=m),
\end{eqnarray*}
which concludes the proof.
\end{proof}

\begin{proof}[Proof of Proposition \ref{prop:means}]
In light of Proposition \ref{prop:carolandykstra} and the definition of $Y^G$ 
(along with some simple calculations), it is sufficient to prove that
\begin{eqnarray}\label{line:gren_toprove}
(s-r+1)E\left[\sum_{x=r}^s \gren(\widetilde Y^{(r,s)})_x^2\right]&=& \sum_{i=1}^{s-r}\frac{1}{i+1},
\end{eqnarray}
using the notation of the Proposition \ref{prop:carolandykstra}.  
Without loss of generality we may assume that $r=0$, and for simplicity 
we write $\widetilde Y$ for $\widetilde Y^{(r,s)}$.

Let $k=s+1$, and let $Z_1, \ldots, Z_k$ denote $k$ i.i.d. N(0,1) random variables, l
et $\bar Z$ denote their average, and let $\widetilde Z_i = Z_i-\bar Z$ (which is independent of $\bar Z$).  
We then have that
\begin{eqnarray*}
E\left[\sum_{x=1}^k \gren(Z)_x^2\right]&=& E\left[\sum_{x=1}^k \gren(\widetilde Z+\bar Z)_x^2\right]\\
&=&  E\left[\sum_{x=1}^k \left\{\gren(\widetilde Z)_x+\bar Z\right\}^2\right]\\
&=&E\left[\sum_{x=1}^k \left\{\gren(\widetilde Z)_x\right\}^2 + \sum_{x=1}^k \bar Z^2\right]\\
&=&E\left[\sum_{x=1}^k \left\{\gren(\widetilde Z)_x\right\}^2 \right] +1\\
&=&(y+1)E\left[\sum_{x=0}^y \left\{\gren(\widetilde Y)_x\right\}^2 \right] +1
\end{eqnarray*}
Therefore, by Lemma \ref{lem:l2_eq_touch}, to prove \eqref{line:gren_toprove},  it is sufficient to show that
\begin{eqnarray*}
E\left[\sum_{x=1}^k \gren(Z)_x^2\right]=E[T]=\sum_{i=1}^k \frac{1}{i},
\end{eqnarray*}
where $T$ denotes the number of touchpoints of the LCM with the cumulative sums of the $Z_i's.$

To do this, we use the results of \cite{MR0068154}.  He considers exchangeable 
random variables $X_1, X_2, \ldots$ and their partial sums $S_0=0, S_1, S_2, \ldots, S_n=\sum_{i=1}^n X_i$, 
and shows that the number $H_n$ of values $i\in\{1, \ldots, n-1\}$ for which $S_i$ coincides with the 
least concave majorant (equivalently the greatest convex minorant) of the sequence $S_0, \ldots, S_n$ has mean given by
\begin{eqnarray*}
E[H_n]&=& \sum_{i=1}^n \frac{1}{i+1},
\end{eqnarray*}
as long as the random variables $X_1, \ldots, X_n$ are symmetrically dependent and
\begin{eqnarray*}\label{line:SAcond}
P(S_i/i = S_j/j)=0, \ \ 1\leq i < j \leq n.
\end{eqnarray*}
The vector $X_1, \ldots, X_n$ is symmetrically dependent if its joint cumulative 
distribution function $P(X_i\leq x_i, i=1, \ldots, n)$ is a symmetric function of $x_1, \ldots, x_n.$
This result is Theorem 5 in \cite{MR0068154}.  Clearly, we have that 
$E[T-1]=E[H_{k}]$, for $X_1=Z_1, \ldots, X_n=Z_{k},$ which are exchangeable, 
and satisfy the required conditions.  The result follows.
\end{proof}

\begin{proof}[Proof of Remark \ref{rem:prob_equal}]
To prove this result we continue with the notation of the previous proof.  
Equality of $\gren(Y)$ with $Y$ holds if and only if the above partition 
$T=\{0, \ldots, y\}$.  By Theorem 5 of \cite{MR0068154}, this occurs with probability $1/(y+1)!.$
\end{proof}

\begin{proof}[Proof of Remark \ref{rem:pointwise_ineq}]
By Proposition \ref{prop:carolandykstra}  (and using the notation defined there), it is enough to prove that 
\begin{eqnarray*}
E[\gren(\widetilde Y)_x^2] &\leq&  \frac{1}{\tau} - \frac{1}{\tau^2},
\end{eqnarray*}
where for simplicity we write $\widetilde Y=\widetilde Y^{(s,r)}$.  
Let $\{\widetilde W_x\}_{x=r}^s$ be i.i.d. normal random variables with  
mean zero and variance $1/\tau,$ and let $\overline W = (\sum_{x=r}^s \widetilde W_x )/\tau$.  
Then $\widetilde Y \stackrel{d}{=} \widetilde Z - \overline Z,$ and also 
$\gren(\widetilde Z)_x =\gren(\widetilde Z - \overline Z)_x + \overline Z$.  
Notice also that $\widetilde Z-\overline Z$ and $\overline Z$ are independent.  
We therefore find that 
\begin{eqnarray*}
E[\gren(\widetilde Y)_x^2]+ 1/\tau^2&=& E[\gren(\widetilde Z)_x^2]\\
&\leq & E[\widetilde Z_x^2] = 1/\tau,
\end{eqnarray*}
the latter inequality following directly from Theorem 1.6.2 of \cite{MR961262}, 
since the elements of $\widetilde Z$ are independent.  
\end{proof}

\subsection{Estimating the mixing distribution: proofs}

\begin{proof}[Proof of Theorem \ref{thm:mixing_consistent}]
Since $||\tilde q_n-q||_k \leq ||\tilde q_n-q||_1$ and
$H(\tilde q_n,q) \leq \sqrt{||\tilde q_n-q||_1}$, it is sufficient to only
consider convergence in the $\ell_1$ norm.  Note that
\begin{eqnarray*}
|\tilde q_{n,x}-q_x|&\leq & (x+1) \left\{|\tilde p_{n,x+1}-p_{x+1}|+|\tilde p_{n,x}-p_{x}|\right\},
\end{eqnarray*}
and therefore we may further reduce the problem to showing that
$\sum_{x\geq 0} x |\tilde p_{n,x}-p_x|$ converges to zero.

For $\tilde p_n= \widehat p_{n,x}$, we have that for any large $K$
\begin{eqnarray*}
\sum_{x\geq 0} x |\widehat p_{n,x}-p_{x}|
&\leq & K \sup_{x < K} |\widehat{p}_{n,x} -p_x |
             + \sum_{x\geq K} x p_x + \sum_{x\geq K}  x \widehat p_{n,x},
\end{eqnarray*}
and since $E_p[X]$ exists by assumption, it follows from the law of large numbers that for any~$K$,
\begin{eqnarray*}
\sum_{x\geq K}  x \widehat p_{n,x} \rightarrow   \sum_{x\geq K} x \,p_x,
\end{eqnarray*}
almost surely. 
The proof now proceeds as in the proof of Theorem \ref{thm:GlobalConsistencyTheorem}.

For the rearrangement estimator and the MLE, we may use the same approach.
The key is to note that
$\sum_{x\geq K}x \tilde p_{n,x} \leq \sum_{x\geq K}x \widehat p_{n,x}$,
for any $K$ and for both $\tilde p_n = \widehat p_n^R, \widehat p_n^G$.  
This holds since $f_x = \bb I_{x\geq K} x$ is an increasing function and therefore 
\eqref{line:key_ineq1} of Lemma \ref{lem:key_ineq} applies.  
\end{proof}

\begin{proof}[Proof of Theorem \ref{thm:mixing_rate}]
Since $\kappa<\infty$ by assumption, the theorem follows directly from the results of Sections \ref{sec:LimitDistributions} and \ref{sec:limitdistributions_metrics}, as well as Theorem \ref{thm:mixing_consistent}.
\end{proof}

\bibliographystyle{ims}
\bibliography{DiscreteShape}

\end{document}